\documentclass[11pt,oneside]{amsart}
\usepackage{soul}
\usepackage[normalem]{ulem}
\linespread{1}
\usepackage{amssymb,amsmath,amsbsy,mathrsfs,eucal,amsfonts,eucal,amscd,exscale}
\usepackage{graphics}
\usepackage{verbatim}
\usepackage{appendix}
\usepackage{rotating}
\usepackage{enumerate}
\usepackage{amsthm}
\usepackage[]{algorithm2e}
\usepackage{xspace}
\usepackage{comment}
\usepackage{tikz,tikz-3dplot}
\usepackage{subfig}


\usepackage{color}
\definecolor{red}{rgb}{1,0,0}
\newcommand\red[1]{\textcolor{red}{#1}}
\definecolor{blue}{rgb}{0,0,1}

\usepackage{tikz}
\usetikzlibrary{arrows}


\numberwithin{equation}{section}
\numberwithin{table}{section}
\numberwithin{figure}{section}


\newcommand{\N}{\mathcal{N}}
\newcommand{\MA}{Monge-Amp\`ere }
\newcommand{\bld}[1]{\boldsymbol{#1}}
\newcommand{\vf}[1]{\hbox{\boldmath$#1$}}
\newcommand{\abs}[1]{| #1 |}

\newcommand{\dist}{\textrm{dist}}

\newcommand{\dm}{\Omega}

\newcommand{\bdry}{\partial \Omega}

%

\newcommand{\T}{{\mathcal{T}}}
\newcommand{\Th}{{\mathcal{T}_{h}}}

\newcommand{\Tk}{{\mathcal{T}_{k}}}

\newcommand{\Nh}{{\mathcal{N}_{h}}}

\newcommand{\Nk}{{\mathcal{N}_{k}}}
\newcommand{\NR}{{\mathcal{N}_{h/2}}}
\newcommand{\Nhi}{{\mathcal{N}_{h}^0}}
\newcommand{\Nhb}{{\mathcal{N}_{h}^{\partial}}}

\newcommand{\hull}{\textrm{hull}}
%
\newcommand{\jump}[1]{\,[\![#1]\!]}

\newcommand{\gradv}{\bld{\nabla}}

%
\newcommand{\norm}[1]{\|\,#1\,\|}
\newcommand{\inftynorm}[1]{\|\,#1\,\|_{L^{\infty}(\dm_h)}}

%
%

%
%

\newcommand{\stkout}[1]{\red{\ifmmode\text{\sout{\ensuremath{#1}}}\else\sout{#1}\fi}}

\newtheorem*{condition*}{Condition}

\newtheorem{theorem}{Theorem}[section]
\newtheorem{lemma}{Lemma}[section]
\newtheorem{prop}[lemma]{Proposition}
\newtheorem{corollary}[lemma]{Corollary}
\newtheorem{definition}{Definition}[section]
\newtheorem{example}{Example}[section]
\theoremstyle{remark}
\newtheorem{remark}{Remark}[section]

\title{ Pointwise Rates of Convergence for the Oliker-Prussner Method
for the Monge-Amp\`ere Equation }

\author{Ricardo H. Nochetto}
\address{Department of Mathematics and Institute for Physical
  Science and Technology, University of Maryland, College Park}
\curraddr{}
\email{rhn@math.umd.edu}
\thanks{Both authors were partially supported by NSF Grants
  DMS-1109325 and DMS-1411808.}

\author{Wujun Zhang}
\address{    Department of Mathematics, Rutgers University}
\curraddr{}
\email{wujun@math.rutgers.edu}
\thanks{The second author was also partially supported by the Brin
  Postdoctoral Fellowship of the University of Maryland
  and the start up fund of Rutgers University.}

\begin{document}
\maketitle
\keywords

\begin{abstract}
We study the Oliker-Prussner method exploiting its geometric nature.
We derive discrete stability and continuous dependence estimates in
the max-norm by using a discrete Alexandroff estimate and the
Brunn-Minkowski inequality. We show that the method is exact for all convex
quadratic polynomials provided the underlying set of nodes is
translation invariant within the domain; nodes still conform to the
domain boundary. This gives a suitable notion of operator consistency which,
combined with stability, leads to pointwise
rates of convergence for classical and non-classical
solutions of the \MA equation.
\end{abstract}
\pagestyle{myheadings}
\thispagestyle{plain}
\markboth{R.H. Nochetto and W. Zhang}{Rates of convergence for the \MA equation}

%
\section{Introduction}\label{S:introduction}
%
We consider the fully nonlinear \MA equation  
\begin{subequations}
\label{pde}
\begin{align}\label{pde_a}
\det D^2 u & = f  \qquad \text{in $\Omega$}
\\
\label{pde_b}
u & = g  \qquad\text{on $\partial \Omega$},
\end{align}
\end{subequations}
where $\Omega$ denotes a {\em uniformly convex} domain in
$\mathbb{R}^d$ ($d \ge 2$), $f \in C(\overline{\dm})$
satisfies $ 0 < \lambda_F \leq f(x) \leq \Lambda_F$ for all $x\in\Omega$,
and $g \in C(\partial{\dm})$.
Such an equation arises in differential geometry, optimal mass
transport and several fields of science and engineering, and
has received considerable attention in recent years.

In contrast to an extensive PDE literature \cite{Gutierrez01,
CaffarelliCabre95,CrandallIshiiLions92}, the numerical approximation
is still under development.
Convergence to the {\it viscosity solution} (see Definition
\ref{D:viscositysolution}), in the general framework of fully nonlinear
elliptic PDEs, is studied in the early works \cite{BarlesSouganidis91,
  KuoTrudinger90, KuoTrudinger92, KuoTrudinger00} and hinges on operator stability, consistency and monotonicity
properties. A. Oberman et al. designed
several finite difference methods (wide stencil schemes) within this framework
\cite{Oberman08, FroeseOberman11SIAM, FroeseOberman11JCP, BenamouFroeseOberman10}.
Benamou et. al. proposed recently a convergent finite difference
method, which reduces the stencil size,
and showed that the method is consistent for convex
polynomials \cite{BenamouCollinoMirebeau16}.

A geometric notion of generalized solution for the \MA equation is the so
called {\it Alexandroff solution} (see Definition \ref{D:Alexandroff}).
It hinges on the geometric interpretation of $\int_D\det D^2u$ as the measure
of the subdifferential $|\partial u(D)|$
of a convex function $u$ for any Borel set $D$. This notion of
solution is weaker than the viscosity solution.
V. Oliker and L. Prussner developed a discrete counterpart and proved
convergence of the ensuing geometric numerical method
\cite{OlikerPrussner88}. Even though this is a natural idea, the
scheme has defied analysis ever since its conception. The purpose of
this paper is to fill this gap upon deriving stability and decay rates
in the max norm for the scheme of \cite{OlikerPrussner88}.

Other methods do exist such as the vanishing moment method of
X. Feng and M. Neilan \cite{FengNeilan09JSC, FengNeilan09SIAM,
  FengNeilan11}, the numerical moment method of X. Feng et
al. \cite{FengKaoLewis13}, the penalty method of S. Brenner et
al. \cite{BrennerGudiNeilanSung11, BrennerNeilan12}, nonconforming
elements and quadratic elements by M. Neilan \cite{Neilan10,
  Neilan13}, standard finite element methods by G. Awanou \cite{Awanou15},
  least squares and augmented Lagrangian methods of E. Dean
and R. Glowinski \cite{DeanGlowinski03, DeanGlowinski04,
  DeanGlowinski06a, DeanGlowinski06b, SorensenGlowinski10}, and the
$C^1$ finite element method of K. B{\"o}hmer \cite{Bohmer08}.
Error estimates in $H^1(\Omega)$
are established in \cite{BrennerGudiNeilanSung11,BrennerNeilan12}
for solutions $u$ with regularity $H^3$ and above.

The Oliker-Prussner method reads as follows \cite{OlikerPrussner88}. Let $\Nh$ be a 
set of nodes with quasi-uniform spacing $h$ and let $\mathcal{T}_h$
be a subordinate mesh which induces a computational domain
$\dm_h \subset\dm$. Let $\Nhi\subset\dm$ denote the interior nodes and
$\Nhb\subset\partial\Omega$ the boundary nodes. The discrete solution $u_h$ is a 
{\it convex nodal function} defined on $\Nh$ satisfying
$u_h(x_i)=g(x_i)$ for all $x_i\in\Nhb$ and
\begin{equation}\label{discrete}
  |\partial u_h(x_i)| = f_i
  \qquad\forall \, x_i\in\Nhi,
\end{equation}

where $|\cdot|$ denotes the
$d$-dimensional Lebesgue measure, $\partial u_h(x_i)$ is the
subdifferential of $u_h$ at $x_i$ (see \eqref{subdifferential_node}),
and $f_i = \int_{\dm} f \phi_i$ is a suitable averaging of $f$ against
the canonical hat basis functions $\{\phi_i\}$ over $\Th$
\cite{OlikerPrussner88}.
Our primary goal in this paper is to establish a bound for the error
$u-u_h$ in the $L^\infty(\Omega_h)$-norm, which seems missing in the
current literature for \eqref{discrete}.

This endeavor entails developing suitable notions of stability,
consistency, and monotonicity within the max-norm framework.
We now outline the main ingredients. 
We first need to control the $L^{\infty}$-norm of a function $v$
by the size of its subdifferential. This is the celebrated {\it Alexandroff
estimate} \cite{Gutierrez01}, whose discrete counterpart for a
nodal function $v_h$ is established in \cite{NochettoZhang}:
if $\mathcal{C}_h^-(v_h)$ is the (lower) {\it contact set} of nodes,
namely $x_i\in\Nh$ so that $v_h(x_i) = \Gamma (v_h)(x_i)$ with
$\Gamma(v_h)$ the (lower) convex envelope of $v_h$, then
\begin{equation}\label{Alexandroff}
\max_{x_i\in\Nh} v_h^-(x_i) \leq C \left (
\sum_{x_i \in \mathcal{C}_h^-(v_h)} \abs{\partial v_h(x_i)} \right)^{1/d} ,
\end{equation}
where $v_h\ge0$ on $\partial\Omega$ and
$v^-_h(x) = \max\{-v_h(x), 0\}$ is the negative part of $v_h$.
Note that the constant $C=C(d,\Omega)$ is proportional to the diameter
of $\dm$
and the nodal contact set is just a collection of nodes \cite{NochettoZhang}.
We also refer to \cite{KuoTrudinger90, KuoTrudinger92, KuoTrudinger96, KuoTrudinger00} for discrete Alexandroff estimates similar to \eqref{Alexandroff}
and corresponding Alexandroff-Bakelman-Pucci maximum principles for general
fully nonlinear elliptic problems.

Our concept of stability in the max-norm and second ingredient
is the following {\it discrete continuous dependence} estimate,
which is a refinement of \eqref{Alexandroff} and is derived in Section
\ref{S:stability}.
If $v_h$ and $w_h$ are two nodal functions over
$\Nh$ and $v_h(x_i) \ge w_h(x_i)$ for all $x_i \in \Nhb$, then
\[
\max_{x_i\in\Nh} (v_h - w_h)^-(x_i) \leq C \left( \sum_{x_i \in \mathcal{C}_h^-(v_h - w_h)} \Big( \abs{ \partial v_h (x_i)}^{1/d} - \abs{ \partial w_h (x_i)}^{1/d} \Big )^d  \right)^{1/d}.
\]
where $C = C(d, \dm)$.
The proof of such an estimate relies on a combinination of two novel
tools from analysis and geometry, namely the discrete Alexandroff
estimate \eqref{Alexandroff} and the Brunn-Minkowski inequality 
(see Lemma \ref{BM}).
The above estimate will be instrumental to compare the discrete
solution $u_h$ with the nodal function $N_hu$ associated with the
exact solution $u$, which is defined as $N_hu(x_i)=u(x_i)$ for all $x_i\in\Nh$.

The third ingredient is {\it operator consistency}, which is a careful
study of the discrepancy between $u$ and $N_hu$ carried out in Section
\ref{S:consistency}.
We first show that the discrete operator is exact for convex quadratic
polynomials $p$ satisfying $0 < \lambda I \leq D^2p \leq \Lambda I$, namely
\[
  |\partial N_h p(x_i)| = \int_{\dm} \phi_i(x) \det D^2 p(x) dx 
\]
at interior nodes $x_i\in\Nhi$ so that dist $(x_i,\partial\Omega_h) \ge Rh$
with constant $R=R(\lambda,\Lambda)$, provided $\Nhi$
is {\it translation invariant}.
For $u \in C^{2, \alpha}(\overline{\dm})$, we immediately deduce that the consistency error is of order $O(h^{\alpha})$ for $0 < \alpha \leq 1$
\[
\Big| \abs{\partial N_h u(x_i)} - \int_{\dm} \phi_i(x) \det D^2 u(x)
dx \Big| \le C h^{\alpha} |u|_{C^{2,\alpha}(\overline{B_i})}
\int_\Omega \phi_i(x)  dx,
\]
where $B_i := B_{Rh}(x_i)$ is a ball centered at $x_i$ with radius $Rh$.
We can also measure the consistency error in Sobolev norms. If
$u\in W^s_q(\Omega)$ with $ \frac d q + 2 < s \leq 3$, then we exploit
the Sobolev embedding $W^s_q(\Omega) \subset C^{2, \alpha}(\overline{\dm})$
with $\alpha = s - 2 - d/q$ and thus replace
$|u|_{C^{2,\alpha}(\overline{B_i})}$ by $|u|_{W^s_q(B_i)}$.
Since the set of nodes $\Nh$ conforms to $\partial\Omega$, its translation
invariance structure breaks down for nodes close to $\partial\Omega$,
and so do the consistency bounds which become of order $O(1)$.
These nodes are handled differently via a discrete barrier argument
discussed in Section \ref{S:barrier}.

Combining the consistency and stability estimates, together with the
non-degeneracy assumption $f\ge \lambda_F>0$, we derive the following
pointwise convergence rate for $C^{2,\alpha}$-solutions in Section \ref{S:Rates}
\[
\inftynorm{u - \Gamma(u_h)}  \leq C h^{\alpha},
\]
where the constant $C = C(d, \Omega, \lambda, \Lambda, \lambda_F)
\big(\norm{u}_{C^{2, \alpha}(\overline{\dm})} + |u|_{W^2_\infty(\Omega)} \big)$. 
We point out that the $C^{2,\alpha}$-regularity assumption
of $u$ is a consequence of suitable assumptions on $f$.
In fact, if $0 < \lambda_F \le f \le \Lambda_F$, then
$0 < \lambda \le D^2 u \le \Lambda$ for some constant
$\lambda, \Lambda$, and if $f \in C^{\alpha}(\overline{\dm})$ and $\Omega$ is of class
$C^{2, \alpha}$, then $u \in C^{2, \alpha}(\overline{\dm})$
 \cite{Caffarelli90a}, \cite[Section 4.3]{Gutierrez01}.
 We also stress that the discrete barrier argument for 
nodes close to $\partial\Omega$ is
responsible for the semi-norm $|u|_{W^2_\infty(\Omega)}$ in the error estimate.

In addition, we prove in Section \ref{S:Rates}
the following pointwise error estimate for functions with
$W^s_q$-regularity and $ \frac d q + 2 < s \leq 3$
\[
  \inftynorm{u - \Gamma(u_h)} \leq C h^{s-2} ,
\]
where the constant $C = C(d, \dm, \lambda, \Lambda,\lambda_F) \big(
|u|_{W^s_q(\dm)} + |u|_{W^2_\infty(\Omega)} \big)$. This estimate
shows that the discrete method \eqref{discrete} exhibits first order accuracy in the
max-norm provided $u\in W^3_q(\Omega)$ with $q>d$. Since
$u\in W^3_q(\Omega)\subset C^2(\overline{\dm})$, the above error
estimate is still in the realm of classical solutions. However, our
results extend to solutions
$u \in W^{2,\infty}(\overline{\dm})\setminus C^2(\overline{\dm})$ whose
Hessian $D^2 u$ is discontinuous across a set $S$ of (box) dimension $n < d$:
\[
  \inftynorm{u - \Gamma(u_h)} \leq C h^{s-2} \, |u|_{W^s_q(\Omega \setminus S)}
  +C h^{\frac{d-n}{d}} \, |u|_{W^2_\infty(\Omega)}.
\]

The rest of this paper is organized as follows.
In Section \ref{S:discreteconvexity} we discuss our notion of discrete
convexity, a topic that has received considerable attention recently,
and explore properties of subdifferentials.
In Section \ref{S:soln_ap}, we introduce the Alexandroff and
viscosity solution concepts for the \MA equation \eqref{pde}
as well as the geometric method \eqref{discrete}.
We prove stability of \eqref{discrete} in Section \ref{S:stability}
and consistency in Section \ref{S:consistency}. We conclude with
three rates of convergence depending on solution regularity in Section \ref{S:Rates}.

%
	\section{Discrete Convexity}\label{S:discreteconvexity}
%

Approximating convex solutions of the \MA equation \eqref{pde} entails two
  essential difficulties: dealing with discrete convexity and the
  fully nonlinear nature of \eqref{pde}. The former issue
 has been
investigated in
\cite{ChoneLeMeur01, CarlierLachandRobertMaury01, AguileraMorin09,
   Oberman13, MerigotOudet14}, 
and \cite{ChoneLeMeur01} shows that convex piecewise
linear functions over a sequence of shape-regular meshes obtained by
uniform refinement of a fixed mesh may not be dense in the
set of convex functions. In fact, the Lagrange interpolant of a
convex function may not even be convex. Several notions of discrete
convexity have been proposed in the literature:
\cite{CarlierLachandRobertMaury01} deals with convex function
interpolation on given meshes;
\cite{AguileraMorin09} introduces finite element functions with
positive weak Hessian;
\cite{Oberman13} imposes positive second order finite differences in
all directions within a given stencil. In this paper, we deal with
nodal functions and say they are convex if they admit a supporting
hyperplane at every node. We make this explicit below.

\subsection{Nodes and Meshes}

In contrast to mesh-based methods, the discretization of
\eqref{pde} hinges on a collection of nodes
$\mathcal{N}_h := \Nhi\cup\mathcal{N}_h^{\partial}$ so that
\[
\Nhi : = \{x_i\}_{i=1}^n \subset\Omega,
\qquad
\mathcal{N}_h^{\partial} := \{x_i\}_{i=n+1}^N \subset\partial\Omega,
\]
and a collection of simplices (or elements) $T$ with nodes $x_i$
which form a conforming mesh $\Th$ of $\Omega$ and determine the computational
domain $\Omega_h := \cup_{T\in\Th} T$; since $\Omega$ is convex we infer
that $\Omega_h\subset\Omega$.
For each element $T$, we denote by $h_T$ the diameter of $ T$ and
by $\rho_T$ the diameter of the largest inscribed ball in $T$.
For each node $x_i$, we define the local spacing at $x_i$ as
\[
h_i := \max\{ h_T: \, x_i \in T \}.
\]
We say that the nodal set $\Nh$ is {\it quasi-uniform} if there exist constants
$0< \gamma \leq 1$ and $h > 0$ such that $\gamma h \leq h_i \leq h$
for all $1 \le i \le n$.
We define the shape-regularity constant of an element $T$ to be
$
  \sigma_T := \frac{h_T}{\rho_T} 
$
and we say that $\Nh$ is {\it shape-regular}
if there exists $\sigma>0$ such that $\sigma_T \leq \sigma$ for all elements $T$.
We will assume
throughout that $\Nh$ is quasi-uniform and shape-regular.

We say that $\Nhi$ is {\em translation invariant} if there is a
basis $\{e_j\}_{j=1}^d$ of $\mathbb R^d$ with $|e_j| \leq 1$
for all $1\leq j \leq d$ so that
\begin{equation}\label{translation-invariant}  
  \Nhi = \left\{ x_i = h \sum_{j=1}^d k_j e_j: k_j \in \mathbb Z \right\} \cap \dm.
\end{equation}
For the boundary nodes $\Nhb$ we only require that
\[
  \bdry \subset \cup_{x_i \in \Nhb} B_{h/2}(x_i).
\]
Obviously, a cartesian lattice with spacing $\sqrt{d}h$ is translation
invariant and $\{e_j\}_{j=1}^d$ are the canonical unit vectors in $\mathbb{R}^d$. 

We denote by $\{\phi_i\}_{i=1}^n$
the canonical basis of piecewise linear functions associated with
$\Nhi$ over $\Th$. We say that $\{\phi_i\}_{i=1}^n$ is {\it translation
invariant} provided that for all $x_i,x_j\in\Nhi$ such that
dist$(x_i,\bdry_h)$, dist$(x_j,\bdry_h)>h$ and for all $x\in\mathbb{R}^d$ we have
\begin{align}\label{meshinvariant}
\phi_i(x+x_i) = \phi_j(x + x_j).
\end{align}
If $\Nhi$ is translation invariant, then so is $\{\phi_i\}_{i=1}^n$ for
a suitable mesh $\Th$. Since the construction of such $\Th$ is obvious for
$d=2$, we now examine the case $d=3$.
Take a node $z \in \Nhi$ with dist$(z, \bdry_h) > \sqrt{d}h$ and
consider the box
\[
Q = \Big\{x \in \dm: \, x = z + \sum_{j=1}^d t_j e_j, \; 0 \leq t_j \leq 1 \Big\}.
\]
We would like to divide that box into a set of six disjoint
tetrahedra $\{T_i \}_{i=1}^6$ such that $\cup_{i=1}^6 T_i = Q$. 
To do so, we label the eight nodes of $Q$ as follows:
\begin{gather*}
v_0 = z, \quad v_1 = z + e_3, \quad v_2 = z + e_1 + e_3,
\quad v_4 = z + e_2, \quad v_3 = z + e_1,
\\
v_5 = z+ e_2 + e_3, \quad v_6 = z + e_1 + e_2 + e_3,
\quad v_7 = z+e_1+e_2.
\end{gather*}
Let $\{T_i \}_{i=1}^6$ be the convex hulls of the given nodes
\begin{align*}
T_1 = \hull \{v_0, v_1, v_3, v_4\},
~
T_2 = \hull \{v_1, v_2, v_4, v_5\},
~
T_3 = \hull \{v_1, v_2, v_3, v_4\},
\\
T_4 = \hull \{v_6, v_2, v_5, v_7 \},
~
T_5 = \hull \{v_7, v_4, v_3, v_2 \},
~
T_6 = \hull \{v_7, v_4, v_5, v_2 \}.
\end{align*}
We realize that opposite faces are cut into two compatible triangles, e.g. faces
$\hull \{v_0,v_1,v_2,v_3\}$ and $\hull \{v_4,v_5,v_6,v_7 \}$ are cut
along the segments $\hull \{v_1,v_3\}$ and $\hull \{v_5,v_7\}$.
Finally, for every node $x_i\in\Nhi$, we build $Q_i = Q - z + x_i$ and corresponding
six tetrahedra, and observe that this construction yields a conforming mesh
$\Th$ satisfying \eqref{meshinvariant}.

\subsection{Convexity of Nodal Functions}\label{S:nodal-functions}
We say that the nodal function $u_h:\mathcal {N}_h \to \mathbb{R}$ is 
{\em convex} if for all $x_i \in \Nhi$ 
there is a supporting hyperplane $L$ of $u_h$, that is
\[
  L(x_j) \leq u_h(x_j) \text{ for all $x_j \in \mathcal{N}_h$ and } L(x_i) = u_h(x_i).
\]
We define the convex envelope of a nodal function $u_h$ to be
\begin{align}
  \label{convexenvelope}
  \Gamma (u_h)(x) = \sup_{L \text{ affine}} \{ L(x): \; L(x_i) \leq u_h(x_i)
  \quad \forall x_i \in \Nh \} \quad \forall x \in \dm_h.
\end{align}
If the nodal function $u_h$ is convex, then we have
\begin{align}\label{extension}
  u_h(x_i) = \Gamma (u_h)(x_i) \quad \text{for all $x_i \in \Nhi$.}  
\end{align}
We regard $\Gamma (u_h)$ as a natural convex extension of
$u_h$ and call it the {\it convex interpolant} of $u_h$.
On the other hand, given a continuous function
$u:\overline{\Omega}\to\mathbb{R}$ we denote by
$N_h u:\Nh\to\mathbb{R}$ the nodal function associated with $u$:
\begin{equation}\label{nodal-function}
  N_h u(x_i) = u(x_i)
  \quad\forall \, x_i\in\Nh.
\end{equation}

Since $\Gamma (u_h)$ is a piecewise linear function, it induces a
mesh $\Th$ which depends on $u_h$ and is in general different from the
original mesh satisfying \eqref{meshinvariant}; see Figure \ref{fig:anisotropy}.
Given a convex nodal function $u_h$
and a node $x_i \in \Nhi$,
we define the set $A_i(u_h)$ of {\it adjacent nodes}
(or {\it adjacent set}) of $x_i$ for $u_h$ as
a collection of nodes $x_j \in \Nh$ closest to $x_i$ such that there exists a
supporting hyperplane $L$ of $u_h$ at $x_i$ and $L(x_j) = u_h(x_j)$.
The set $A_i(u_h)$ is the collection of nodes in the star associated
with $x_i$ in the mesh $\Th$ induced by $\Gamma(u_h)$.
The following example illustrates that such a star could be quite
elongated even for a cartesian lattice $\Nhi$, especially if
the Hessian $D^2u$ is degenerate or nearly degenerate.
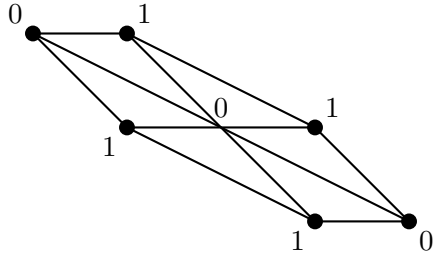
\begin{figure}[h!]
\begin{center}
\begin{tikzpicture}[scale = 1.25]
\draw[-,thick](1,0)--(-1,1)--(-2,1)--(-1,0)--(1,-1)--(2,-1)--(1,0);
\draw[-,thick](1,0)--(0,0)--(-1, 0);
\draw[-,thick](-1,1)--(0,0)--(1, -1);
\draw[-,thick](-2,1)--(0,0)--(2, -1);
\node[inner sep = 0pt,minimum size=6pt,fill=black!100,circle] (n2) at (1,0)  {};
\node[inner sep = 0pt,minimum size=6pt,fill=black!100,circle] (n2) at (-1,1)  {};
\node[inner sep = 0pt,minimum size=6pt,fill=black!100,circle] (n2) at (-2,1)  {};
\node[inner sep = 0pt,minimum size=6pt,fill=black!100,circle] (n2) at (-1,0)  {};
\node[inner sep = 0pt,minimum size=6pt,fill=black!100,circle] (n2) at (1,-1)  {};
\node[inner sep = 0pt,minimum size=6pt,fill=black!100,circle] (n2) at (2,-1)  {};
\node at (1,0) [above right] {1};
\node at (-1,0) [below left] {1};
\node at (-1,1) [above right] {1};
\node at (1,-1) [below left] {1};
\node at (0,0) [above] {0};
\node at (-2,1) [above left] {0};
\node at (2,-1) [below right] {0};
\end{tikzpicture}
\caption{Anisotropic star at $(0,0)$ induced by the convex envelope $\Gamma(u_h)$
of the nodal function $u_h(x_i) = (x_i \cdot e)^2$ where $e = (1,2)$.
Note that $\Gamma (u_h)(x) = |x \cdot e|$ in the star.}
\label{fig:anisotropy}
\end{center}
\end{figure}
\begin{example}[anisotropic star]\label{E:anisotropic-star}
\rm
Let $\dm= \mathbb{R}^2$, $\mathcal{N}_h = \mathbb{Z}^2$ and $h=1$. Let 
$u(x) = (x \cdot e)^2$ where $e = (1,m)$ for some integer $m\ge1$ and
$u_h(x_i) = u(x_i)$ for all $x_i \in \mathcal{N}_h$. 
Then the convex envelope $\Gamma(u_h)$ of $u_h$ induces an anisotropic
mesh $\Th$; Figure \ref{fig:anisotropy} displays the star associated
to the origin for $m=2$.
The convex envelope $\Gamma(u_h)$ in such a star is $\Gamma(u_h)(x)=|x \cdot e|$. 
\end{example}

Given a mesh $\Th$ with nodes $\Nh$
we denote by $I_h(u_h)$ the Lagrange interpolant $I_h(u_h)$ of $u_h$ over
$\Th$, namely the continuous piecewise linear
function that interpolates the nodal values of $u_h$ over $\Th$.
The following property is helpful to check discrete convexity:
given a mesh $\Th$ with nodes $\Nh$ and a nodal function 
$u_h$, $I_h(u_h)$
is convex if and only if it satisfies \cite[Lemma 5.3]{NochettoZhang}
\begin{align}\label{sign} 
  \jump{\gradv I_h(u_h)}|_F \geq 0 \text{ for all faces $F$}
\end{align}
where $F = T^+\cap T^-$ with $T^\pm\in\Th$, the jump is given by
\begin{equation}\label{jump-def}
\jump{\gradv I_h(u_h)}|_F
:= - n_F^+ \cdot \gradv  I_h(u_h)|_{T_+} - n_F^- \cdot \gradv  I_h(u_h)|_{T_-}
\end{equation}
and $n_F^{\pm}$ are the outer normal vectors of $T_{\pm}$ on face $F$.
If $I_h(u_h)$ is convex, then $I_h(u_h)=\Gamma(u_h)$.

We finally let the (nodal lower) {\it contact set} of a nodal
function $u_h$ be
\begin{align}
  \label{contactset}
  \mathcal{C}^-_h(u_h) := \{ x_i \in \Nhi : \; \Gamma (u_h)(x_i) = u_h(x_i) \}.
\end{align}
Note that if $u_h$ is convex, then $\mathcal{C}^-_h(u_h) = \Nhi$; otherwise,
$\mathcal{C}^-_h(u_h) \subset \Nhi$.

\subsection{Subdifferential}

Let $u : \Omega \rightarrow \mathbb{R}$ be a convex function and $x_0 \in \Omega$. 
The subdifferential of $u$ at $x_0$ is the set
\begin{align}\label{subdifferential_function}
  \partial u(x_0) = \{ v \in \mathbb{R}^d:  u(x) \ge u(x_0) + v \cdot (x - x_0) \}.
\end{align}
Given a set $S \subset \Omega$, we define 
\[
  \partial u(S) = \cup_{x\in S} \partial u(x).
\]
Since $u$ is a convex function, the subdifferential $\partial u(x)$
is non-empty and convex for all $x \in \Omega$.

Similarly, we define the subdifferential of nodal function $u_h$ at node $x_i$ by  
\begin{align}
  \label{subdifferential_node}
  \partial u_h(x_i) = \{v \in \mathbb{R}^d:\  u_h(x_j) \ge u_h(x_i)+ v \cdot (x_j-x_i)\ \quad \forall x_j \in \mathcal{N}_h \}.
\end{align}
If the nodal function $u_h$ is convex, then $\partial u_h(x_i)$ is
non-empty for all $x_i \in \Nhi$. The following lemma
relates  definitions \eqref{subdifferential_function} and \eqref{subdifferential_node}.

\begin{lemma}[discrete subdifferential]\label{L:disc-subd}
If $u_h$ is a convex nodal function, then $\partial u_h(x_i) = \partial \Gamma(u_h)(x_i)$ for all $x_i \in \Nhi$. 
\end{lemma}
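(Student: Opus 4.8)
The plan is to establish the set identity by proving the two inclusions separately, and the whole argument rests on two elementary observations. First, since $u_h$ is convex, \eqref{extension} gives $\Gamma(u_h)(x_i) = u_h(x_i)$ at every interior node; second, directly from the definition \eqref{convexenvelope} of $\Gamma(u_h)$ as a supremum of affine functions lying below the nodal data, one has $\Gamma(u_h)(x_j) \le u_h(x_j)$ at \emph{every} node $x_j \in \Nh$, boundary nodes included. I would also record at the outset that the supremum in \eqref{convexenvelope} is over a nonempty family --- convexity of $u_h$ supplies at least one supporting hyperplane at $x_i$ --- so $\Gamma(u_h)$ is a genuine convex (piecewise linear) function, finite near interior nodes, and the subdifferential \eqref{subdifferential_function} is meaningful at $x_i$.

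For the inclusion $\partial u_h(x_i) \subseteq \partial \Gamma(u_h)(x_i)$: take $v \in \partial u_h(x_i)$ and consider the affine map $L(x) := u_h(x_i) + v\cdot(x - x_i)$. By \eqref{subdifferential_node} we have $L(x_j) \le u_h(x_j)$ for all $x_j \in \Nh$, so $L$ is admissible in \eqref{convexenvelope} and hence $L(x) \le \Gamma(u_h)(x)$ for all $x \in \dm$. Since moreover $L(x_i) = u_h(x_i) = \Gamma(u_h)(x_i)$ by the first observation, $v$ satisfies the defining inequality of $\partial \Gamma(u_h)(x_i)$ in \eqref{subdifferential_function}.

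For the reverse inclusion $\partial \Gamma(u_h)(x_i) \subseteq \partial u_h(x_i)$: take $v \in \partial \Gamma(u_h)(x_i)$, so $\Gamma(u_h)(x) \ge \Gamma(u_h)(x_i) + v\cdot(x - x_i)$ for all $x \in \dm$; by continuity of $\Gamma(u_h)$ up to $\bdry$ (or a limiting argument) the same inequality holds at $x = x_j$ for every $x_j \in \Nh$. Applying the second observation on the left and the first on the right yields $u_h(x_j) \ge \Gamma(u_h)(x_j) \ge u_h(x_i) + v\cdot(x_j - x_i)$ for all $x_j \in \Nh$, which is exactly the condition \eqref{subdifferential_node} for $v \in \partial u_h(x_i)$.

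The argument is short, but the point deserving care --- and where a careless proof would slip --- is the asymmetry in the quantifiers: $\partial \Gamma(u_h)(x_i)$ is defined by testing against all $x \in \dm$, whereas $\partial u_h(x_i)$ only tests against the finite nodal set, so one must verify that testing against nodes is sufficient for $\Gamma(u_h)$. This is precisely what the envelope structure of $\Gamma(u_h)$ delivers: its graph is the lower convex hull of the points $\{(x_j, u_h(x_j))\}_{x_j\in\Nh}$, so an affine function lies below the graph of $\Gamma(u_h)$ on $\dm$ if and only if it lies below those generating points, which is the content of the two observations above. As a by-product one sees that $\partial u_h(x_i)$ is exactly the set of slopes of supporting hyperplanes of $u_h$ touching at $x_i$, the geometric object entering the scheme \eqref{discrete}.
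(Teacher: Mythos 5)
Your proof is correct, and for the nontrivial inclusion it takes a genuinely different route from the paper. The paper proves $\partial u_h(x_i)\subseteq\partial\Gamma(u_h)(x_i)$ by passing through the mesh $\Th$ induced by the envelope: it transfers the nodal inequality to the vertices of each element $K$ via \eqref{extension} and then uses linearity of $\Gamma(u_h)$ on $K$ to extend the inequality to all of $K$. You instead observe that the supporting affine function $L(x)=u_h(x_i)+v\cdot(x-x_i)$ is itself an admissible competitor in the supremum \eqref{convexenvelope}, so $L\le\Gamma(u_h)$ on all of $\dm$ while $L(x_i)=\Gamma(u_h)(x_i)$ by \eqref{extension}; this gives the inclusion directly from the definition of the envelope, with no reference to the induced mesh or to piecewise linearity. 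Your route is more elementary and also sidesteps a small gloss in the paper, which invokes \eqref{extension} at all vertices $x_j$ of $K$ even though \eqref{extension} is stated only for interior nodes (at boundary vertices one only knows $\Gamma(u_h)(x_j)\le u_h(x_j)$ a priori, and the inequality needed there goes the other way, so the paper is implicitly using contact of the envelope at vertices of the induced mesh); the paper's mesh-based argument, on the other hand, connects naturally with the later characterization of the subdifferential as a convex hull of element gradients (Lemma \ref{char_subdifferential}). For the reverse inclusion $\partial\Gamma(u_h)(x_i)\subseteq\partial u_h(x_i)$ your argument (evaluate at nodes, use $\Gamma(u_h)\le u_h$ on $\Nh$ and equality at $x_i$) is essentially the paper's, and your explicit handling of boundary nodes on $\bdry$ by continuity of $\Gamma(u_h)$ is a reasonable way to make the quantifier over $x\in\dm$ usable at those nodes.
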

\begin{proof}
  Obviously, if $v \in  \partial \Gamma(u_h)(x_i)$, that is, 
\[
  \Gamma(u_h)(x) \geq  \Gamma(u_h)(x_i) + v\cdot (x - x_i)  \quad \forall x \in \dm,
\]
then $v \in \partial u_h(x_i)$ thanks to \eqref{extension}.

Conversely, let $\Th$ be a mesh induced by the convex envelope
$\Gamma(u_h)$, let $T \in \Th$ be an element with vertices $\{x_j\}$.
If $v \in \partial u_h(x_i)$, then
$
   u_h(x_j) \ge u_h(x_i)+ v \cdot (x_j-x_i)
$
for all $x_j \in \Nh$, whence again thanks to \eqref{extension}, we have
$
   \Gamma(u_h)(x_j) \ge \Gamma(u_h)(x_i)+ v \cdot (x_j-x_i)
$
for all vertices $x_j$ of $T$.
Since $\Gamma(u_h)$ is linear in $T$, we have
$
   \Gamma(u_h)(x) \ge \Gamma(u_h)(x_i)+ v \cdot (x-x_i)
$
for any $x \in T$. This shows that $v \in \partial \Gamma (u_h)(x_i)$ as well. 
\end{proof}

\begin{lemma}[subdifferential monotonicity] \label{monotonicity}
Let $u_h$ and $v_h$ be two convex nodal functions. If $u_h(x_i) \leq v_h(x_i)$ and $u_h(x_j) = v_h(x_j)$ for all $j \neq i$, then
\begin{align*}
\partial v_h(x_i) \subset \partial u_h(x_i)
\quad \text{and} \quad
\partial u_h(x_j) \subset \partial v_h(x_j)
\text{ for all $j \neq i$.}
\end{align*}
\end{lemma}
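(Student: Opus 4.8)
The plan is to argue directly from the definition \eqref{subdifferential_node} of the subdifferential of a nodal function, without passing through the convex envelope; convexity of $u_h$ and $v_h$ plays no role in the inclusions themselves, and merely guarantees that the subdifferentials appearing are nonempty, so that the statement is not vacuous.

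First I would establish $\partial v_h(x_i)\subset\partial u_h(x_i)$. Pick $v\in\partial v_h(x_i)$, so that $v_h(x_k)\ge v_h(x_i)+v\cdot(x_k-x_i)$ for every $x_k\in\Nh$, and verify the corresponding inequality with $u_h$ in place of $v_h$. For the test node $x_k=x_i$ this is an equality. For $x_k\ne x_i$ the hypotheses give $u_h(x_k)=v_h(x_k)$ and $u_h(x_i)\le v_h(x_i)$, whence
\[
u_h(x_k)=v_h(x_k)\ge v_h(x_i)+v\cdot(x_k-x_i)\ge u_h(x_i)+v\cdot(x_k-x_i),
\]
so $v\in\partial u_h(x_i)$.

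Next I would prove the remaining inclusions, i.e.\ $\partial u_h(x_j)\subset\partial v_h(x_j)$ for a fixed $j\ne i$. Take $v\in\partial u_h(x_j)$, so that $u_h(x_k)\ge u_h(x_j)+v\cdot(x_k-x_j)$ for all $x_k\in\Nh$, and recall $v_h(x_j)=u_h(x_j)$ since $j\ne i$. If the test node is $x_k\ne x_i$, then $v_h(x_k)=u_h(x_k)$ and the inequality carries over verbatim. If $x_k=x_i$, then using $u_h(x_i)\le v_h(x_i)$,
\[
v_h(x_i)\ge u_h(x_i)\ge u_h(x_j)+v\cdot(x_i-x_j)=v_h(x_j)+v\cdot(x_i-x_j),
\]
so $v\in\partial v_h(x_j)$.

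I do not expect a genuine obstacle: the whole proof reduces to a two-case split according to whether the test node coincides with the perturbed node $x_i$, reflecting the elementary fact that raising a single nodal value relaxes the supporting-hyperplane constraint at that node while tightening it at every other node. The only subtlety worth recording is that the inclusions should be read as statements about the (possibly empty) sets defined by \eqref{subdifferential_node}, and it is precisely the convexity hypothesis on $u_h$ and $v_h$, in the sense of Section~\ref{S:nodal-functions}, that makes these sets nonempty.
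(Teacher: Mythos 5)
Your proof is correct, and it follows exactly the route the paper takes: the paper's proof is the one-line remark that the lemma ``follows directly from the definition of subdifferential \eqref{subdifferential_node},'' and your two-case verification is precisely the spelled-out version of that direct argument.
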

\begin{proof} This follows directly from the definition of
subdifferential \eqref{subdifferential_node}.
\end{proof}  

Given two compact sets $A,B$ of $\mathbb{R}^d$, their {\it Minkowski sum} is
given by
\begin{equation}\label{minkowski-sum}
A + B := \{ v + w \in \mathbb{R}^d:  v \in A \text{ and } w \in B \}.
\end{equation}
\begin{lemma}[addition of subdifferentials]\label{Msum}
If $u_h$ and $v_h$ be two convex nodal functions, then 
    \begin{align*}
\partial w_h(x_i) + \partial v_h(x_i) \subset \partial (w_h + v_h)(x_i) 
\text{ for all $x_i \in \Nhi$}.
\end{align*}
\end{lemma}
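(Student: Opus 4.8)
The plan is to argue directly from the definition of the nodal subdifferential in \eqref{subdifferential_node}, with no auxiliary machinery. Fix an interior node $x_i \in \Nhi$ and pick arbitrary elements $p \in \partial w_h(x_i)$ and $q \in \partial v_h(x_i)$. Since $w_h$ and $v_h$ are convex nodal functions, both subdifferentials are non-empty, so the Minkowski sum $\partial w_h(x_i) + \partial v_h(x_i)$ is non-empty and the asserted inclusion is not vacuous. It suffices to show that $p+q \in \partial(w_h+v_h)(x_i)$, since $p,q$ range over all of $\partial w_h(x_i)$ and $\partial v_h(x_i)$.

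To this end, I would simply add the two supporting-plane inequalities furnished by $p$ and $q$. By \eqref{subdifferential_node}, for every $x_j \in \Nh$ we have
\[
w_h(x_j) \ge w_h(x_i) + p\cdot(x_j-x_i), \qquad v_h(x_j) \ge v_h(x_i) + q\cdot(x_j-x_i).
\]
Summing these yields
\[
(w_h+v_h)(x_j) \ge (w_h+v_h)(x_i) + (p+q)\cdot(x_j-x_i) \qquad \text{for all } x_j \in \Nh,
\]
which is precisely the statement that $p+q \in \partial(w_h+v_h)(x_i)$. As $p$ and $q$ were arbitrary, $\partial w_h(x_i) + \partial v_h(x_i) \subset \partial(w_h+v_h)(x_i)$, as claimed.

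There is essentially no obstacle in this argument; it is purely a matter of unwinding definitions. The only point worth flagging is that the inclusion is in general strict, which is why the statement is phrased with $\subset$ rather than $=$: the polyhedral subdivision induced by $\Gamma(w_h+v_h)$ need not be a common refinement of those induced by $\Gamma(w_h)$ and $\Gamma(v_h)$, so the supporting hyperplanes of the sum at $x_i$ can be strictly more numerous than those obtained by adding individual supporting hyperplanes. No quasi-uniformity, shape regularity, or translation invariance of $\Nh$ is needed here; the result holds for an arbitrary node set.
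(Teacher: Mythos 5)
Your proof is correct and is essentially the same as the paper's: fix arbitrary elements of the two subdifferentials, add the defining inequalities from \eqref{subdifferential_node}, and conclude that the sum lies in $\partial(w_h+v_h)(x_i)$. The extra remarks about non-emptiness and possible strictness of the inclusion are fine but not needed.
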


\begin{proof}
We note that if $w \in \partial w_h(x_i)$ and $v \in \partial v_h(x_i)$, then
\[
  w_h(x_j) \ge w_h(x_i)+ w \cdot (x_j-x_i)
 \quad \text{and} \quad
  v_h(x_j) \ge v_h(x_i)+ v \cdot (x_j-x_i) 
\]
for all $x_j \in \Nh$. Adding both inequalities yields
\[
 w_h(x_j) + v_h(x_j) \ge w_h(x_i) + v_h(x_i) + (w+v) \cdot (x_j-x_i)
\]
which implies $w + v \in \partial (w_h + v_h)$.
\end{proof}

Computing the subdifferential $\partial u_h(x_i)$ of a
given convex nodal function $u_h$ at $x_i\in\Nhi$
is a nontrivial task because it is nonlocal. In fact, Lemma
\ref{L:disc-subd} shows that it involves computing the convex envelope
$\Gamma(u_h)$ of $u_h$. The following lemma, proved in \cite[Lemma 5.4]{NochettoZhang},
characterizes $\partial\Gamma(u_h)$.
\begin{lemma}[characterization of subdifferential]\label{char_subdifferential}
Let $u_h$ be a convex nodal function and $\Th$ be the mesh
induced by its convex envelope $\Gamma(u_h)$.
Then the subdifferential of $u_h$ at $x_i\in\Nhi$ is the convex hull
of the constant gradients
$
\gradv \Gamma(u_h)|_T
$
for all $T \in \Th$ which contain $x_i$.
\end{lemma}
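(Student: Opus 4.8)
The plan is to reduce to a classical fact about piecewise linear convex functions. By Lemma \ref{L:disc-subd} it suffices to characterize $\partial w(x_i)$ for $w:=\Gamma(u_h)$, which is convex and affine on each element $K$ of the induced mesh $\Th$; write $L_K$ for the affine function on $\mathbb{R}^d$ that agrees with $w$ on $K$, so that $\gradv w|_K=\gradv L_K$ and $L_K(x_i)=w(x_i)$ whenever $x_i\in K$. Since $x_i\in\Nhi$ is an interior node, the star $S_i:=\bigcup_{K\ni x_i}K$ is a full-dimensional neighbourhood of $x_i$ in $\mathbb{R}^d$.

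First I would record the elementary fact that $w\ge L_K$ on all of $\Omega$, with equality on $K$. This follows by restricting $w$ to a segment joining an arbitrary point $y\in\Omega$ to an interior point of $K$ and using that a convex function of one real variable which coincides with an affine function on a subinterval must dominate it on the whole interval. As a consequence, on the star $S_i$ we have the representation $w=\max_{K\ni x_i}L_K$ (on each $K\ni x_i$, $w=L_K\ge L_{K'}$ for every $K'\ni x_i$), hence for every direction $d$ and all small $t>0$, $w(x_i+td)-w(x_i)=t\,\max_{K\ni x_i}\gradv w|_K\cdot d$; that is, the one-sided directional derivative $w'(x_i;d)$ equals the support function of the polytope $P:=\mathrm{conv}\{\gradv w|_K:\,K\ni x_i\}$.

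From the global inequality $w\ge L_K$ together with $L_K(x_i)=w(x_i)$ we get $\gradv w|_K\in\partial w(x_i)$ for each $K\ni x_i$, and since $\partial w(x_i)$ is convex this gives $P\subset\partial w(x_i)$. For the reverse inclusion, recall that $v\in\partial w(x_i)$ if and only if $v\cdot d\le w'(x_i;d)$ for every direction $d$; by the previous paragraph this is exactly $v\cdot d\le\sigma_P(d)$ for all $d$, which, $P$ being closed and convex, is equivalent to $v\in P$. Equivalently, if $v\notin P$ one separates $v$ from $P$ by a direction $d$ with $v\cdot d>\max_{K\ni x_i}\gradv w|_K\cdot d$ and plugs $x=x_i+td$ with $t>0$ small (so $x\in S_i$) into the subdifferential inequality $w(x)\ge w(x_i)+v\cdot(x-x_i)$ to reach a contradiction. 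This proves $\partial w(x_i)=P$, and Lemma \ref{L:disc-subd} concludes.

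The only delicate point is the passage from the local structure of $w$ near $x_i$ to a \emph{global} supporting property, namely $w\ge L_K$ on all of $\Omega$ rather than merely on $S_i$; everything else is routine bookkeeping with directional derivatives and convex duality. One should also be careful to use that $x_i$ is interior, so that $S_i$ is genuinely a neighbourhood of $x_i$, which is what makes $w'(x_i;\cdot)$ the support function of the full polytope $P$ rather than of one of its faces.
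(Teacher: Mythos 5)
Your proof is correct. Note that the paper does not actually prove this lemma; it defers to \cite[Lemma 5.4]{NochettoZhang}, so there is no in-paper argument to compare against. Your reconstruction is the standard convex-analysis proof and all the delicate points are handled properly: the reduction via Lemma \ref{L:disc-subd} to $w=\Gamma(u_h)$, the global supporting property $w\ge L_K$ on $\Omega$ (obtained by the one-dimensional restriction argument, which uses convexity of $\Omega$ so the segment stays in the domain), the identification of the one-sided directional derivative at the interior node $x_i$ with the support function of $P=\mathrm{conv}\{\gradv w|_K: K\ni x_i\}$ (valid because the star of $x_i$ is a full neighbourhood), and the separation argument for the reverse inclusion, which only needs the elementary half of the duality between subgradients and directional derivatives. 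As a self-contained substitute for the citation, the argument stands.
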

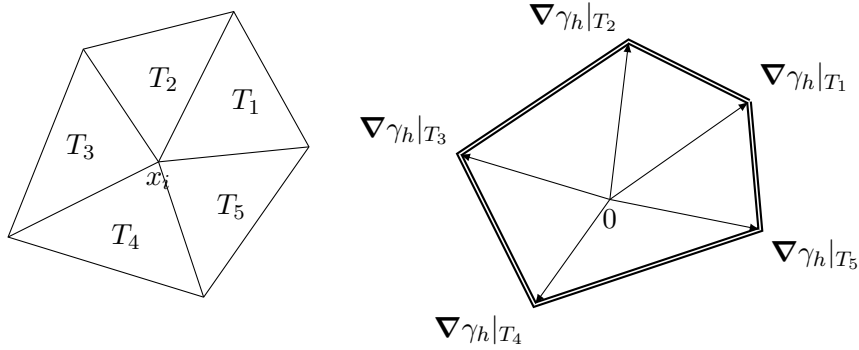
\begin{figure}[h!]
\begin{tikzpicture}
  \coordinate [label=below:$x_i$] (z1) at (1,1);
  \coordinate  (z2) at (3,1.2);
  \coordinate  (z3) at (2,3);
  \coordinate  (z4) at (0,2.5);
  \coordinate  (z5) at (-1,0);
  \coordinate  (z6) at (1.6,-0.8);
  \coordinate [label=below left:$T_1$] (k1) at (2.5, 2.1);
  \coordinate [label=below left:$T_2$] (k3) at (1.4,2.4);
  \coordinate [label=below left:$T_3$] (k4) at (0.3,1.5);
  \coordinate [label=below left:$T_4$] (k5) at (0.9,0.3);
  \coordinate [label=below left:$T_5$] (k2) at (2.3, 0.7);
  \coordinate [label=below :$0$] (o) at (7,0.5);
  \coordinate [label=above right:$\gradv \gamma_h|_{T_1}$] (g1) at (8.85,1.8);
  \coordinate [label=above left :$\gradv \gamma_h|_{T_2}$] (g2) at (7.25,2.6);
  \coordinate [label=above left:$\gradv \gamma_h|_{T_3}$] (g3) at  (5,1.1);
  \coordinate [label=below left:$\gradv \gamma_h|_{T_4}$] (g4) at  (6,-0.9);
  \coordinate [label=below right:$\gradv \gamma_h|_{T_5}$] (g5) at  (9,0.1);  
  \draw  (z1) -- (z2)  ;
  \draw  (z1) -- (z3)  ;
  \draw  (z1) -- (z4)  ;
  \draw  (z1) -- (z5)  ;
  \draw  (z1) -- (z6)  ;
  \draw  (z6)- - (z5) -- (z4) -- (z3) -- (z2) -- (z6);
  \draw [double, thick](g1) -- (g2) -- (g3) -- (g4) -- (g5) -- (g1);
  \draw [-latex] (o) -- (g1);
  \draw [-latex] (o) -- (g2);
  \draw [-latex] (o) -- (g3);
  \draw [-latex] (o) -- (g4);
  \draw [-latex] (o) -- (g5);
\end{tikzpicture}
\caption{Star centered at node $x_i$ corresponding to the mesh $\Th$
induced by the convex envelope $\gamma_h=\Gamma(u_h)$ of $u_h$
and subdifferential $\partial u_h(x_i)$ of the convex
nodal function $u_h$ at node $x_i$ such that $0\in\partial u_h(x_i)$.
The latter is the convex hull of the constant
element gradients $\nabla \gamma_h|_{T_j}$ for $1\le j\le 5$.}
\label{F:subdifferential}
\end{figure}

Figure \ref{F:subdifferential} depicts the subdifferential
$\partial u_h(x_i)$ of the
convex nodal function $u_h$ at node $x_i$ for $d=2$.
Computing $\partial u_h(x_i)$ is equivalent to finding
a mesh $\Th$ such that the jumps $\jump{\gradv I_h(u_h)}|_F$
of the Lagrange interpolant of $I_h(u_h)$ on faces $F$ are nonnegative. 

Unfortunately, the notion of discrete convexity is not sufficiently
robust geometrically. Consider a pair of elements $T^\pm\in\Th$ with
common face $F=T^+ \cap T^-$, and assume $\jump{\gradv I_h(u_h)}|_F=0$.
Increasing the values of $u_h$ at the nodes opposite to $F$ increases
the jump, and thus preserves convexity of $I_h(u_h)$ on $\Th$, whereas
decreasing the values of $u_h$ violates convexity on $\Th$.
\begin{lemma}[geometric stability]\label{L:geo-stab}
Let $u_h$ be a nodal function. If the convex envelope $\Gamma(u_h)$
of $u_h$ has positive jumps
on all faces of the induced mesh $\Th$, then $\Th$ is invariant under
small nodal perturbations of $u_h$.
\end{lemma}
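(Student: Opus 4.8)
The plan is to argue by continuity of the jump functional. Fix the mesh $\Th$ induced by $\Gamma(u_h)$, and recall from \eqref{sign}--\eqref{jump-def} that $\Gamma(u_h)=I_h(u_h)$ precisely because the jumps $\jump{\gradv I_h(u_h)}|_F$ are nonnegative on every interior face $F$; by hypothesis these jumps are in fact strictly positive. First I would observe that for a \emph{fixed} triangulation $\Th$, each gradient $\gradv I_h(w_h)|_K$ depends linearly (hence continuously) on the nodal values $\{w_h(x_j)\}$ of $w_h$ at the vertices of $K$, and therefore each jump $\jump{\gradv I_h(w_h)}|_F$ is a continuous — indeed linear — function of the nodal values. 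Consequently there is $\delta>0$ such that if $\|w_h - u_h\|_{\ell^\infty(\Nh)} < \delta$, then $\jump{\gradv I_h(w_h)}|_F > 0$ on every interior face $F$ of $\Th$ as well.

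Next I would invoke the characterization \eqref{sign}: since $I_h(w_h)$ has nonnegative (in fact positive) jumps on all faces of $\Th$, the piecewise linear function $I_h(w_h)$ is convex, and therefore $I_h(w_h)=\Gamma(w_h)$. In particular the convex envelope of the perturbed nodal function $w_h$ is again piecewise linear over the \emph{same} mesh $\Th$; that is, the mesh induced by $\Gamma(w_h)$ coincides with $\Th$. This is exactly the asserted invariance of $\Th$ under small nodal perturbations. I would also note that the argument shows slightly more: the combinatorics of $\Th$, the set of contact nodes $\mathcal C_h^-$, and the adjacent sets $A_i(w_h)$ are all locally constant in $w_h$ near $u_h$.

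The one point that needs a little care is the quantitative choice of $\delta$. The faces $F$ range over a finite set (the mesh $\Th$ is finite), so the minimum over faces of the positive jumps $\jump{\gradv I_h(u_h)}|_F$ is a strictly positive number $\mu$; and the jump is an explicit linear combination of nodal differences divided by mesh quantities $\rho_T$ that are bounded below by shape-regularity and quasi-uniformity ($\rho_T \ge \gamma h/\sigma$). Hence one gets a bound of the form $|\jump{\gradv I_h(w_h)}|_F - \jump{\gradv I_h(u_h)}|_F| \le C h^{-1}\|w_h-u_h\|_{\ell^\infty(\Nh)}$ with $C=C(d,\sigma,\gamma)$, and it suffices to take $\delta < \mu h/C$. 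The only genuine subtlety — and the place I would be most careful — is to make sure that a small perturbation does not create a \emph{new} lower-dimensional contact (a vertex of $w_h$ rising onto or falling through a face not in $\Th$), but this cannot happen: if all faces of $\Th$ have strictly positive jumps then $I_h(w_h)$ is already the convex envelope, so there is nothing below it to contact. Thus the proof reduces to the continuity estimate above together with one application of \eqref{sign}.
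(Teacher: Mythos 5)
Your proof is correct and follows essentially the same route as the paper: the jumps are continuous (in fact linear, for a fixed mesh) in the nodal values, so strict positivity persists under small perturbations, and then \eqref{sign} identifies the perturbed interpolant with the convex envelope on the same mesh. The quantitative choice of $\delta$ via shape-regularity is a nice addition, but not needed for the qualitative statement the paper proves.
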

\begin{proof}
Since the jumps $\jump{\gradv \Gamma(u_h)}|_F$ are continuous with
respect to nodal variations of $u_h$, and they are positive, they
remain positive under small nodal perturbations. We then apply
\eqref{sign} to deduce the assertion.
\end{proof}

%
\section{Solution and Approximation}\label{S:soln_ap}
%

There are two notions of weak solutions of \eqref{pde}: the Alexandroff
solution hinges on a geometric interpretation of \eqref{pde} and the
viscosity solution relies on the comparison principle. We review these
definitions now and discuss a geometric approximation of \eqref{pde}
due to Oliker and Prussner \cite{OlikerPrussner88}.
  
%
	\subsection{Alexandroff Solution}
%

To motivate this solution concept, suppose for the moment that
$u\in C^2(\dm)$ is a strictly convex function satisfying
\eqref{pde} in a classical sense. The convexity and $C^2$-regularity
assumptions imply that $\partial u(x) = \gradv u(x)$
and that the subdifferential, viewed as a map
$\partial u:\Omega\to\mathbb R^d$, is injective.
Consequently, the change of variables $y=\nabla u(x)$ reveals that
\begin{align*}
\int_D f\, dx = \int_D \det D^2 u(x)\, dx =
\int_{\partial u(D)} \, dy = |\partial u(D)|
\end{align*}
for all Borel sets $D\subset \Omega$, 
where $\partial u(D) = \cup_{x\in D} \partial u(x)$ and $|\cdot|$ is
the $d$-dimensional Lebesgue measure. Since $\partial u$ is
well-defined for non-smooth convex functions, the above identity
allows one to widen the class of admissible solutions
\cite[Section 1.2]{Gutierrez01}.

\begin{definition}[\MA measure]
We define the \MA measure associated with a convex function $u \in C(\Omega)$ as
\[
Mu(D) = |\partial u (D)|
\]
for any Borel set $D$,
where $|\cdot |$ denotes the $d$-dimensional Lebesgue measure.
\end{definition}

\begin{definition}[Alexandroff solution] \label{D:Alexandroff}
Let $\mu$ be a Borel measure defined in $\Omega$, an open and convex
subset of $\mathbb{R}^d$.
We say a convex function $u \in C(\Omega)$ is an Alexandroff solution to the \MA equation 
\[
\det D^2 u = \mu
\]
if the \MA measure $Mu$ associated with $u$ equals $\mu$.
\end{definition}

The Alexandroff solution is closed under uniform convergence.
This is stated in the lemma below and its proof is given in
\cite[Lemma 1.2.3]{Gutierrez01}.
\begin{lemma}[weak convergence of \MA measures]\label{weakconvergence}
If $u_k$ are convex functions in $\dm$ such that $u_k \rightarrow u$
as $k\to\infty$ uniformly on compact subsets of $\dm$, then the
associated \MA measures $Mu_k$ tend to $Mu$ weakly, that is
\[
\int_{\dm} \phi(x) d Mu_k(x) \rightarrow \int_{\dm} \phi(x) d Mu(x)
\quad\text{as } k\to\infty,
\]
for every $\phi$ continuous with compact support in $\dm$. 
\end{lemma}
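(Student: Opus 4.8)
The plan is to reduce the statement to a pointwise-almost-everywhere convergence of the densities of the measures tested against continuous compactly supported functions, exploiting the geometric definition $Mu_k(S)=|\partial u_k(S)|$. First I would fix $\phi\in C_c(\dm)$ with compact support $K\subset\subset\dm$, and choose an open set $\dm'$ with $K\subset\subset\dm'\subset\subset\dm$. Since $u_k\to u$ uniformly on $\overline{\dm'}$, the convex functions $u_k$ are uniformly bounded and uniformly Lipschitz on $K$ (interior Lipschitz bounds for convex functions depend only on the sup-norm on a slightly larger set and the distance to the boundary of that set), so the subdifferentials $\partial u_k(K)$ all lie in a fixed compact ball $B_R\subset\mathbb R^d$; the same holds for $\partial u(K)$. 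Thus all the measures $Mu_k$ restricted to $K$ have total mass bounded by $|B_R|$, which gives tightness and lets me extract weakly convergent subsequences.

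The core step is to identify the weak limit as $Mu$. The natural route is to show, for any fixed closed ball $\overline{B}\subset\dm$ and any fixed open ball $B'$ with $\overline{B}\subset B'\subset\subset\dm$, the two one-sided inclusions
\[
\limsup_{k\to\infty} |\partial u_k(\overline{B})| \le |\partial u(\overline{B})|,
\qquad
\liminf_{k\to\infty} |\partial u_k(B')| \ge |\partial u(B')|.
\]
For the first: if $p$ is a cluster point of gradients $p_k\in\partial u_k(x_k)$ with $x_k\in\overline B$ and $x_k\to x_0\in\overline B$, then passing to the limit in $u_k(y)\ge u_k(x_k)+p_k\cdot(y-x_k)$ (using uniform convergence $u_k\to u$) gives $u(y)\ge u(x_0)+p\cdot(y-x_0)$, i.e. $p\in\partial u(\overline B)$; hence $\limsup$ of the sets is contained in $\partial u(\overline B)$ and, since these are compact sets, the measure inequality follows (a set slightly larger than $\partial u(\overline B)$ eventually contains $\partial u_k(\overline B)$). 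For the second: given $p\in\partial u(x_0)$ with $x_0\in B'$, strict convexity is not available, so one instead tilts — for $\epsilon>0$ the function $u(x)-p\cdot x+\epsilon|x-x_0|^2$ has a strict interior minimum, and the perturbed functions $u_k(x)-p\cdot x+\epsilon|x-x_0|^2$ attain minima at points $x_k\to x_0$ for $k$ large, yielding a gradient $p_k^\epsilon\in\partial u_k(x_k)$ close to $p$; a Lebesgue-measure/covering argument (this is exactly the classical argument in \cite[Lemma 1.2.3]{Gutierrez01}) converts this into the $\liminf$ bound for the measures. Combining the two inclusions with tightness shows every subsequential weak limit agrees with $Mu$ on balls, hence on all Borel sets, hence equals $Mu$; therefore the full sequence converges and in particular $\int_\dm\phi\,dMu_k\to\int_\dm\phi\,dMu$.

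The main obstacle is the $\liminf$ (lower-semicontinuity) direction: upper semicontinuity of subdifferentials under uniform convergence is a soft closed-graph argument, but showing no mass is \emph{lost} in the limit genuinely requires the perturbation/tilting trick together with the measure-theoretic covering estimate, because $u$ need not be strictly convex or smooth and gradients can collapse onto lower-dimensional sets. Since this lemma is quoted from \cite[Lemma 1.2.3]{Gutierrez01}, the cleanest presentation is to carry out the uniform interior Lipschitz bound (which secures tightness) in detail and then invoke the two semicontinuity properties of the Monge–Ampère measure under local uniform convergence, citing Gutiérrez for the covering estimate in the lower bound.
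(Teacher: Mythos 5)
Your proposal is correct in outline and follows essentially the same route as the paper, which does not prove this lemma itself but quotes it from \cite[Lemma 1.2.3]{Gutierrez01}: uniform interior Lipschitz bounds give tightness, a closed-graph argument gives the upper (no mass created) bound, and the tilting/covering step you defer to Guti\'errez gives the lower (no mass lost) bound. Since that last step is exactly the delicate part of the cited classical proof, citing it rather than reproving it is consistent with how the paper treats the lemma.
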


%
\subsection{Viscosity Solution}
%
This solution concept hinges on the comparison principle.\looseness=-1

\begin{definition}[viscosity solution]\label{D:viscositysolution}
Let $u \in C(\Omega)$ be a convex function and
$f \in C(\Omega), f \ge 0$.
The function $u$ is a viscosity sub-solution (super-solution) of
the \MA equation in $\Omega$ if whenever a convex function $\phi \in C^2(\Omega)$
and $x_0 \in \Omega$ are such that $(u - \phi)(x) \le (\ge) (u -
\phi)(x_0)$ for all $x$ in a neighborhood of $x_0$, then we must have
\[
\det D^2 \phi (x_0) \ge (\le) f(x_0).
\]
\end{definition}

If $u\in C(\Omega)$ is an
Alexandroff solution with $Mu=f$ and $f\in C(\Omega)$, then $u$ is a
viscosity solution \cite[Proposition 1.3.4]{Gutierrez01}.
Conversely, if $f \in C(\overline{\dm})$ and $f>0$, then the viscosity
solution is also the Alexandroff solution \cite[Proposition 1.7.1]{Gutierrez01}.

\subsection{Examples of Weak Solutions}
We show examples of Alexandroff and viscosity solutions, which are not
classical solutions, namely $u\notin C^2(\overline{\Omega})$.

\begin{example}[Alexandroff solution]\label{E:Alexandroff}
\rm
Let $\dm = B_1 (0) \subset \mathbb R^2$ and
\[
u (x) = |x| - 1 .
\]
The function $u(x)$ is an Alexandroff solution of the Monge-Amp\`{e}re equation 
\[
\det D^2 u = \pi \delta_{(x=0)},
\]
where $\delta_{(x=0)}$ is the Dirac measure at the origin; $u$ is not a viscosity solution.
\end{example}

\begin{example}[viscosity solution]\label{E:viscosity}
\rm
Let $\dm = B_2 (0) \subset \mathbb R^2$ and \cite{FroeseOberman11SIAM}
   \begin{align}\label{example2}
 u(x)  = \left\{
\begin{array}{ll}
\frac 12 |x|^2  \quad &\text{in $|x| \leq 1$},  
\\
\frac 12 |x|^2 + \frac 12 \big( |x| -1 \big)^2
\quad &\text{in $ 1 \leq |x| \leq 2$}.
\end{array} \right.
\end{align}
The function $u$ is a viscosity solution of the Monge-Amp\`{e}re equation
\begin{align*}
 \det D^2 u(x)  = \left\{
\begin{array}{ll}
1  \quad &\text{in $|x| \leq 1$},  
\\
4 - 2|x|^{-1} \quad &\text{in $ 1 \leq |x| \leq 2$}.
\end{array} \right.
\end{align*}
We note that
$u \in C^{1,1}(\overline{\Omega}) \setminus C^2(\overline{\dm})$
and the Hessian of $D^2u$ exhibits a jump discontinuity across
$\partial B_1(0)$.
\end{example}

\subsection{Oliker-Prussner Method}
Following \cite{OlikerPrussner88} we can approximate \eqref{pde}
exploiting its geometric interpretation.
Given a quasi-uniform and shape regular nodal set $\Nh$,
corresponding conforming mesh $\Th$ and canonical basis functions
$\{\phi_i\}_{i=1}^n$ associated with $\Nhi$, for any function $f\ge0$ we define the
nodal function $f_h:\Nh\to\mathbb{R}$ to be
\begin{equation}\label{fi}
f_i := \int_{\omega_i} f(x) \phi_i(x) dx
\quad\forall x_i\in\Nhi,
\end{equation}
with $\omega_i=\text{supp }(\phi_i)$ being the star corresponding to $x_i$.

The discretization of \eqref{pde} reads as follows:
we seek a convex nodal function $u_h$ satisfying
\begin{align}\label{FEM}
  \abs { \partial u_h(x_i)} = f_i
  \quad\forall \, x_i \in \Nhi.
\end{align}
We refer to \cite{OlikerPrussner88} for a proof of existence and
uniqueness of \eqref{FEM} for $d=2$. Below we give a proof of existence
for $d\ge2$ that ties the concepts developed so far together. We observe
that definition \eqref{fi} is different from that in \cite{OlikerPrussner88},
which replaces the functions $\phi_i$ by characteristic functions of disjoint
sets containing the nodes $x_i$,
and that $\{\phi_i\}_{i=1}^n$ need not be translation invariant for $u_h$
to exist. This property is instrumental later to derive consistency.

\subsection{Existence: Discrete Perron's Method}
We construct a monotone sequence $\{u_h^k\}_{k=0}^\infty$ of convex nodal functions,
namely
\[
u_h^{k+1}(x_i) \ge u_h^k(x_i) \quad\forall \, x_i\in\Nhi,
\]
which converges to a solution of \eqref{FEM} as $k\to\infty$.
For each $k\ge0$ there is a
mesh $\mathcal{T}_h^k$ with nodes $\Nh$ but possibly different
connectivity than $\mathcal{T}_h^j$ for $j<k$ and the property
\[
I_h^k(u_h^k) = \Gamma(u_h^k).
\]
Therefore, the interpolant $I_h^k(u_h^k)$ of $u_h^k$ over $\mathcal{T}_h^k$
is convex. We illustrate how these meshes $\mathcal{T}_h^k$
change with the iteration counter $k$.

\medskip\noindent
{\bf Construction of $u_h$.} We first initialize the iteration.
We assume that  $\dm$ is contained in a ball $B_R(0)$ of radius $R$
centered at the origin and $d\ge2$. We consider the quadratic polynomial
$p(x) := \frac{\Lambda^{1/d}}{2} \left( |x|^2 - 2R^2 \right)$ with
$\Lambda>0$ to be specified later. Then
\[
\det D^2 p(x) = \Lambda \quad\forall \, x\in \dm,
\qquad 
p(x) \leq g(x) \quad\forall \, x\in\bdry,
\]
provided $-\frac{\Lambda^{1/d}}{2} R^2 \le g(x)$ for all $x\in\bdry$.
We define $u_h^0=N_h p$, namely
\[
u_h^0(x_i) := p(x_i)
\quad\forall \, x_i \in \Nh.
\]
Let $\T_h^0$ be a Delaunay triangulation
associated with $\Nh$, which exists according to \cite[Theorem 2.3]{ChenXu04}.
For such a mesh, the Lagrange
interpolant of $u_h^0$ is convex \cite{ChenHolst11,  ChenXu04, Edelsbrunner00},
and thus coincides with the convex envelope of $u_h^0$:
\[
I_h^0(u_h^0) = \Gamma(u_h^0).
\]
We stress that $\T_h^0$ is in general different from the mesh $\Th$ used to
set \eqref{fi} and \eqref{FEM}.
We assert that $u_h^0$ is a subsolution of \eqref{FEM}. To see this we
need to estimate the measure of the subdifferential $|\partial u_h^0(x_i)|$
at every node $x_i\in\Nhi$. Since $\T_h^0$ is shape-regular, the star
$\omega_i^0$ of $\mathcal{T}_h^0$ around $x_i$ has a diameter
proportional to $h$ and contains a ball
$B_{\alpha h}(x_i)$ of radius $\alpha h$ with $\alpha>0$ only
dependent on shape regularity. Upon subtracting
the affine function $L(x) = p(x_i) + \nabla p(x_i) \cdot (x-x_i)$
from $p$, we can
assume that $p$ and its gradient vanish at $x=x_i$ without changing
$|\partial u_h^0(x_i)|$. Since $p(x) = \frac{\Lambda^{1/d}}{2}|x-x_i|^2$,
we see that $p(x) \ge \frac{\Lambda^{1/d}}{2} \alpha^2 h^2$ on
$\partial\omega_i^0$ and we can apply the discrete Alexandroff estimate
\eqref{Alexandroff} to $u_h^0 - \frac{\Lambda^{1/d}}{2} \alpha^2 h^2$
on $\omega_i^0$ to deduce
\[
|\partial u_h^0(x_i)|^{1/d} \ge C h^{-1} \sup_{\omega_i} \Big( u_h^0 -
\frac{\Lambda^{1/d}}{2} \alpha^2 h^2 \Big)^-
\ge C \Lambda^{1/d} h,
\]
whence
\[
|\partial u_h^0(x_i)| \ge C\Lambda h^d = C\Lambda |\omega_i^0|.
\]
Recalling from \eqref{fi} that $f_i=\int_{\omega_i} f\phi_i \le \Lambda_F |\omega_i|$,
and realizing that $|\omega_i|$ and $|\omega_i^0|$ are comparable,
we can choose $\Lambda>0$ sufficiently large so that
\begin{equation}\label{initial-subd}
|\partial u_h^0(x_i)| \ge f_i
\quad\forall x_i\in\Nhi.
\end{equation}
Moreover, in view of $u_h^0\le I_h^0 g$ on $\partial\Omega_h$ we infer that
$u_h^0$ is a subsolution of \eqref{FEM}, as asserted.

To construct the iterates $u_h^{k+1}$ for $k\ge0$ we recall Lemma \ref{monotonicity}
(subdifferential monotonicity): if the nodal value $u_h^k(x_i)$ increases, then
$|\partial u_h^k (x_i)|$ decreases and $|\partial u_h^k (x_j)|$
increases for all other nodes $x_j$ ($j \neq i$). With this in mind,
we first set the boundary values one at a time
\[
u_h^1(x_i) = g(x_i)
\quad\forall \, x_i \in\mathcal{N}_h^\partial,
\]
which preserves \eqref{initial-subd} and the convexity of $u_h^1$.
If a convex nodal function $u_h^k$ has been computed, we
now describe how to construct $u_h^{k+1}$ upon increasing the internal
nodal values $u_h^k(x_i)$ one at a time. Having determined
$u_h^{k+1} (x_j)$ for $j<i$, we thus define $u_h^{k+1} (x_i)$
to be the largest value so that
\[
|\partial u_h^{k+1} (x_i)| = f_i
\quad\forall \, x_i\in\Nhi,
\]
provided $u_h^{k+1} (x_j) = u_h^k (x_j)$ for $j>i$,
which in turn implies for $j\ne i$
\[
|\partial u_h^{k+1} (x_j)| \ge f_j
\quad\forall \, x_j \in\Nhi.
\]
This is always doable because $|\partial u_h^{k+1}(x_i)|$
decreases continuously with increasing $u_h^{k+1}(x_i)$ until it vanishes,
which corresponds to having a supporting hyperplane at $x_i$ that
touches $u_h^{k+1}$ at $d+1$ nodes distinct from $x_i$ and not lying
in one hyperplane. In addition, since $f_i\ge0$ for all $x_i\in\Nhi$,
the intermediate iterates leading to $u_h^{k+1}$ are always convex by definition.
This process creates a monotone sequence $\{u_h^k\}_{k\in\mathbb{N}}$
of convex nodal functions, namely
$u_h^{k+1}(x_i)\ge u_h^k(x_i)$ for all $x_i\in\Nhi$.
Since this sequence has a uniform upper bound, namely
$u_h^k(x_i) \leq \max_{x_j\in\Nhb} g(x_j)$ for
all $x_i \in \Nhi$ and all $k$, as a consequence of Corollary \ref{MP}
(maximum principle) below, we deduce that the sequence converges
to a nodal function $u_h$.
Next, we show that the limit $u_h$ satisfies \eqref{FEM}.

Taking $\phi(x)$ in Lemma \ref{weakconvergence} (weak convergence of
Monge-Amp\`ere measures) as the hat function
$\phi_i$ associated with $x_i\in\Nhi$ in definition \eqref{fi}, we deduce that
$|\partial u_h^k(x_i)|$ converges and
\begin{align*}
|\partial u_h(x_i)| &=  \int_{\dm}\phi_i(x) d M u_h(x)
\\
& = \lim_{k\to\infty} \int_{\dm}\phi_i(x) d M u_h^k(x)
= \lim_{k\to\infty} | \partial u_h^k(x_i)| \geq f_i,
\end{align*} 
because $Mu_h^k$ is a sum of Dirac measures with mass $|\partial u_h^k(x_i)|$
supported at $x_i\in\Nhi$.
We argue by contradiction. If
$u_h$ does not satisfy \eqref{FEM}, then there exists a node $x_i \in
\Nhi$ such that $|\partial u_h(x_i)| > f_i$ and there exists  $\delta
>0$ such that increasing $u_h(x_i)$ by $\delta$, the subdifferential at
$x_i$ equals $f_i$.
On the other hand, given any $\epsilon>0$,
there exists $k_\epsilon$ such that $0 \le u_h(x_j) - u_h^k(x_j) \leq \epsilon$
for all $k\ge k_\epsilon, x_j\in\Nhi$. 
We define the auxiliary function
$\widetilde{u}^k_h(x_j) := u_h^{k+1}(x_j)$ if $j < i$,
$\widetilde{u}^k_h(x_j) := u_h^{k}(x_j)$ if $j > i$
and $\widetilde{u}^k_h(x_i) := u_h^k(x_i) + \delta -  \epsilon$.
We note that
\[
\widetilde{u}^k_h(x_j) + \epsilon \geq u_h(x_j)
\quad
j \ne i,
\qquad
\widetilde{u}^k_h(x_i) + \epsilon = u_h^k(x_i) + \delta \leq u_h(x_i) + \delta,
\]
whence, applying Lemma \ref{monotonicity}
to $\widetilde{u}^k_h + \epsilon$ and $u_h$ perturbed by $\delta$ at $x_i$
yields
\[
  |\partial \widetilde{u}^k_h(x_i)| \geq f_i.
\]
Therefore,
since $u_h^{k+1}(x_i)$ is the largest value satisfying
$|\partial u_h^{k+1}(x_i)| = f_i$ we deduce
$u_h^{k+1}(x_i) \geq \widetilde{u}^k_h(x_i)
\ge u_h(x_i) + \delta- 2\epsilon > u_h(x_i)$ provided that $\epsilon < \delta/2$.
This contradicts the fact that $u_h^{k+1}(x_i)\le u_h(x_i)$
and proves \eqref{FEM}.

\medskip\noindent
{\bf Computation of subdifferentials.}
Computing $|\partial u_h^k(x_i)|$ is a key step of the algorithm,
which reduces to computing the constant gradients $\nabla\Gamma(u_h^k)|_T$
of the convex envelope $\Gamma(u_h^k)$ of $u_h^k$ for each element
$T$ of the induced mesh $\mathcal{T}_h^k$.
Lemma \ref{char_subdifferential} yields
\[
|\partial u_h^k(x_i)| = \text{measure of polygon with vertices
$\{\gradv \Gamma(u_h^k)|_T\}$ and $x_i\in T$}.
\]
During the iteration the underlying mesh $\mathcal{T}_h^k$ changes,
starting from the Delaunay mesh $\T_h^0$ for $\Nh$,
first for $d=2$ and next for $d=3$. We describe now how these
changes occur and can be implemented.

{\bf Case $d=2$.}
We consider two triangles $T_1,T_2\in\mathcal{T}_h^k$
with vertices $z_1, z_2, z_4$ and $z_2, z_3, z_4$, namely
they are the convex hulls
\[
T_1 = \hull \{z_1, z_2, z_4\},
\quad
T_2 = \hull \{z_2, z_3, z_4\},
\]
and
\[
z_1 = (h, 0),
\quad
z_2 = (0, -h),
\quad
z_3 = (-h, 0),
\quad
z_4 = (0, h).
\]
For $t \in [-1,1]$, we consider the one-parameter convex nodal function
\[
u_h^k(z_i) = 0 \quad i = 1, 2, 3,
\qquad
u_h^k(z_4) = t,
\]
and observe that its Lagrange interpolant
$I_h^k(u_h^k)$ is convex for $-1\le t \le 0$ and concave for $0 \le t \le 1$;
hence $\Gamma(u_h^k) = I_h^k(u_h^k)$ only for $-1\le t \le 0$.
The constant gradients in $T_i$ are
$t\nabla\phi_4$ for $i=1,2$ and the jump on the edge $F_1=[z_2,z_4]$ is given by
\[
\jump{\gradv I_h^k(u_h^k)}|_{F_1} = - \frac{t}{h}.
\]
We see that it is non-negative only for $-1\le t\le 0$, which means
that $I_h^k(u_h^k)$ is convex according to \eqref{sign}. When $t=0$ the
function $I_h^k(u_h^k)$ is linear in $T_1 \cup T_2$ and for $0\le t \le1$
we have to flip the edge $F_1$ to $F_2=[z_1,z_3]$ and consider a new
mesh $\T_h^{k+1}$ upon replacing $T_1,T_2$ with the two new triangles
\[
T_3 = \hull \{z_1,z_2,z_3\},
\quad
T_4 = \hull \{z_1,z_3,z_4\},
\]
for which $\jump{\gradv I_h^{k+1}(u_h^k)}|_{F_2} = \frac{t}{h} \ge 0$ and
$I_h^{k+1}(u_h^k)$ is convex. This example reveals that the connectivity of
the underlying mesh $\mathcal{T}_h^k$ may change as we increase nodal
values $u_h^k(x_i)$. Since
increasing $u_h^k(x_i)$ is equivalent to adding a multiple of $\phi_i^k$
to $u_h^k$, we realize that changes are local and restricted to the
star $\omega_i^k$. They can be monitored on edges within $\omega_i^k$
by simply checking the signs of jumps and flipping edges accordingly.
Jumps on the boundary edges of
$\omega_i$ increase, which is consistent with Lemma \ref{monotonicity},
and require no attention whatsoever. We further see that
the edge flipping process is similar to the construction of Delaunay
meshes for $d=2$.

{\bf Case $d=3$.}
The change of mesh connectivity is more complicated for $d=3$, but
it is still local (within the star $\omega_i^k$) and thus trackable
\cite{Joe91}. To describe this process,
we consider the following setting with five nodes
\[
z_0 = (0,0,-1), \,
z_1 = (1,0,0), \,
z_2 = (1,1,0), \,
z_3 = (0,1,0), \,
z_4 = (0,0,1),
\]
and two configurations for the convex hull of $\{z_0, z_1, z_2, z_3, z_4\}$;
see Figure \ref{fig:twoconfig}. 
The first configuration has two tetrahedra $T_1,T_2$ with one common face $F$
\[
T_1 = \hull \{ z_0, z_1, z_2, z_4\}
\quad
T_2 = \hull \{ z_0, z_2, z_3, z_4\}
\quad
F = \hull \{z_0,z_2,z_4\}.
\]
The second configuration has three tetrahedra $T_1^*,T_2^*,T_3^*$ and three
common faces $F_1^*,F_2^*,F_3^*$
\begin{gather*}
T_1^* = \hull \{ z_4, z_1, z_2, z_3\},
\,
T_2^* = \hull \{ z_0, z_1, z_2, z_3\},
\,
T_3^* = \hull \{ z_0, z_1, z_3, z_4\},
\\
F_1^* = \hull \{z_1, z_2, z_3 \},
\,
F_2^* = \hull \{z_0, z_1, z_3 \},
\,
F_3^* = \hull \{z_1, z_3, z_4 \},
\end{gather*}
whence $F_1^* = T_1^*\cap T_2^*, F_2^* = T_2^* \cap T_3^*, F_3^* = T_3^*\cap T_1^*$.
We will see that perturbing the values of a convex nodal function $u_h^k$,
one configuration switches to the other to keep convexity of $I_h^k (u_h^k)$.

\begin{figure}
\tdplotsetmaincoords{70}{165}
\subfloat[]
{
  \begin{tikzpicture}[scale=2,tdplot_main_coords]
    \coordinate (O) at (0,0,0);


    \tdplotsetcoord{z1}{1}{90}{0}    
    \tdplotsetcoord{z2}{1}{90}{90}   
    \tdplotsetcoord{z3}{1}{90}{180}  
    \tdplotsetcoord{D}{1}{90}{270}  
    \tdplotsetcoord{z4}{2}{30}{270}     
    \tdplotsetcoord{z0}{2}{150}{270}   
    \node at (z0) [right] {$z_0$};
    \node at (z1) [above left] {$z_1$};
    \node at (z2) [above] {$z_2$};
    \node at (z3) [above right] {$z_3$};
    \node at (z4) [above right] {$z_4$};
    \draw (z1) -- (z2) -- (z3);
    \draw (z4) -- (z1) -- (z0);
    \draw (z4) -- (z2) -- (z0);
    \draw (z4) -- (z3) -- (z0);
    \draw[dashed] (D) -- (z2);
    \draw[dashed] (z3) -- (D) -- (z1);
    \draw[dashed](z4) -- (D) -- (z0);
  \end{tikzpicture}
}
\subfloat[]
{
  \begin{tikzpicture}[scale=2,tdplot_main_coords]
    \coordinate (O) at (0,0,0);


    \tdplotsetcoord{z1}{1}{90}{0}    
    \tdplotsetcoord{z2}{1}{90}{90}   
    \tdplotsetcoord{z3}{1}{90}{180}  
    \tdplotsetcoord{D}{1}{90}{270}  
    \tdplotsetcoord{z4}{2}{30}{270}     
    \tdplotsetcoord{z0}{2}{150}{270}   
    \node at (z0) [right] {$z_0$};
    \node at (z1) [above left] {$z_1$};
    \node at (z2) [above] {$z_2$};
    \node at (z3) [above right] {$z_3$};
    \node at (z4) [above right] {$z_4$};
    \draw (z1) -- (z2) -- (z3);
    \draw (z4) -- (z1) -- (z0);
    \draw (z4) -- (z2) -- (z0);
    \draw (z4) -- (z3) -- (z0);
    \draw[dashed] (z1) -- (z3);
    \draw[dashed] (z3) -- (D) -- (z1);
    \draw[dashed](z4) -- (D) -- (z0);
  \end{tikzpicture}
}
\caption{Two conforming partitions of the convex hull of
$z_0, z_1, z_2, z_3, z_4$: the first configuration
contains two tetrahedra $T_1 = \hull \{ z_0, z_1, z_2, z_4\}$ and
 $T_2 = \hull \{ z_0, z_2, z_3, z_4\}$, whereas the second one consists of
three tetrahedra
$T_1^* = \hull \{ z_4, z_1, z_2, z_3\}$, $T_2^* = \hull \{ z_0, z_1, z_2, z_3\}$,
and $T_3^* = \hull \{ z_0, z_1, z_3, z_4\}$.}
\label{fig:twoconfig}
\end{figure}
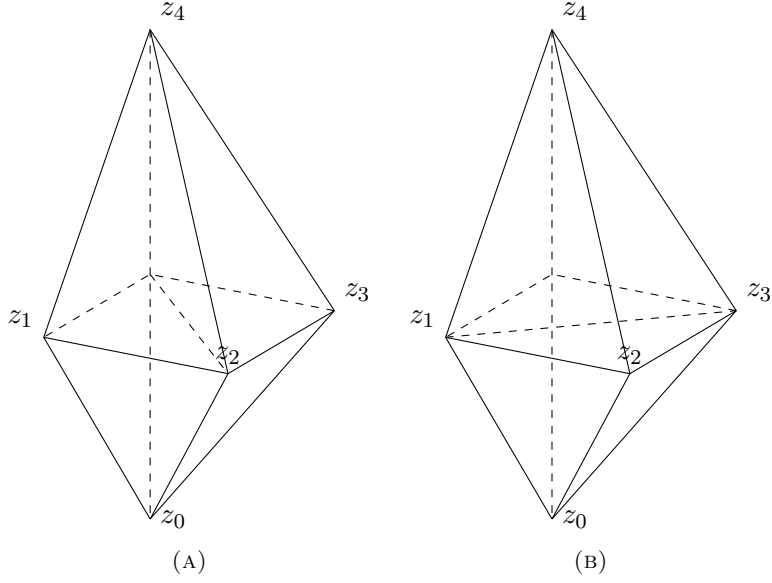

{\it Case I.} We first describe a transition from the first to the second
configuration.
Let $u_h^k$ be the following nodal function 
\[
u_h^k(z_0) = 0, \quad u_h^k(z_1) = u_h^k(z_2) = u_h^k(z_3) = 1, \quad u_h^k(z_4) = 2 +t.
\]
A simple computation yields 
\[
\gradv I_h^k(u_h^k)|_{T_1} = (0,0,1) + (-1, 0, 1) \frac t 2,
\quad
\gradv I_h^k(u_h^k)|_{T_2} = (0,0,1) + (0, -1, 1) \frac t 2.
\]
Since the unit normal vector to $F$ is
$\vf n|_{T_1} = - \vf n|_{T_2} = \frac {\sqrt 2}{2}(-1, 1, 0)$, the jump on $F$ reads
$\jump{\gradv I_h^k(u_h^k)}|_F = -\frac {\sqrt 2}{2} t$ according to \eqref{jump-def},
and changes sign as $t$ increases from $-1$ to $1$.
To preserve the convexity of $I_h^k(u_h^k)$ for $-1\le t<0$,
we switch to the second configuration for $0<t\le 1$. We thus get gradients
\begin{align*}
\gradv I_h^k(u_h^k)|_{T_1^*} = &\; (0,0,1) + (0,0,1) t ,
\\
\gradv I_h^k(u_h^k)|_{T_2^*} = &\; (0,0,1) ,
\\
\gradv I_h^k(u_h^k)|_{T_3^*} = &\; (0,0,1) + (-1,-1,1)\frac{t}{2},
\end{align*}
unit normals
\[
\vf n_1|_{F_1^*}=(0,0,-1), ~
\vf n_2|_{F_2^*}=\frac{1}{\sqrt{3}} (-1,-1,1), ~
\vf n_3|_{F_3^*}=\frac{1}{\sqrt{3}}(1,1,1),
\]
and jumps
\[
\jump{\gradv I_h^k(u_h^k)}|_{F_1^*} = t,
\quad
\jump{\gradv I_h^k(u_h^k)}|_{F_2^*} = \frac{\sqrt{3}}{2} t,
\quad
\jump{\gradv I_h^k(u_h^k)}|_{F_3^*} = \frac{\sqrt{3}}{2} t.
\]
Therefore, the function $I_h^k(u_h^k)$ is convex in
$T_1 \cup T_2$ for $t<0$ and
$T_1^* \cup T_2^* \cup T_3^*$ for $t>0$. The function $I_h^k(u_h^k)$
is linear for $t=0$.

{\it Case II.}
We next describe a transition from the second to the first configuration
upon increasing one nodal value. If
\[
u_h(z_0) = 0,
\quad
u_h(z_1) = u_h(z_2) = 2,
\quad 
u_h(z_3) = 2+t,
\quad 
u_h(z_4) = 4,
\] 
then it is easy to check that 
\begin{align*}
\gradv I_h^k(u_h^k) |_{T_1^*} =&\; (0,0, 2) + (-1, 0, -1) t
\\
\gradv I_h^k(u_h^k) |_{T_2^*} =&\; (0,0, 2) + (-1, 0, 1) t
\\
\gradv I_h^k(u_h^k) |_{T_3^*} =&\; (0,0, 2) + (0, 1, 0) t
\end{align*}
and 
\[
\jump{\gradv I_h^k(u_h^k)} |_{F_1^*} = -2t,
~
\jump{\gradv I_h^k(u_h^k)} |_{F_2^*} = -\sqrt 3 t,
~
\jump{\gradv I_h^k(u_h^k)} |_{F_3^*} = -\sqrt 3 t.
\]
Therefore, $I_h^k(u_h^k)$ is convex in $T_1^* \cup T_2^* \cup T_3^*$ for $t<0$
but not for $t>0$. On the other hand, we see that
\begin{align*}
\gradv I_h^k(u_h^k)|_{T_1} = (0,0,2),
\quad
\gradv I_h^k(u_h^k)|_{T_2} = (0,0,2) + (-1,1,0)t,
\end{align*}
and the jump of the interelement face $F$ is
\begin{align*}
\jump{\gradv I_h^k(u_h^k)} |_F = \sqrt{2}t ,  
\end{align*}
whence $I_h^k(u_h^k)$ is convex in $T_1 \cup T_2$ for $t>0$.
This concludes the discussion.

In the rest of the paper,  we focus on estimating the rates of
convergence. Such an estimate relies on the stability and consistency
of \eqref{FEM}, which we address in sections \ref{S:stability}
and \ref{S:consistency}, respectively.

%
\section{Stability}\label{S:stability}
%

Given two nodal functions $v_h$ and $w_h$ we control the
$L^\infty$-norm of $(v_h-w_h)^-$ in terms of the discrepancy of their
subdifferential measures. This is the content of Proposition \ref{stability}
and its proof is the chief goal of this section.
This result hinges on two important estimates, the
discrete Alexandroff estimate and the Brunn-Minkowski inequality,
which we discuss next. 

\subsection{Discrete Alexandroff Estimate}\label{S:discrete-Alexandroff}
The Alexandroff estimate for a continuous piecewise affine
function $v_h$ states that the $L^{\infty}$-norm of $v_h$ is
controlled by the Lebesgue measure of its subdifferential.
We refer to \cite{NochettoZhang} for a complete proof, 
and also to
\cite{KuoTrudinger90, KuoTrudinger92, KuoTrudinger96, KuoTrudinger00} for
similar estimates and for discrete Alexandroff Bakelman Pucci estimates
for general fully nonlinear elliptic problems.

\begin{lemma}[discrete Alexandroff estimate]\label{L:Alexandroff}
  Let $v_h$ be a nodal function and $v_h(x_i) \geq 0$ at all $x_j \in \Nhb$.
  Then
\begin{align}\label{alex}
  \sup_{\dm} v_h^- \leq C \left( \sum_{x_i \in \mathcal{C}^-_h(v_h) } \abs {\partial v_h (x_i)} \right)^{1/d},
\end{align}
where $C = C(d, \Omega)$ is proportional to the diameter of $\Omega$
and $\mathcal{C}^-_h(v_h)$ is the contact set defined in \eqref{contactset}.
\end{lemma}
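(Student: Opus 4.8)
The plan is to reduce the discrete Alexandroff estimate to the classical continuous Alexandroff estimate applied to the convex envelope $\Gamma(v_h)$. First, I would observe that it suffices to prove the inequality for the function $w := \Gamma(v_h)$, the lower convex envelope of $v_h$ extended as a piecewise linear function over the mesh $\Th$ it induces. Indeed, $w$ is convex, $w \le v_h$ at all nodes, and $w = v_h$ precisely on the contact set $\mathcal{C}^-_h(v_h)$ by \eqref{extension} and \eqref{contactset}; moreover $w \ge 0$ on $\bdry$ since $v_h \ge 0$ there and $w$ is the convex envelope. Hence $\sup_\dm v_h^- = \sup_\dm w^-$ (the negative part is unaffected by passing to the convex envelope, because $w \le v_h$ forces $w^- \ge v_h^-$, and on the other hand the minimum of $w$ is attained at a contact node where $w = v_h$). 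So the left-hand side is exactly $\sup_\dm w^-$.

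Next I would invoke the classical Alexandroff estimate for the convex function $w$ on $\dm$: $\sup_\dm w^- \le C(d,\dm)\, |\partial w(\dm)|^{1/d}$, where $C$ is proportional to $\diam \dm$ (see \cite{Gutierrez01}). It then remains to bound $|\partial w(\dm)|$ by $\sum_{x_i \in \mathcal{C}^-_h(v_h)} |\partial v_h(x_i)|$. Here I would use Lemma \ref{char_subdifferential}: the subdifferential $\partial w$ at any point of $\dm$ is, on each element $K$ of $\Th$, the single gradient $\gradv w|_K$, and $\partial w$ at an interior node $x_i$ is the convex hull of the element gradients $\gradv w|_K$ over $K \ni x_i$. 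Consequently $\partial w(\dm)$ is covered by the union $\bigcup_{x_i \in \Nhi} \partial w(x_i)$ (every element gradient belongs to the subdifferential at each of that element's vertices, and at least one vertex is interior for elements meeting the interior). The key point is then that a supporting hyperplane of $w$ that touches only at nodes \emph{not} in the contact set cannot contribute a new subgradient: if $\gradv w|_K$ is realized as a subgradient at node $x_i$, that same affine function is a supporting hyperplane of $v_h$ at $x_i$, forcing $x_i \in \mathcal{C}^-_h(v_h)$ by Lemma \ref{L:disc-subd}. Hence $\partial w(\dm) \subset \bigcup_{x_i \in \mathcal{C}^-_h(v_h)} \partial w(x_i) = \bigcup_{x_i \in \mathcal{C}^-_h(v_h)} \partial v_h(x_i)$, and subadditivity of Lebesgue measure gives $|\partial w(\dm)| \le \sum_{x_i \in \mathcal{C}^-_h(v_h)} |\partial v_h(x_i)|$. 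Combining the three displays yields the claim.

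The main obstacle I anticipate is making the middle step—the covering of $\partial w(\dm)$ by subdifferentials at contact nodes—rigorous at the boundary and at the non-smooth skeleton of $\Th$. One must check that points on faces and lower-dimensional strata of $\Th$, and points near $\bdry$ where $w$ may equal $0$, do not introduce subgradients outside $\bigcup_{x_i} \partial w(x_i)$; this is handled by noting $\partial w$ is set-valued only on the skeleton, where it is the convex hull of the gradients of adjacent elements, each of which is captured at a vertex, and that boundary behavior only helps since $w^- = 0$ near $\bdry$. A secondary technical point is the precise dependence of the constant $C$ on $\diam\dm$, which is inherited verbatim from the continuous Alexandroff estimate and requires no new work. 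Full details appear in \cite{NochettoZhang}; here we have only sketched the structure.
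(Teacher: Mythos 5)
First, note that the paper does not actually prove this lemma: it is quoted from \cite{NochettoZhang} (``We refer to \cite{NochettoZhang} for a complete proof''), so there is no in-paper argument to compare against; what follows assesses your sketch on its own terms. Your overall strategy (pass to $w=\Gamma(v_h)$, apply the continuous Alexandroff estimate, then cover the relevant subgradients by subdifferentials at contact nodes) is a reasonable route, but the two justifications you give for its key steps are incorrect as written. (a) You claim $w\ge 0$ on $\bdry$ ``since $v_h\ge 0$ there and $w$ is the convex envelope''; this is backwards, since the envelope lies \emph{below} the nodal data, and the boundary control is genuinely delicate: $\Gamma(v_h)$ is defined through affine minorants of the nodal values, so off the convex hull of $\Nh$ it degenerates, on the hull its boundary values are governed by a separation argument (for strictly convex $\dm$ the only points of $\bdry$ in the hull are boundary nodes), and interior nodes lying in the thin crescent between the hull of the boundary nodes and $\bdry$ can be extreme points of the hull carrying negative values. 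None of this is ruled out by the hypotheses, so the applicability of the continuous ABP estimate (which needs $w\ge 0$, or at least $\liminf w\ge0$, on the boundary of the set where you apply it) requires an argument you have not supplied. (b) Your covering claim $\partial w(\dm)\subset\bigcup_{x_i\in\mathcal{C}^-_h(v_h)}\partial w(x_i)$, justified by ``at least one vertex is interior for elements meeting the interior,'' is false as a set inclusion: the mesh induced by $\Gamma(v_h)$ can have faces meeting $\dm$ all of whose vertices are boundary nodes, and their subgradients need not appear at any interior contact node. What saves the \emph{measure} inequality is a dimension count: on the relative interior of a $k$-face with $k\ge1$ the subdifferential of $w$ lies in an affine subspace of dimension $d-k$ and hence has measure zero, so positive measure is contributed only by vertices, and vertices lying in the open set $\dm$ are precisely interior contact nodes (where one must also check $\partial w(x_i)=\partial v_h(x_i)$, which holds at contact nodes but is not Lemma \ref{L:disc-subd} verbatim, since $v_h$ need not be convex). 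Neither of these repairs is in your text, so as it stands the proof has genuine gaps at both of its load-bearing steps.

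For comparison, the standard (and simpler) argument, which is the one behind \cite{NochettoZhang}, avoids the continuous ABP entirely: assume $m:=\min_{x_i\in\Nh}v_h(x_i)<0$ and fix any slope $p$ with $|p|<|m|/\diam\dm$; minimizing $v_h(x_j)-p\cdot x_j$ over all nodes, the hypothesis $v_h\ge0$ at boundary nodes together with the slope bound forces the minimizer $x^*$ to be an interior node, and by construction $p\in\partial v_h(x^*)$, which in turn implies $\Gamma(v_h)(x^*)=v_h(x^*)$, i.e.\ $x^*\in\mathcal{C}^-_h(v_h)$. Hence the ball of radius $|m|/\diam\dm$ is contained in $\bigcup_{x_i\in\mathcal{C}^-_h(v_h)}\partial v_h(x_i)$, and subadditivity of Lebesgue measure gives \eqref{alex} with $C$ proportional to $\diam\dm$. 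If you prefer your envelope-based route, you should replace the full covering of $\partial w(\dm)$ by exactly this cone/contact-set restriction (only slopes of size $|m|/\diam\dm$ are needed, and these are captured at interior contact nodes), which simultaneously disposes of the boundary issue in (a).
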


\subsection{Brunn-Minkowski Inequality}\label{S:brumm-minkowski}
The second main tool to prove Proposition \ref{stability} is the
celebrated Brunn-Minkowski inequality \cite{Gardner02}.
This inequality relates the Lebesgue measures of compact subsets
$A,B$ of Euclidean space $\mathbb{R}^d$ with that of their Minkowski sum $A+B$
defined in \eqref{minkowski-sum}.
\begin{lemma}[Brunn-Minkowski inequality]\label{BM}
  Let $A$ and $B$ be two nonempty compact subsets of $\mathbb R^d$ for $d \geq 1$. Then the following inequality holds:
\[
  |A + B|^{1/d} \ge |A|^{1/d} + |B|^{1/d}.
\]
\end{lemma}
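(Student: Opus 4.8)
The plan is to follow the classical Hadwiger--Ohmann strategy, establishing the inequality successively for boxes, for finite unions of boxes, and then for arbitrary nonempty compact sets.

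\textbf{Boxes.} First I would treat the case $A=\prod_{i=1}^d[0,a_i]$ and $B=\prod_{i=1}^d[0,b_i]$; since translating $A$ or $B$ changes neither side, this covers all boxes. Then $A+B=\prod_{i=1}^d[0,a_i+b_i]$, and after discarding degenerate boxes (for which the inequality is immediate) the claim is equivalent to
\[
\Big(\prod_{i=1}^d \tfrac{a_i}{a_i+b_i}\Big)^{1/d}+\Big(\prod_{i=1}^d \tfrac{b_i}{a_i+b_i}\Big)^{1/d}\le 1.
\]
Bounding each product by the arithmetic mean of its factors and adding gives $\tfrac1d\sum_{i=1}^d\big(\tfrac{a_i}{a_i+b_i}+\tfrac{b_i}{a_i+b_i}\big)=1$, as desired.

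\textbf{Finite unions of boxes.} Next I would prove the inequality when $A$ and $B$ are finite unions of boxes with pairwise disjoint interiors, by induction on the total number of boxes; the box case is the base case. If $A$ contains at least two boxes, two of them have disjoint interiors, hence some coordinate hyperplane $\{x_j=t\}$ has one of them in $\{x_j\le t\}$ and the other in $\{x_j\ge t\}$. Setting $A^\pm:=A\cap\{\pm(x_j-t)\ge 0\}$, each $A^\pm$ is a finite union of boxes with fewer boxes than $A$ (at worst the straddling boxes get split), while the analogous splitting of $B$ by a parallel hyperplane yields finite unions of boxes $B^\pm$; thus $(A^+,B^+)$ and $(A^-,B^-)$ have strictly fewer boxes in total than $(A,B)$. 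Since $s\mapsto|B\cap\{x_j\le s\}|$ is continuous, I can choose the splitting of $B$ so that $|A^+|/|A|=|B^+|/|B|=:\theta$ (the case $|A||B|=0$ being immediate from $a+B\subset A+B$). Now $A^++B^+\subset\{x_j\ge t+s\}$ and $A^-+B^-\subset\{x_j\le t+s\}$ have disjoint interiors and both lie in $A+B$, whence by the inductive hypothesis
\[
|A+B|\ \ge\ |A^++B^+|+|A^-+B^-|\ \ge\ \big(|A^+|^{1/d}+|B^+|^{1/d}\big)^d+\big(|A^-|^{1/d}+|B^-|^{1/d}\big)^d .
\]
Since $|A^+|^{1/d}+|B^+|^{1/d}=\theta^{1/d}\big(|A|^{1/d}+|B|^{1/d}\big)$ and likewise with $1-\theta$ for the minus parts, the right-hand side collapses to $\big(|A|^{1/d}+|B|^{1/d}\big)^d$.

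\textbf{Compact sets.} Finally I would pass to nonempty compact $A,B$ through bounded open sets. Any nonempty bounded open set is an increasing union of finite unions of boxes whose measures converge to its own (dyadic decomposition), so the union-of-boxes case applied to such exhaustions yields the inequality for all bounded open sets. For compact $A,B$ put $A^\delta:=\{x:\dist(x,A)<\delta\}=A+B_\delta(0)$ and likewise $B^\delta$; these are bounded open with $|A^\delta|\to|A|$ and $|B^\delta|\to|B|$ as $\delta\downarrow 0$ (continuity of Lebesgue measure from above, since $A,B$ are closed), and $A^\delta+B^\delta=A+B+B_{2\delta}(0)=(A+B)^{2\delta}$, whose measure tends to $|A+B|$ because $A+B$ is compact. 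Applying the open-set inequality to $A^\delta,B^\delta$ and letting $\delta\to 0$ gives the claim. I expect this last passage to be the main subtlety: one cannot inner-approximate a compact set by finite unions of boxes (a fat Cantor set has positive measure but empty interior), so the approximation must be routed through open $\delta$-enlargements, exploiting that Minkowski addition commutes with such enlargements.
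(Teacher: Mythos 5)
Your argument is correct: it is the classical Hadwiger--Ohmann proof (boxes via AM--GM, induction on the number of boxes with a measure-balancing hyperplane split, then passage to general sets), and each step is sound, including the two points that usually cause trouble --- the strict decrease of the box count for each of the pairs $(A^+,B^+)$, $(A^-,B^-)$, and the final limiting argument routed through open $\delta$-enlargements $A^\delta=A+B_\delta(0)$ rather than inner approximation by boxes, which indeed fails for compact sets of positive measure and empty interior. The comparison with the paper is simply that the paper offers no proof at all: Lemma~\ref{BM} is quoted as a known result with a reference to Gardner's survey \cite{Gardner02}, where essentially the argument you give appears. So your proposal supplies a self-contained elementary proof of a cited black-box ingredient; nothing in it conflicts with how the lemma is used in the paper (in particular the consequence \eqref{BM-byproduct} and the stability estimate of Proposition~\ref{stability} only need the inequality as you prove it, for the compact convex subdifferential sets arising there).
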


Since $(a+b)^t \le a^t+b^t$ for $a,b\ge0$ and $0<t<1$, we deduce the
following immediate consequence of Lemma \ref{BM}
\begin{equation}\label{BM-byproduct}
|A + B| \ge |A| + |B|.
\end{equation}

\subsection{Continuous Dependence}\label{S:proof-stab}

We now compare two arbitrary nodal functions in terms of
  their subdifferentials. This is instrumental for the error analysis.

\begin{prop}[continuous dependence]\label{stability}
Let $v_h$ and $w_h$ be two nodal functions associated with nodes $\Nh$ and $v_h \geq w_h$ at all $x_i \in \Nhb$. Then \looseness=-1
\[
  \sup (v_h - w_h)^- \leq C \left( \sum_{x_i \in \mathcal{C}_h^-(v_h - w_h)} \Big( \abs{ \partial v_h (x_i)}^{1/d} - \abs{ \partial w_h (x_i)}^{1/d} \Big )^d  \right)^{1/d},
\]
where $C = C(d, \dm)$ is proportional to the diameter of $\dm$.
\end{prop}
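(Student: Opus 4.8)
The plan is to reduce the estimate to a nodewise comparison of subdifferential measures and then invoke Brunn--Minkowski. Set $z_h := v_h - w_h$, so that $z_h \ge 0$ at every boundary node and $\sup (v_h - w_h)^- = \sup z_h^-$. Applying the discrete Alexandroff estimate (Lemma \ref{L:Alexandroff}) to $z_h$ gives
\[
\sup z_h^- \le C \Big( \sum_{x_i \in \mathcal{C}_h^-(z_h)} |\partial z_h(x_i)| \Big)^{1/d}, \qquad C = C(d,\dm),
\]
and since $\mathcal{C}_h^-(z_h) = \mathcal{C}_h^-(v_h - w_h)$ the whole statement follows once we establish the nodewise bound
\[
|\partial z_h(x_i)| \le \Big( |\partial v_h(x_i)|^{1/d} - |\partial w_h(x_i)|^{1/d} \Big)^d
\qquad \text{for each } x_i \in \mathcal{C}_h^-(z_h).
\]

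For this nodewise bound I would first observe that the computation in the proof of Lemma \ref{Msum} never uses convexity: adding the inequalities that define $p \in \partial z_h(x_i)$ and $q \in \partial w_h(x_i)$ shows $p + q \in \partial(z_h + w_h)(x_i) = \partial v_h(x_i)$, hence
\[
\partial z_h(x_i) + \partial w_h(x_i) \subseteq \partial v_h(x_i).
\]
Next I would record that each subdifferential here is compact: it is closed and convex by definition, and bounded because $x_i$ is an interior node of the convex domain $\dm$ and is therefore surrounded by nodes pointing in a set of directions that positively span $\mathbb{R}^d$. Also $\partial z_h(x_i) \ne \emptyset$ exactly because $x_i$ lies in the contact set, so $z_h$ has a supporting hyperplane there, whose gradient belongs to $\partial z_h(x_i)$. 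Provided $\partial w_h(x_i) \ne \emptyset$ as well, Lemma \ref{BM} applied to $A = \partial z_h(x_i)$ and $B = \partial w_h(x_i)$ gives
\[
|\partial v_h(x_i)|^{1/d} \ge |\partial z_h(x_i) + \partial w_h(x_i)|^{1/d} \ge |\partial z_h(x_i)|^{1/d} + |\partial w_h(x_i)|^{1/d};
\]
rearranging — both sides of the resulting inequality are nonnegative — and raising to the $d$-th power yields precisely the nodewise bound. Substituting it into the Alexandroff estimate and recalling $\sup z_h^- = \sup(v_h - w_h)^-$ closes the argument.

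The step I expect to require the most care is the hypothesis $\partial w_h(x_i) \ne \emptyset$ at the contact nodes of $z_h$: if $w_h$ has no supporting hyperplane at such an $x_i$, then the Minkowski sum above is empty, Brunn--Minkowski is vacuous, and the nodewise bound degenerates into the claim $|\partial z_h(x_i)| \le |\partial v_h(x_i)|$, which is not automatic for arbitrary nodal functions and has to be supplied by an extra structural assumption on $w_h$. This difficulty disappears when $w_h$ is convex, since then $\partial w_h(x_i) = \partial \Gamma(w_h)(x_i) \ne \emptyset$ at every interior node by Lemma \ref{L:disc-subd} together with the nonemptiness noted after \eqref{subdifferential_node}; and in every application of this result in the paper the two functions being compared ($u_h$ and $N_h u$ with $u$ convex) are convex, so the clean argument above applies verbatim. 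A minor bookkeeping point worth stating explicitly is that the index set produced by Lemma \ref{L:Alexandroff} is already the contact set $\mathcal{C}_h^-(v_h - w_h)$ appearing in the proposition, since $z_h = v_h - w_h$.
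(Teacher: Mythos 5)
Your argument is correct and follows essentially the same route as the paper: the discrete Alexandroff estimate applied to $v_h-w_h$, the Minkowski-sum inclusion $\partial(v_h-w_h)(x_i)+\partial w_h(x_i)\subset\partial v_h(x_i)$ at contact nodes, and the Brunn--Minkowski inequality, the only cosmetic difference being that the paper passes through $\partial\Gamma(v_h-w_h)(x_i)\subset\partial(v_h-w_h)(x_i)$ before summing. Your closing caveat that $\partial w_h(x_i)$ must be nonempty at the contact nodes (guaranteed when $w_h$ is convex, as in every application in the paper) is a fair observation that the paper's proof tacitly assumes as well.
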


\begin{proof}
Let $v_h,w_h$ be two arbitrary nodal functions.
We consider the convex envelope $\Gamma (v_h - w_h)$ defined in
\eqref{convexenvelope} and the nodal contact set $\mathcal{C}^-_h (v_h- w_h)$
defined in \eqref{contactset}.
Lemma \ref{alex} (discrete Alexandroff estimate) yields 
  \begin{align}\label{stability-alexandroff}
    \sup_{\dm} ( v_h - w_h)^- \le C \left( \sum_{x_i \in \mathcal{C}^-_h(v_h - w_h)} |\partial  \Gamma (v_h - w_h)(x_i) | \right)^{1/d},
  \end{align}
whence we only need to estimate $|\partial  \Gamma (v_h - w_h)(x_i) |$
for all $x_i \in \mathcal{C}^-_h(v_h - w_h)$.
For these nodes, we easily see that
\[
\partial\Gamma(v_h - w_h)(x_i) \subset \partial(v_h - w_h)(x_i).
\]
Consequently, Lemma \ref{Msum} (addition of subdifferentials) gives 
\[
\partial w_h(x_i) + \partial \Gamma (v_h - w_h)(x_i) \subset \partial v_h(x_i)
\quad\forall \, x_i \in \mathcal{C}^-_h(v_h - w_h).
\]
Applying Lemma \ref{BM} (Brunn-Minkowski inequality), we obtain 
\begin{align*}
  |\partial w_h(x_i)|^{1/d} &+ |\partial \Gamma (v_h - w_h)(x_i)|^{1/d} 
  \\
  & \leq
  |\partial w_h(x_i) + \partial \Gamma (v_h - w_h)(x_i) |^{1/d} 
  \leq 
  | \partial v_h(x_i)|^{1/d} ,
\end{align*}
whence
\begin{align*}
  |\partial \Gamma (v_h - w_h)(x_i)| \leq &\; \left(| \partial v_h(x_i)|^{1/d} - |\partial w_h(x_i)|^{1/d}   \right)^{d} .
\end{align*}
This inequality gives us the desired estimate for $ |\partial \Gamma
(v_h - w_h)(x_i)|$. In view of \eqref{stability-alexandroff}, adding
over all $x_i \in \mathcal{C}^-_h (v_h - w_h)$ concludes the proof.
\end{proof}

A direct consequence of this stability result is the maximum principle
for nodal functions, which we state next.
\begin{corollary}[discrete maximum principle]\label{MP}
Let $v_h$ and $w_h$ be two nodal functions associated with nodes $\Nh$.
If $v_h(x_i) \geq w_h(x_i)$ at all $x_i \in \Nhb$ and $|\partial v_h(x_i)| \leq |\partial w_h(x_i)|$ at all $x_i \in \Nhi$, then 
\[
w_h(x_i) \leq v_h(x_i) \quad \forall x_i \in \Nh.
\]
\end{corollary}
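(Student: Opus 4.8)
The plan is to read off Corollary \ref{MP} from Proposition \ref{stability}. Since $v_h \ge w_h$ at every boundary node, the hypotheses of Proposition \ref{stability} hold for the pair $(v_h,w_h)$, and its proof in fact produces the sharper intermediate bound
\[
\sup_\Omega (v_h - w_h)^- \le C \left( \sum_{x_i \in \mathcal{C}_h^-(v_h - w_h)} |\partial \Gamma(v_h - w_h)(x_i)| \right)^{1/d},
\]
obtained by applying the discrete Alexandroff estimate (Lemma \ref{L:Alexandroff}) to $v_h - w_h$. So it suffices to show that $|\partial \Gamma(v_h - w_h)(x_i)| = 0$ for every node $x_i$ in the contact set $\mathcal{C}_h^-(v_h - w_h)$.

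To this end I would revisit the Brunn--Minkowski step in the proof of Proposition \ref{stability}: for $x_i \in \mathcal{C}_h^-(v_h - w_h)$ the inclusion $\partial w_h(x_i) + \partial \Gamma(v_h - w_h)(x_i) \subset \partial v_h(x_i)$ (Lemma \ref{Msum}) together with Lemma \ref{BM} yields
\[
|\partial w_h(x_i)|^{1/d} + |\partial \Gamma(v_h - w_h)(x_i)|^{1/d} \le |\partial v_h(x_i)|^{1/d}.
\]
Now I would invoke the hypothesis $|\partial v_h(x_i)| \le |\partial w_h(x_i)|$ at interior nodes: substituting $|\partial v_h(x_i)|^{1/d} \le |\partial w_h(x_i)|^{1/d}$ into the last display forces $|\partial \Gamma(v_h - w_h)(x_i)|^{1/d} \le 0$, hence $|\partial \Gamma(v_h - w_h)(x_i)| = 0$. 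Plugging this back into the intermediate bound gives $\sup_\Omega (v_h - w_h)^- = 0$, i.e. $(v_h - w_h)^- \equiv 0$ on $\Omega$; evaluating at the nodes then yields $w_h(x_i) \le v_h(x_i)$ for all $x_i \in \Nh$.

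There is essentially no obstacle here: the corollary is the degenerate instance of the continuous-dependence estimate of Proposition \ref{stability} in which the subdifferential discrepancy has an unfavorable sign, and all the machinery — the discrete Alexandroff estimate, the Minkowski-sum inclusion, and Brunn--Minkowski — is already in place. The only point deserving a word of care is that, on the contact set, the quantity $|\partial v_h(x_i)|^{1/d} - |\partial w_h(x_i)|^{1/d}$ is always bounded below by $|\partial \Gamma(v_h - w_h)(x_i)|^{1/d} \ge 0$, so when the hypothesis makes this difference nonpositive it must in fact vanish, and the right-hand side of the stability estimate collapses to zero.
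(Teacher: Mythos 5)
Your proof is correct and takes essentially the same route as the paper: the paper likewise treats the corollary as the degenerate case of Proposition \ref{stability}, using the Minkowski-sum inclusion on the contact set $\mathcal{C}^-_h(v_h-w_h)$ to conclude $|\partial v_h(x_i)| = |\partial w_h(x_i)|$ there and hence that the right-hand side of the stability estimate vanishes. Your variant, which unwinds the proof of the proposition and shows directly via Brunn--Minkowski that $|\partial \Gamma(v_h-w_h)(x_i)|=0$ on the contact set, is just a reorganization of the same argument.
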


\begin{proof}
For any node $x_i\in \mathcal{C}^-_h (v_h - w_h)$, we have
\[
\partial w_h(x_i) \subset \partial v_h(x_i).
\]
Since $|\partial v_h(x_i)| \leq |\partial w_h(x_i)|$ for all $x_i \in\Nhi$, we
deduce $|\partial v_h(x_i)| = |\partial w_h(x_i)|$ for all
$x_i\in \mathcal{C}^-_h (v_h - w_h)$. Consequently, Proposition
\ref{stability} (continuous dependence) implies
\[
\sup (v_h - w_h)^- = 0,
\]
whence $v_h - w_h \geq 0$. This completes the proof.
\end{proof}

\begin{remark}[uniqueness]
If two nodal functions $u_h$ and $w_h$ are both solutions of \eqref{FEM},
then by Corollary \ref{MP} (discrete maximum principle), 
\[
\sup (u_h - w_h)^- = 0 
\quad \mbox{and} \quad 
\sup (w_h - u_h)^- = 0.
\]
Hence, we infer that $u_h = w_h$ at all nodes. 
This shows uniqueness. 
\end{remark}

%
\section{Consistency}\label{S:consistency}
%

In this section, we examine the consistency of the Oliker-Prussner
method \eqref{FEM}. In general, this method is consistent in the sense
that the right hand side of the \eqref{FEM} can be written
equivalently as $\sum_{x_i \in \Nh} f_i \delta_{x_i}$ and this converges to $f$
in measure. 
However, such a concept of convergence is too weak to derive rates of convergence. 
Fortunately, we realize that if internal nodes are translation invariant,
then a reasonable notion of operator consistency holds for 
any convex quadratic polynomial; see Lemma \ref{L:consistency}.
Such property is shown in \cite{BenamouCollinoMirebeau16,Mirebeau15}
for Cartesian nodes. In contrast, we give here an alternative
proof of consistency based on the geometric interpretation of
subdifferentials of convex quadratic polynomials
in the interior of the domain, extend the
results to $C^{2,\alpha}$ and $W^s_p$ functions, and further investigate the
consistency error in the region close to the boundary.

Lemma \ref{char_subdifferential} (characterization of subdifferential)
states that the subdifferential of a convex nodal function $p_h$ at
node $x_i\in\Nhi$ is the convex hull of piecewise constant gradients
of its convex envelope $\Gamma(p_h)$, which in turn is determined
by the nodal values $p_h(x_j)$ in the adjacent set $A_i(p_h)$ of $x_i$
for $p_h$.
The following lemma gives an estimate of the size of $A_i(p_h)$.
\begin{lemma}[size of adjacent sets]\label{estimate_adjacent_set}
Let the nodal set $\Nh$ be quasi-uniform
and shape-regular with constant $\sigma$. 
Let $p$ be a $C^2$ convex function defined in $\dm$. If $ \lambda I
\leq D^2 p \leq \Lambda I$ in $\dm$ for some constants $\lambda, \Lambda>0$
and $p_h:=N_h p$ is the nodal function associated with $p$ defined in
\eqref{nodal-function}, then 
the adjacent set of nodes $A_i(p_h)$ at $x_i$ for $p_h$ 
satisfies
\[
  A_i(p_h) \subset B_{Rh}(x_i) 
\]
where $R = \frac{\Lambda}{\lambda} \sigma^2$
and $B_{Rh}(x_i)$ is the ball centered at $x_i$ with radius $Rh$.
\end{lemma}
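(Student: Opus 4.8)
The plan is to exploit the two‑sided quadratic control on $p$ coming from $\lambda I \le D^2p \le \Lambda I$ together with the elementary geometric fact that, for a quasi‑uniform and shape‑regular nodal set, the nodes of $\Nh$ nearest to $x_i$ surround $x_i$ in every direction. Since $p$ is convex, $p_h:=N_hp$ is a convex nodal function (the affine map $x\mapsto p(x_i)+\gradv p(x_i)\cdot(x-x_i)$ supports it at $x_i$), so $A_i(p_h)$ makes sense. Fix $x_j\in A_i(p_h)$; by definition there is a supporting hyperplane $L$ of $p_h$ at $x_i$ with $L(x_j)=p_h(x_j)$. Subtracting the affine function $x\mapsto p(x_i)+\gradv p(x_i)\cdot(x-x_i)$ from $p$ — which alters neither $A_i(p_h)$ nor the Hessian bounds — we may assume $p(x_i)=0$ and $\gradv p(x_i)=0$, so Taylor's formula and convexity of $\dm$ give
\[
\tfrac{\lambda}{2}|x-x_i|^2 \le p(x) \le \tfrac{\Lambda}{2}|x-x_i|^2
\qquad\text{for all } x\in\overline{\dm}.
\]
Writing $L(x)=v\cdot(x-x_i)$ with $v\in\partial p_h(x_i)$ (since $L(x_i)=p_h(x_i)=0$), the supporting property reads $v\cdot(x_k-x_i)\le p(x_k)$ for every node $x_k\in\Nh$, with equality at $x_k=x_j$.

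The first step is to bound $|v|$. Since $\Nh$ is quasi‑uniform and shape‑regular, the star $\omega_i$ of $x_i$ in the original mesh $\T$ is star‑shaped with respect to $x_i$, is contained in $\overline{B_h(x_i)}$, and contains a ball $B_{\alpha h}(x_i)$ with $\alpha=\alpha(\sigma)>0$ (as used in Section \ref{S:soln_ap}). Hence the ray issued from $x_i$ in the direction $e:=v/|v|$ meets $\partial\omega_i$ at a point $z$ with $\alpha h\le|z-x_i|\le h$, lying on a face opposite $x_i$ of some element; expressing $z$ as a convex combination of the nodes $x_k$ spanning that face yields a node $x_k$ with $|x_k-x_i|\le h$ and $e\cdot(x_k-x_i)\ge\alpha h$. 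Combining $v\cdot(x_k-x_i)=|v|\,e\cdot(x_k-x_i)\ge \alpha h\,|v|$ with $v\cdot(x_k-x_i)\le p(x_k)\le\tfrac{\Lambda}{2}|x_k-x_i|^2\le\tfrac{\Lambda}{2}h^2$ gives the gradient bound $|v|\le \tfrac{\Lambda}{2\alpha}h$.

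The second step closes the argument. At $x_j\in A_i(p_h)$ we have equality $v\cdot(x_j-x_i)=p(x_j)\ge\tfrac{\lambda}{2}|x_j-x_i|^2$, while Cauchy–Schwarz and the gradient bound give $v\cdot(x_j-x_i)\le|v|\,|x_j-x_i|\le\tfrac{\Lambda}{2\alpha}h\,|x_j-x_i|$. Hence $\tfrac{\lambda}{2}|x_j-x_i|^2\le\tfrac{\Lambda}{2\alpha}h\,|x_j-x_i|$, i.e. $|x_j-x_i|\le \tfrac{\Lambda}{\lambda\alpha}h=:Rh$ (the case $x_j=x_i$ being trivial), which is the claimed inclusion with $R=R(\lambda,\Lambda,\sigma)$ of the stated form.

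The main obstacle is the bound on $|v|$ in the first step: this is the only place where the discrete structure of $\Nh$ (quasi‑uniformity and shape‑regularity) is genuinely used, and it hinges on the quantitative statement that the immediate neighbors of $x_i$ surround it — equivalently that $0$ lies well inside the convex hull of $\{x_k-x_i\}$ over neighbors $x_k$; everything else is elementary convexity. Note the role of $\lambda>0$: when $D^2p$ degenerates the bound blows up, in agreement with the arbitrarily elongated star of Example \ref{E:anisotropic-star}.
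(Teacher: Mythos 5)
Your proof is correct, and it reaches the conclusion by a somewhat different mechanism than the paper. The paper takes the farthest adjacent node $z$, rescales it by the factor $R$ so that $R^{-1}z$ lies in an element $T$ of the star of $x_i$ with $R^{-1}|z|\ge\rho_T$, writes $z$ as $R$ times a convex combination of the vertices of $T$, and plays the linearity of the supporting plane $L$ (bounded above at the star nodes by $\tfrac12\Lambda h_T^2$) against the lower bound $p(z)\ge\tfrac{\lambda}{2}|z|^2$; this yields $R\le\frac{\Lambda}{\lambda}\sigma_T^2$, i.e.\ exactly the constant announced in the statement. You instead first bound the norm of the touching subgradient, $|v|\le\frac{\Lambda}{2\alpha}h$, by locating a star node well aligned with $v$ (using $B_{\alpha h}(x_i)\subset\omega_i\subset\overline{B_h(x_i)}$, the same geometric property of shape-regular stars the paper itself invokes in Section \ref{S:soln_ap}), and then combine Cauchy--Schwarz with $p(x_j)\ge\tfrac{\lambda}{2}|x_j-x_i|^2$ at the adjacent node. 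The ingredients are identical (upper quadratic bound at the immediate neighbors versus lower quadratic growth at the far node, mediated by the supporting hyperplane), but your route produces the reusable byproduct $|v|\lesssim\Lambda h$ for every $v\in\partial p_h(x_i)$, whereas the paper's rescaling argument delivers the explicit constant of the statement. The one caveat: your radius is $R=\frac{\Lambda}{\lambda\alpha}$, which is not literally $\frac{\Lambda}{\lambda}\sigma^2$ (and $\alpha$ depends on quasi-uniformity as well as $\sigma$), so as written you prove the inclusion with a possibly larger, less explicit constant; this is harmless for all later uses of the lemma, but to match the statement verbatim you would need to verify $\alpha\ge\sigma^{-2}$ or simply restate $R$ as a constant $R(\lambda,\Lambda,\sigma)\ge1$.
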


\begin{proof}
Let $z$ be an adjacent node of $x_i$ such that 
\[
  |z - x_i| = \max \{ |x_j - x_i|: \, x_j \in A_i(p_h) \} .
\]
Without loss of generality, we may assume that  $x_i = 0$, $p(x_i)=0$
and $\gradv p(x_i) = 0$ and set $x_0 = x_i$.
Let $\omega_0$ be a star at $x_0$ in mesh $\T_h$ associated with nodal set $\Nh$. 
If $z \in \omega_0$, then the assertion is trivial because  $R \geq 1$. 

If $z\notin\omega_0$, then we may assume that
there is a constant $R\ge1$ such that 
$R^{-1} z \in T$ for
some element $T \subset \omega_0$, which implies that $|z| \leq Rh_T$ and
$R^{-1}|z|\ge\rho_T$.
If $\{x_k\}_{k=0}^d$ are the vertices of simplex $T$, then we write
\[
z = R \sum_{k=0}^d \alpha_k x_k,
\qquad
\alpha_k \geq 0,
\qquad
\sum_{k=0}^d \alpha_k =1.
\]
We next note that $p(x_k) \leq \frac 12 \Lambda h_T^2$ for all $1\le k
\le d$ because $D^2 p \leq \Lambda I$ and $|x_k| \leq h_T$. Since $z
\in A_i(p_h)$, there exists a supporting hyperplane $L$ at $x_0$
such that
\[
L(z) = p_h(z),
\qquad
L(x_k) \leq p_h(x_k) \leq \frac 12 \Lambda h_T^2.
\]
Exploiting that $L$ is linear yields
\[
 p_h(z) = R \sum_{k=0}^d \alpha_k L(x_k) \leq \frac 12 \Lambda h_T^2 R
\]
On the other hand, since $D^2 p \geq \lambda I$ and $|z| \geq R\rho_T
= R h_T \sigma_T^{-1} $, we have
\[
p_h(z) = p(z) \geq \frac {\lambda}2 |z|^2 \geq \frac {\lambda}{2} R^2 \sigma_T^{-2} h_T^2.
\]
Combining the last two inequalities implies
\[
  R \leq \frac{\Lambda}{\lambda}\sigma_T^2 \leq \frac{\Lambda}{\lambda}\sigma^2.
\]
This completes the proof.
\end{proof}

Lemma \ref{estimate_adjacent_set} (size of adjacent sets) shows
that $\partial u_h(x_i)$ does not depend on values of $u_h$
on the boundary $\partial\Omega$
for nodes $x_i$ such that $\textrm{dist}(x_i, \bdry_h) \geq R h$.
We now consider nodes which are $Rh$ away from $\partial\Omega_h$ and
gather several properties of subdifferentials of convex quadratic
polynomials. \looseness=-1
\begin{lemma}[properties of convex quadratic polynomials]\label{properties}
Let $p$ be a convex quadratic polynomial such that $\lambda I \leq
D^2p \leq \Lambda I$ and $p_h:=N_h p$ be the nodal function defined
in \eqref{nodal-function}.
If $R =\frac{\Lambda}{\lambda}\sigma^2$,
then the following properties hold:
\\
$\bullet$
The subdifferential $\partial p_h(x_i)$ is a non-empty set for all $x_i\in\Nh$.
\\
$\bullet$ 
If the nodal set $\Nhi$ is translation invariant and
${\rm dist}(x_i, \bdry_h) \geq R h$ for $x_i\in\Nhi$, then a uniform
refinement $\NR$ of $\Nh$ satisfies
\[
    \abs{\partial p_h(x_i)} = 2^d \abs{\partial p_{\frac h 2} (x_i) }.
\]
  $\bullet$ 
  If the nodal set $\Nhi$ is translation invariant and
  ${\rm dist}(x_i, \bdry_h) \geq Rh$ and ${\rm
    dist}(x_j, \bdry_h) \geq R h$ for $x_i,x_j\in\Nhi$, then
  $A_i(p_h) = (x_i-x_j) + A_j(p_h)$ and 
  \[
    \abs{\partial p_h(x_i)} = \abs{\partial p_h (x_j) }.
  \]
\end{lemma}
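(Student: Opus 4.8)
The first bullet is immediate. Since $p$ is convex and of class $C^2$, its tangent hyperplane $L(x):=p(x_i)+\gradv p(x_i)\cdot(x-x_i)$ satisfies $L\le p$ on $\dm$, so $L(x_j)\le p(x_j)=p_h(x_j)$ for all $x_j\in\Nh$ while $L(x_i)=p_h(x_i)$; hence $L$ supports $p_h$ at $x_i$ and $\gradv p(x_i)\in\partial p_h(x_i)\ne\emptyset$. For the remaining two bullets I would exploit only two structural facts about a quadratic polynomial: modulo an affine function, $p$ is invariant under translation, and modulo an affine function it is homogeneous of degree two, so $p(x_i+2y)-4\,p(x_i+y)$ is affine in $y$. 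These I combine with the elementary identity $\partial(p_h+\ell)(x)=\gradv\ell+\partial p_h(x)$ for affine $\ell$, read off directly from \eqref{subdifferential_node}: adding an affine function to $p_h$ merely translates the subdifferential, hence preserves $\abs{\partial p_h(x)}$, and leaves the adjacent set $A_i(p_h)$ unchanged.

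\textbf{Third bullet.}
Put $\tau:=x_i-x_j$. Because $\Nhi$ is translation invariant and $\dist(x_i,\bdry),\dist(x_j,\bdry)\ge Rh$, the balls $B_{Rh}(x_i),B_{Rh}(x_j)$ lie in $\dm$, every lattice point they contain is an interior node, and $y\mapsto y+\tau$ is a bijection of $\Nh\cap B_{Rh}(x_j)$ onto $\Nh\cap B_{Rh}(x_i)$. By Lemma \ref{estimate_adjacent_set}, together with Lemma \ref{char_subdifferential} and the observation following Lemma \ref{estimate_adjacent_set} that the subdifferential at a node at distance $\ge Rh$ from $\bdry$ is insensitive to the configuration outside $B_{Rh}$, both $A_i(p_h)$ and $\partial p_h(x_i)$ depend only on the values of $p_h$ on $\Nh\cap B_{Rh}(x_i)$; likewise at $x_j$. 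Since $p$ is quadratic, $\ell(y):=p(y+\tau)-p(y)$ is affine, so on $\Nh\cap B_{Rh}(x_j)$ the nodal function $p_h$ equals the $\tau$-translate of the restriction of $p_h$ to $\Nh\cap B_{Rh}(x_i)$, minus $N_h\ell$. Applying the translation invariance of $p$ modulo affine functions together with the affine-shift identity then yields $A_i(p_h)=\tau+A_j(p_h)=(x_i-x_j)+A_j(p_h)$ and $\abs{\partial p_h(x_i)}=\abs{\partial p_h(x_j)+\gradv\ell}=\abs{\partial p_h(x_j)}$.

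\textbf{Second bullet.}
We may take $x_i=0$, and after subtracting the affine function $y\mapsto p(0)+\gradv p(0)\cdot y$, which alters neither $\abs{\partial p_h(0)}$ nor $\abs{\partial p_{h/2}(0)}$, assume $p(y)=\tfrac12\,y^{\top}D^2p\,y$, so that $p(2y)=4p(y)$ and $p_h(0)=0$. Let $\NR$ be the refinement with basis $\{e_j\}$ and spacing $h/2$, so $\Nh\subset\NR$ and the dilation $y\mapsto 2y$ maps $\NR\cap B_{R(h/2)}(0)$ bijectively onto $\Nh\cap B_{Rh}(0)$ (here $\dist(0,\bdry)\ge Rh$ keeps both balls inside $\dm$). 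Using Lemma \ref{estimate_adjacent_set} to restrict both subdifferentials to these balls and writing $y=2x$, one has $v\in\partial p_h(0)$ iff $p(2x)\ge 2v\cdot x$ for every $x\in\NR\cap B_{R(h/2)}(0)$, iff $p(x)\ge\tfrac{v}{2}\cdot x$ for every such $x$ (using $p(2x)=4p(x)$), iff $\tfrac{v}{2}\in\partial p_{h/2}(0)$. Hence $\partial p_{h/2}(0)=\tfrac12\,\partial p_h(0)$, and taking $d$-dimensional Lebesgue measures gives $\abs{\partial p_h(x_i)}=2^{d}\,\abs{\partial p_{h/2}(x_i)}$.

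\textbf{Main obstacle.}
The one genuinely delicate ingredient, used in both bullets above, is the localization: for a node $x_i$ with $\dist(x_i,\bdry)\ge Rh$, both $A_i(p_h)$ and $\partial p_h(x_i)$ are determined by the nodal values on $\Nh\cap B_{Rh}(x_i)$ alone. This rests on Lemma \ref{estimate_adjacent_set} — the simplices of the induced mesh meeting $x_i$ have all their vertices in $A_i(p_h)\subset B_{Rh}(x_i)$ — combined with Lemma \ref{char_subdifferential}, which identifies $\partial p_h(x_i)$ with the convex hull of the corresponding constant gradients. Once this is granted, the rest is routine bookkeeping with translations and dilations of a quadratic; and it is precisely here that the hypotheses $\dist(x_i,\bdry)\ge Rh$ (and $\dist(x_j,\bdry)\ge Rh$) enter, namely to keep every translated and dilated ball inside $\dm$ so that all the nodes entering the argument are genuine interior nodes.
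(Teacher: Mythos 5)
Your proof is correct and follows essentially the same route as the paper: subtract the tangent plane (an affine shift that changes neither subdifferential measures nor adjacent sets), use degree-two homogeneity of the reduced quadratic for the dilation identity in the second bullet, and the fact that a quadratic differs from its translate by an affine function for the third. The only real difference is that you make the localization to $B_{Rh}(x_i)$ explicit; just note that the reverse direction of your ``iff'' (a plane supporting $p_h$ at the nodes inside $B_{Rh}(x_i)$ is globally supporting) does not follow verbatim from the statements of Lemmas \ref{estimate_adjacent_set} and \ref{char_subdifferential}, but from a short quantitative repetition of the argument proving Lemma \ref{estimate_adjacent_set} using $\lambda I\leq D^2p\leq\Lambda I$ and $R=\frac{\Lambda}{\lambda}\sigma^2$ --- a point the paper's own proof leaves equally implicit.
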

\begin{proof}
  Take $x_i=0$ for simplicity.
  To prove the first assertion, we just observe that if  $L$ is
  the tangent plane touching $p$ from below at $0$, then $L$ is a
  lower supporting hyperplane of $p_h$ at $0$. This implies that $\gradv L =\gradv p(0)$
  is in the subdifferential of $p_h$ at $0$.

To prove the second assertion, we consider the auxiliary polynomial
\[
q^i(x) := p(x) - \gradv p(0) \cdot x - p(0),
\]
obtained by subtracting the tangent plane of $p$ at $0$. 
Since adding an affine function does not change the measure of the
subdifferential, we have $|\partial p_h(0)| = |\partial q^i_h(0)|$.
Using that $q^i$ is homogeneous of degree $2$ yields
\[
q^i(x) = 4 q^i \Big(\frac x 2\Big). 
\]
Since
\[
4 q_{\frac{h}{2}}^i \Big(\frac{x_j}{2}\Big) = 4 q^i \Big(\frac{x_j}{2}\Big)
= q^i(x_j) = q_h^i(x_j) \ge v \cdot x_j = 2v \cdot
\frac{x_j}{2}
\]
for all $x_j\in\Nhi$ and $v\in\partial q_h^i(0)$, we deduce
\[
  \partial q_h^i(0) = 2 \partial q_{\frac h 2}^i (0),  
\]
whence
$\abs {\partial q_h^i(0)} = 2^d \abs{\partial q_{\frac h 2}^i (0)}$
and the second assertion follows.

To prove the third assertion,
we write $q^i(x) = (x-x_i)^t Q (x-x_i)$ for a suitable positive definite constant matrix $Q$. This implies
\[
q^i(x) = q^j(x+x_j-x_i)
\qquad\forall \, x\in\mathbb{R}^d
\]
along with
\[
A_i(q^i_h) = (x_i-x_j) + A_j(q^j_h),
\qquad
|\partial q^i_h(x_i)| = |\partial q^j_h(x_j)|.
\]
Since $A_i(p_h) = A_i(q^i_h)$ and $\partial p_h(x_i) = \partial q^i_h(x_i)$,
this concludes the proof.
\end{proof}

Now we are ready to prove the consistency of \eqref{FEM}.

\begin{lemma}[consistency of the discrete \MA measure]\label{L:consistency}
Let $p$ be a convex quadratic polynomial such that $\lambda I \leq
D^2p \leq \Lambda I$ and $p_h:=N_h p$ be the corresponding convex nodal
function defined in \eqref{nodal-function}.
Let $\Nhi$ be translation invariant
and $\Th$ be a mesh with nodes $\Nh$ and translation invariant
basis of piecewise linear functions $\{\phi_i\}_{i=1}^n$.
Then
\[
  | \partial p_h(x_i) | = \int_{\dm} \phi_i(x) \det D^2 p(x) dx 
\]
for any point $x_i \in \Nhi$ such that {\rm dist}$(x_i,\bdry_h) \geq Rh$
and $ R= \frac{\Lambda}{\lambda}\sigma^2$. 
\end{lemma}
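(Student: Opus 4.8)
The plan is to compute both sides of the asserted identity in closed form and match them. Since $p$ is a convex quadratic, $D^2p\equiv 2Q$ is a constant positive definite matrix, so $\det D^2 p\equiv c:=\det(2Q)=2^d\det Q>0$ and the right-hand side is just $c\int_\dm\phi_i$. As $\dist(x_i,\bdry)\ge Rh\ge h$, the basis function $\phi_i$ is supported in $B_h(x_i)\subset\dm$, hence $\int_\dm\phi_i=\int_{\mathbb R^d}\phi_i$. The node lattice $\mathcal N:=\{h\sum_{j=1}^d k_je_j:k_j\in\mathbb Z\}$ of \eqref{translation-invariant} has exactly one point per fundamental cell, of volume $V_0:=h^d|\det E|$ with $E:=[e_1|\cdots|e_d]$; using the translation invariance \eqref{meshinvariant} of the basis together with the partition of unity $\sum_i\phi_i\equiv1$, a density count over a large ball gives $\int_{\mathbb R^d}\phi_i=V_0$. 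Thus the right-hand side equals $c\,V_0=(\det D^2p)\,h^d|\det E|$, and it remains to show $|\partial p_h(x_i)|=c\,V_0$.

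Turning to the left-hand side, I first replace $p_h$ by the nodal function $\widetilde p_h:=N_{\mathcal N}p$ of $p$ over the full lattice $\mathcal N$. By Lemma~\ref{estimate_adjacent_set} and the discussion following it, $\dist(x_i,\bdry)\ge Rh$ forces $\partial p_h(x_i)$ to depend only on the nodal values in $B_{Rh}(x_i)\subset\dm$, a region where $\Nh$ coincides with $\mathcal N$, so $\partial p_h(x_i)=\partial\widetilde p_h(x_i)$. Subtracting the tangent plane of $p$ at $x_i$ as in the proof of Lemma~\ref{properties}, and using that $q^i(x):=p(x)-p(x_i)-\nabla p(x_i)\cdot(x-x_i)=(x-x_i)^tQ(x-x_i)$ is homogeneous quadratic about $x_i$ while $\mathcal N-x_i=\mathcal N$, one obtains $\partial\widetilde p_h(x_i)=\nabla p(x_i)+P_0$, where $P_0:=\{v\in\mathbb R^d:y^tQy\ge v\cdot y\ \text{for all }y\in\mathcal N\}$ is a fixed bounded convex polytope independent of $i$; this upgrades the third bullet of Lemma~\ref{properties} from measures to sets. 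Since $\nabla p$ is affine, $\mathcal G:=\nabla p(\mathcal N)=\nabla p(0)+(2QhE)\mathbb Z^d$ is a lattice of covolume $|\det(2QhE)|=(\det D^2p)\,V_0$, so the left-hand side equals $|P_0|$ and it suffices to prove $|P_0|=\mathrm{covol}(\mathcal G)$.

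The heart of the argument is that the translates $\{v+P_0:v\in\mathcal G\}=\{\partial\widetilde p_h(x_i):x_i\in\mathcal N\}$ tile $\mathbb R^d$. For the covering, fix $\xi\in\mathbb R^d$ and let $\widetilde\gamma_h:=\Gamma(\widetilde p_h)$; since $\widetilde p_h$ is convex (as $p$ is convex), $\widetilde\gamma_h$ agrees with $p$ at the nodes and dominates $p$ by Jensen on each simplex of its induced mesh, so $\widetilde\gamma_h(x)-\xi\cdot x\to+\infty$ as $|x|\to\infty$, attains its minimum on a face of that mesh, and choosing a vertex $x_i$ of that face yields $\xi\in\partial\widetilde\gamma_h(x_i)=\partial\widetilde p_h(x_i)$ by Lemma~\ref{L:disc-subd}. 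For the almost-everywhere disjointness, if $\xi\in\partial\widetilde p_h(x_i)\cap\partial\widetilde p_h(x_j)$ with $x_i\ne x_j$, the two supporting affine functions of $\widetilde\gamma_h$ at $x_i$ and $x_j$ have common gradient $\xi$ and lie below $\widetilde\gamma_h$; were they distinct, one would strictly dominate the other, contradicting that each touches $\widetilde\gamma_h$ at its node, so they coincide, giving the single linear relation $\xi\cdot(x_j-x_i)=\widetilde\gamma_h(x_j)-\widetilde\gamma_h(x_i)$ and confining $\xi$ to a hyperplane depending only on $x_i,x_j$. Hence almost every point of $\mathbb R^d$ lies in exactly one translate $v+P_0$, $v\in\mathcal G$; integrating this over a fundamental domain $F$ of $\mathcal G$ and using that $\{F-v:v\in\mathcal G\}$ partitions $\mathbb R^d$ up to a null set gives $|P_0|=|F|=\mathrm{covol}(\mathcal G)=(\det D^2p)\,V_0$, which equals the right-hand side and finishes the proof.

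I expect the tiling step of the last paragraph to be the main obstacle — specifically, the coercivity/minimization argument behind the covering and the parallel-supporting-hyperplane argument behind the disjointness — together with the routine but essential care needed to justify that $\partial p_h(x_i)$ depends only on the nodal data in $B_{Rh}(x_i)$ and to keep track of the two lattice covolumes.
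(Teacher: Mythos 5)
Your argument is correct, but it takes a genuinely different route from the paper. The paper proves Lemma \ref{L:consistency} by dyadic refinement: it applies Lemma \ref{weakconvergence} (weak convergence of \MA measures) to the convex envelopes $\gamma_k=\Gamma(N_{h_k}p)$, and then uses the scaling and translation properties of Lemma \ref{properties} to show that the discrete quantity $\sum_{x_j}\phi_i(x_j)\abs{\partial p_k(x_j)}$ is independent of the refinement level, hence equal to its limit $\int_\dm\phi_i\det D^2p$. You instead compute both sides in closed form: extending the nodal set to the full lattice $\mathcal N=hE\mathbb Z^d$, you identify $\partial \widetilde p_h(x_i)=\nabla p(x_i)+P_0$ with $P_0$ independent of $i$, show that the translates of $P_0$ over the gradient lattice $\mathcal G=\nabla p(\mathcal N)$ tile $\mathbb R^d$ up to a null set (coercivity gives covering, the parallel-supporting-plane argument gives a.e.\ disjointness), and conclude $\abs{\partial p_h(x_i)}=\mathrm{covol}(\mathcal G)=\det D^2p\,h^d\abs{\det E}$, while the partition of unity and \eqref{meshinvariant} give $\int_\dm\phi_i=h^d\abs{\det E}$. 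This is closer in spirit to the power-diagram/tiling viewpoint of \cite{Mirebeau15,BenamouCollinoMirebeau16} than to the paper's proof; it is more elementary (no weak convergence of \MA measures is needed) and more explicit (it identifies the common value of both sides), at the price of having to construct the periodic extension of the mesh and lattice. The two steps you flag as routine are indeed the ones needing care, and they are of the same nature as steps the paper itself leaves implicit: (i) the reduction $\partial p_h(x_i)=\partial\widetilde p_h(x_i)$ for ${\rm dist}(x_i,\bdry)\ge Rh$ requires the quantitative locality argument of Lemma \ref{estimate_adjacent_set} (bound $v-\nabla p(x_i)$ by the nearest-neighbor constraints, then use the quadratic growth of $p$ to get all far constraints for free), with the same constant bookkeeping the paper performs there --- the paper's own proof of the second bullet of Lemma \ref{properties} relies on exactly this kind of constraint-sufficiency; and (ii) the identity $\int_{\mathbb R^d}\phi_i=h^d\abs{\det E}$ should be justified not by an asymptotic density count inside the fixed domain (for fixed $h$ that only gives an approximation) but exactly, by extending the translation-invariant mesh periodically and integrating the partition of unity over one fundamental cell --- which parallels the paper's implicit use of $\int_\dm\phi_i^k=2^{-kd}\int_\dm\phi_i$ and $\phi_i\in\mathrm{span}\{\phi_j^k\}$. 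With those two points written out, your proof is complete.
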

\begin{proof}
We consider a sequence of uniform refinements $\Tk$ of $\T_h$ and corresponding
nodes $\Nk$ with $h_k = 2^{-k}h$ for $k\ge1$. Let $p_k = N_{h_k}p$ be
the nodal function of $p$ associated with the set $\Nk$ and let
$\gamma_k=\Gamma(p_k)$ be its convex envelope.
Since $\phi_i$ is compactly supported in $\Omega_h\subset\Omega$
and $\gamma_k\to p$ uniformly as $k\to\infty$, Lemma \ref{weakconvergence}
(weak convergence of Monge-Amp\`ere measures) yields
\[
\int_\Omega \phi_i(x) dM \gamma_k (x) \rightarrow
\int_\Omega \phi_i(x) dM p(x)
\quad \text{ as $k \rightarrow \infty$},
\]
or equivalently
\[
\sum_{x_j \in \Nk} \phi_i(x_j) \abs{\partial p_k(x_j)} \rightarrow
\int_{\dm} \phi_i(x) \det D^2 p(x) dx  
  \quad \text{ as $k \rightarrow \infty$. }
\]
Therefore, we only need to prove 
  \[
    \sum_{x_j \in \Nk} \phi_i(x_j) \abs{\partial p_k(x_j)} =  \abs{\partial p_h(x_i)},
  \]
which is independent of $k$.
Since $\int_{\dm} \phi^k_i = 2^{-kd} \int_{\dm} \phi_i$ and
dist$(x_i,\partial\Omega_h)\ge Rh$,
Lemma \ref{properties} (properties of convex quadratic polynomials) leads to
\[
  \abs{\partial p_k(x_j)} = \abs{\partial p_k(x_i)} = \abs{\partial p_h(x_i)}
  \frac{\int_{\dm} \phi^k_i}{\int_{\dm} \phi_i}
  \qquad\forall \, k\ge1,
\]
where $\{\phi_j^k\}$ is the basis of hat functions over $\Tk$.
Consequently, we obtain
\begin{align*}
  \sum_{x_j \in \Nk} \phi_i(x_j) \abs{\partial p_k(x_j)} = 
  \frac {\abs{\partial p_h(x_i)}}{\int_{\dm} \phi_i} \sum_{x_j \in \Nk}
  \phi_i(x_j) \int_{\dm}  \phi_j^k
  =  \abs{\partial p_h(x_i)} 
\end{align*}
because $\int_\dm \phi_i^k = \int_\dm \phi_j^k$ according to
\eqref{meshinvariant} and
$
\sum_{x_j \in \Nk}  \phi_i(x_j)  \phi_j^k  =   \phi_i,
$
or equivalently $\phi_i \in$ span $\{\phi_j^k\}_{x_j\in\N_k}$.
This completes the proof.
\end{proof}

Since $M=\det D^2 p(x)$ is constant for all $x\in\Omega$, Lemma
\ref{L:consistency} (consistency of the discrete \MA measure) implies that
$\int_\Omega \phi_i(x) dx = M^{-1} |\partial p_h(x_i)|$ for $x_i\in\Nhi$
is independent of the mesh $\Th$ supporting the translation invariant basis
$\{\phi_i\}_{i=1}^n$, and so is the notion of consistency in Lemma \ref{L:consistency}.
This is critical because both $\Th$ and $\{\phi_i\}_{i=1}^n$ might not be unique.
The following two local and quantitative consistency estimates
of Proposition \ref{P:Holder-interior} are a consequence of 
Lemma \ref{L:consistency} for nodes away from $\partial\Omega$.

\begin{prop}[interior consistency]
\label{P:Holder-interior}
Let $\Nhi$ be a translation invariant set of nodes, $\Th$ be a mesh with nodes $\Nh$
and translation invariant basis of piecewise linear functions
$\{\phi_i\}$.
Let $u \in C^{2, \alpha}(\overline{B_i})$ be a convex function so that
$\lambda I \leq D^2 u \leq \Lambda I$ in the
ball $B_i:=B_{Rh}(x_i)$ centered at node $x_i\in\Nhi$ and radius $Rh$
with $R =\frac{\Lambda}{\lambda}\sigma^2 $.
If $x_i\in\Nhi$ satisfies ${\rm dist}(x_i, \bdry_h) \geq Rh$, then 
\[
\left| \abs{\partial N_h u (x_i)} - \int_{\dm} \phi_i(x)\det
D^2 u(x) dx \right| \leq C h^{\alpha} |u|_{C^{2,\alpha}(\overline{B_i})}
\int_{\dm} \phi_i(x) dx,
\]
where $C=C(d,\lambda,\Lambda)$ and
$N_h u$ denotes the convex nodal function associated with $u$ defined
in \eqref{nodal-function}. If instead $u \in W^s_q(B_i)$ with
$s-\frac{d}{q}>2$, $s\le 3$,
then there is again $C=C(d,\lambda,\Lambda)$ such that
\[
 \left| \abs{\partial N_h u (x_i)} - \int_{\dm} \phi_i(x)\det
 D^2 u(x) dx \right| \leq C h^{s-2-\frac{d}{q}} |u|_{W^s_q(B_i)} \int_{\dm} \phi_i(x) dx.
\]
\end{prop}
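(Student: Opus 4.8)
The plan is to freeze the coefficients of $u$ at $x_i$ and reduce the claim to the exact identity of Lemma \ref{L:consistency}. I would introduce the second–order Taylor polynomial of $u$ at $x_i$,
\[
p(x) := u(x_i) + \gradv u(x_i)\cdot(x-x_i) + \tfrac12\,(x-x_i)^{t} D^2u(x_i)(x-x_i),
\]
which is an admissible convex quadratic since $\lambda I \le D^2p = D^2u(x_i) \le \Lambda I$; because $\Nhi$ and $\{\phi_i\}$ are translation invariant and $\dist(x_i,\bdry)\ge Rh$, Lemma \ref{L:consistency} gives $\abs{\partial N_h p(x_i)} = \int_\dm \phi_i\,\det D^2p = \det D^2u(x_i)\int_\dm\phi_i$. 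Writing $K:=|u|_{C^{2,\alpha}(\overline{B_i})}$ and decomposing
\[
\abs{\partial N_h u(x_i)} - \int_\dm\phi_i\det D^2u = \Big(\abs{\partial N_h u(x_i)} - \abs{\partial N_h p(x_i)}\Big) + \int_\dm\phi_i\big(\det D^2u(x_i) - \det D^2u(x)\big),
\]
it remains to bound each summand by $C h^\alpha K\int_\dm\phi_i$ with $C=C(d,\lambda,\Lambda)$.

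The second summand is routine. The support of $\phi_i$ is the star of $x_i$, which lies in $B_i=B_{Rh}(x_i)$; there Hölder continuity gives $|D^2u(x)-D^2u(x_i)|\le K|x-x_i|^\alpha\le K(Rh)^\alpha$, and since $\det$ is Lipschitz on the ball of matrices of norm $\le\Lambda$ with constant $C(d)\Lambda^{d-1}$, we get $|\det D^2u(x)-\det D^2u(x_i)|\le C(d,\Lambda)Kh^\alpha$ on the support of $\phi_i$; integrating against $\phi_i\ge0$ gives the bound.

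The first summand is the crux, and its treatment is the only genuinely delicate point, because the subdifferential $\partial N_h u(x_i)$ is a nonlocal object. To localize it I would sandwich $u$ between the two perturbed quadratics $p^{\pm}(x) := p(x) \pm \tfrac12\,K(Rh)^\alpha\,|x-x_i|^2$. The $C^{2,\alpha}$ Taylor remainder bound $|u(x)-p(x)|\le\tfrac12 K|x-x_i|^{2+\alpha}\le\tfrac12 K(Rh)^\alpha|x-x_i|^2$ gives $p^-\le u\le p^+$ throughout $B_i$, while $D^2p^{\pm}=D^2u(x_i)\pm K(Rh)^\alpha I$ is, for $h$ small, still uniformly convex with ellipticity constants comparable to $\lambda,\Lambda$; hence Lemma \ref{estimate_adjacent_set} applies to $N_h u$ and to $N_h p^{\pm}$ alike (modulo enlarging $B_i$ by a fixed factor, harmless since the regularity of $u$ persists there), so all three subdifferentials at $x_i$ are determined solely by the nodal values inside $B_i$. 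Feeding $p^-\le u\le p^+$ on $\Nh\cap B_i$ into the definition \eqref{subdifferential_node} of the discrete subdifferential then yields $\partial N_h p^-(x_i)\subset\partial N_h u(x_i)\subset\partial N_h p^+(x_i)$, whence
\[
\abs{\partial N_h p^-(x_i)} \le \abs{\partial N_h u(x_i)} \le \abs{\partial N_h p^+(x_i)}.
\]
Applying Lemma \ref{L:consistency} to the admissible quadratics $p^{\pm}$ gives $\abs{\partial N_h p^{\pm}(x_i)} = \det\!\big(D^2u(x_i)\pm K(Rh)^\alpha I\big)\int_\dm\phi_i$, and one last use of the Lipschitz continuity of $\det$ bounds each of these by $\abs{\partial N_h p(x_i)}=\det D^2u(x_i)\int_\dm\phi_i$ up to an error $C(d,\lambda,\Lambda)Kh^\alpha\int_\dm\phi_i$. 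This closes the $C^{2,\alpha}$ estimate; I expect the bookkeeping around Lemma \ref{estimate_adjacent_set}, i.e.\ making precise that the purely local sandwich controls the a priori nonlocal subdifferential measures, to be the main obstacle.

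Finally, I would deduce the $W^s_q$ estimate from the $C^{2,\alpha}$ one with $\alpha=s-2-\tfrac dq\in(0,1)$ by a scaling argument. After the affine change of variables $x\mapsto\hat x=(x-x_i)/(Rh)$ mapping $B_i$ onto the unit ball $B_1$, and subtracting $p$ (whose derivatives of order $\ge 3$ vanish, so it contributes nothing to either homogeneous seminorm), the Sobolev embedding $W^s_q(B_1)\hookrightarrow C^{2,\alpha}(\overline{B_1})$ together with a Bramble--Hilbert (Deny--Lions) estimate on $B_1$ — valid because $\hat u-\hat p$ has vanishing derivatives up to order $2$ at the center — gives $|u|_{C^{2,\alpha}(\overline{B_i})}\le C(d,q,s)\,(Rh)^{\,s-d/q-2-\alpha}\,|u|_{W^s_q(B_i)}$. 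For this choice of $\alpha$ the exponent $s-d/q-2-\alpha$ vanishes, so $h^\alpha|u|_{C^{2,\alpha}(\overline{B_i})}\le C h^{\,s-2-d/q}|u|_{W^s_q(B_i)}$, and the second estimate follows from the first.
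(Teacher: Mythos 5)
Your proposal is correct and follows essentially the same route as the paper: compare $u$ with quadratic perturbations of its Taylor polynomial at $x_i$ (the paper uses the one-sided majorant $D^2p=D^2u(x_i)+Ch^\alpha|u|_{C^{2,\alpha}}I$ and says the reverse bound is similar, which is your two-sided sandwich $p^\pm$), invoke Lemma \ref{L:consistency} for the exactness on quadratics together with subdifferential monotonicity, and expand $\det$ to absorb the $O(h^\alpha)$ perturbation; the localization of the comparison to $B_i$ via Lemma \ref{estimate_adjacent_set}, which you flag as the delicate bookkeeping, is handled at the same (implicit) level of detail in the paper. Your scaling/Bramble--Hilbert derivation of $|u|_{C^{2,\alpha}(\overline{B_i})}\le C|u|_{W^s_q(B_i)}$ with $\alpha=s-2-d/q$ simply makes explicit the Sobolev-embedding step the paper cites tersely.
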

\begin{proof}
To prove the first estimate
we only need to show the inequality
\begin{align*}
| \partial N_h u (x_i) | 
\leq \int_{\dm} \phi_i(x) \det D^2 u(x) dx + Ch^{\alpha}
|u|_{C^{2,\alpha}(\overline{B}_i)} \int_\Omega \phi_i(x) dx,
\end{align*}
because the reverse inequality can be derived similarly.

  Since $u \in C^{2, \alpha}(\overline{B}_i)$, we estimate $u$ by a
  quadratic polynomial $p$ so that 
  \[
    u(x) \leq  p(x) \quad \forall x \in B_{Rh}(x_i),
  \]
  where $p(x_i) = u(x_i), \gradv p(x_i) = \gradv u(x_i)$ and
  $D^2 p = D^2 u(x_i) + C h^\alpha |u|_{C^{2,\alpha}(\overline{B}_i)} I$
  with universal constant $C$.
  If $p_h=N_h p$, then
  Lemma \ref{monotonicity} (subdifferential monotonicity) yields
  \[
  |\partial N_h u(x_i)| \leq |\partial p_h(x_i)|.
  \]
  If $\phi_i$ is the hat function over $\Th$ associated with $x_i$,
  it remains to show 
\[
  |\partial p_h(x_i)| \leq \int_{\dm} \phi_i(x) \det D^2 u(x) dx +
  Ch^{\alpha} |u|_{C^{2,\alpha}(\overline{B_i})}\int_\Omega \phi_i(x) dx.
\]
  Since $(\lambda + Ch^{\alpha})I \leq D^2 p \leq (\Lambda + Ch^{\alpha})I$ and
  \[
    \frac{\Lambda + Ch^{\alpha}} {\lambda + Ch^{\alpha}} \leq  \frac {\Lambda}{\lambda} \quad \text{because $\Lambda \geq \lambda$},
  \]
invoking Lemma \ref{L:consistency} (consistency of the discrete
\MA measure), we obtain
\[
| \partial p_h(x_i) | = \int_{\dm} \phi_i(x) \det D^2 p(x) dx 
\]
because this holds for any mesh $\Th$ with nodes $\Nh$ and translation
invariant basis $\{\phi_i\}_{i=1}^n$
provided dist$(x_i,\partial\dm_h) \ge Rh$.
Recalling that $u\in C^{2,\alpha}(\overline{B_i})$, we can write
$D^2 p = D^2 u(x) + E(x)$ for all $x\in \overline{B_i}$, where
$|E(x)| \le C |u|_{C^{2,\alpha}(\overline{B_i})} h^\alpha$.
Writing $\det D^2p = \det D^2 u(x) \det(I + E(x) D^2 u(x)^{-1})$
and using Taylor expansion yields
\[
| \partial p_h(x_i) |
\leq \int_{\dm} \phi_i(x) \det D^2 u(x) dx + Ch^{\alpha}
|u|_{C^{2,\alpha}(\overline{B_i})} \int_\Omega\phi_i(x) dx ,
\]
and concludes the proof of the H\"older estimate. Finally, if
$u \in W^s_q(B_i)$ with $s-\frac{d}{q}>2$, then we resort to the Sobolev
embedding $W^s_q(B_i) \subset C^{2,\alpha}(\overline{B_i})$ with
$0<\alpha=s-2-d/q<1$ and apply the preceding H\"older estimate.
\end{proof}

For nodes close to the boundary, we can no longer exploit the node
translation invariance and we thus get an error of order $1$.
We express this fact as follows.

\begin{lemma}[boundary consistency]\label{estimate_bdry}
Let $\Th$ be a mesh with nodes $\Nh$ and $\{\phi_i\}$ be a basis of
piecewise linear hat functions over $\Th$.
Let $u \in W^2_\infty(B_i)$ be a convex function with
$\lambda I \leq D^2 u \leq \Lambda I$ in the set
$B_i := B_{Rh}(x_i) \cap \dm$ with $R =\frac{\Lambda}{\lambda}\sigma^2 $,
and let $N_h u$ be the convex nodal function associated with $u$.
If $x_i\in\Nhi$ is a node with ${\rm dist}(x_i, \bdry_h) \leq Rh$, then 
\begin{align*}
 \left| \abs{\partial N_h u (x_i)} - \int_{\dm} \phi_i(x) \det D^2 u(x) dx \right|
 \leq C |u|_{W^2_\infty(B_i)}^d\int_\Omega \phi_i(x) dx,
\end{align*}
where the constant $C=C(d,\lambda,\Lambda)$.
This estimate is valid for any $x_i\in\Nhi$.
\end{lemma}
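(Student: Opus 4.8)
The plan is to bound both quantities in the difference separately by quantities of order $\int_\Omega\phi_i$, and then simply add. The key observation is that near the boundary we no longer have translation invariance to our advantage, so we cannot expect the two terms to cancel; instead we just estimate each crudely. First I would bound the integral term: since $\lambda I \le D^2 u \le \Lambda I$ on $B_i$, we have $\lambda^d \le \det D^2 u \le \Lambda^d$ pointwise, hence
\[
0 \le \int_\Omega \phi_i(x)\det D^2 u(x)\,dx \le \Lambda^d \int_\Omega \phi_i(x)\,dx \le C(d,\lambda,\Lambda)\,|u|_{W^2_\infty(B_i)}^d \int_\Omega \phi_i(x)\,dx,
\]
using $\Lambda \le |u|_{W^2_\infty(B_i)}$ (up to a dimensional constant) in the last step.

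The main work is bounding $|\partial N_h u(x_i)|$ by the same kind of expression. Here I would use Lemma \ref{estimate_adjacent_set} (size of adjacent sets): the adjacent set $A_i(p_h)$ with $p_h = N_h p$ and $p$ any convex quadratic with $\lambda I \le D^2 p \le \Lambda I$ agreeing with $u$'s second-order behaviour is contained in $B_{Rh}(x_i)$, but now $B_{Rh}(x_i)$ may stick out of $\Omega$, so the adjacent set only consists of the genuine nodes of $\Nh$ inside $B_{Rh}(x_i)\cap\Omega = B_i$, together with boundary nodes. Construct a quadratic $p$ with $p \ge u$ on $B_i$, $p(x_i)=u(x_i)$, $\gradv p(x_i)=\gradv u(x_i)$, and $D^2 p = D^2 u(x_i) + C|u|_{W^2_\infty(B_i)} I$, exactly as in the proof of Proposition \ref{P:Holder-interior} but with the modulus of continuity of $D^2 u$ replaced by its full $L^\infty$ size (since we only have $W^2_\infty$ regularity, the best "consistency" we can claim is $O(1)$). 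By subdifferential monotonicity (Lemma \ref{monotonicity}), $|\partial N_h u(x_i)| \le |\partial p_h(x_i)|$. Then I would estimate $|\partial p_h(x_i)|$ directly via Lemma \ref{char_subdifferential}: it is the convex hull of the element gradients $\gradv \Gamma(p_h)|_K$ over $K \ni x_i$, and each such gradient is a slope of a secant of $p$ through nodes within $O(Rh)$ of $x_i$, hence $|\gradv \Gamma(p_h)|_K| \le C(\lambda,\Lambda)\,|D^2 p|\,h \le C|u|_{W^2_\infty(B_i)}\,h$. Therefore $\partial p_h(x_i)$ is contained in a ball of radius $C|u|_{W^2_\infty(B_i)}\,h$, so
\[
|\partial p_h(x_i)| \le C(d)\big(|u|_{W^2_\infty(B_i)}\,h\big)^d \le C(d,\lambda,\Lambda)\,|u|_{W^2_\infty(B_i)}^d\, h^d \le C(d,\lambda,\Lambda)\,|u|_{W^2_\infty(B_i)}^d \int_\Omega \phi_i(x)\,dx,
\]
where in the last step I use $\int_\Omega \phi_i \ge c\,h^d$, a consequence of quasi-uniformity and shape-regularity of $\Nh$ (the support of $\phi_i$ contains a ball of radius comparable to $h$).

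Combining the two bounds by the triangle inequality yields the claimed estimate with $C=C(d,\lambda,\Lambda)$. Since no translation-invariance hypothesis is used anywhere, the estimate holds for every interior node $x_i\in\Nhi$, as stated; in particular it can be applied together with Proposition \ref{P:Holder-interior} regardless of whether dist$(x_i,\bdry)$ is large or small. The one point requiring care is the bound $|\gradv \Gamma(p_h)|_K| \le C h\,|D^2 p|$: this is where I would spend a line verifying that every vertex of every element $K\ni x_i$ of the induced mesh lies within distance $CRh$ of $x_i$ (by Lemma \ref{estimate_adjacent_set}, noting that near the boundary the relevant nodes are those of $\Nh$ in $B_i$ plus possibly boundary nodes, all of which are $O(Rh)$-close to $x_i$), so that a secant slope of the quadratic $p$ between two such nodes is controlled by $|D^2 p|$ times their separation. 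I expect this geometric bookkeeping near $\partial\Omega$ to be the only mildly delicate step; everything else is a direct combination of Lemmas \ref{monotonicity}, \ref{char_subdifferential} and \ref{estimate_adjacent_set}.
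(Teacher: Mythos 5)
Your proposal is correct in substance and lands on the same quantitative mechanism as the paper (everything reduces to showing $\abs{\partial N_h u(x_i)}\leq C\,(h\,|u|_{W^2_\infty(B_i)})^d\leq C\,|u|_{W^2_\infty(B_i)}^d\int_\Omega\phi_i$, plus the trivial bound on the integral term), but you reach it by a detour the paper does not take: you insert a quadratic majorant $p\geq u$ with $D^2p=D^2u(x_i)+C|u|_{W^2_\infty(B_i)}I$ and invoke subdifferential monotonicity, mimicking the proof of Proposition \ref{P:Holder-interior}, and only then bound $\abs{\partial p_h(x_i)}$. The paper instead estimates $\partial N_h u(x_i)$ directly: Lemma \ref{estimate_adjacent_set} gives $A_i(N_hu)\subset B_{Rh}(x_i)\cap\dm$, the $W^2_\infty$ bound gives $\gradv\Gamma(N_hu)|_K=\gradv u(x_i)+v_K$ with $|v_K|\leq Ch|u|_{W^2_\infty(B_i)}$ on elements $K\ni x_i$ of the induced mesh, and Lemma \ref{char_subdifferential} then places $\partial N_hu(x_i)$ in a ball of radius $Ch|u|_{W^2_\infty(B_i)}$ centered at $\gradv u(x_i)$; the two one-sided inequalities follow from $\det D^2u\geq 0$ and $\det D^2u\leq C(d)|u|_{W^2_\infty(B_i)}^d$. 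The direct route buys you freedom from any comparison argument, which matters because your monotonicity step is the one genuinely delicate point: the inclusion $\partial N_hu(x_i)\subset\partial p_h(x_i)$ requires $p\geq u$ at \emph{all} nodes of $\Nh$ (or a localization argument through the adjacent sets), whereas you only arrange $p\geq u$ on $B_i$, and Lemma \ref{monotonicity} as stated concerns perturbation of a single nodal value, so it does not literally cover this comparison; this is the same leap the paper itself makes in Proposition \ref{P:Holder-interior}, and it is repairable, but since the boundary lemma admits the direct argument you gain nothing by taking on that burden here. Two smaller points to fix: the bound $|\gradv\Gamma(p_h)|_K|\leq Ch\,|D^2p|$ is only true after subtracting the supporting affine function at $x_i$ (the subdifferential sits in a ball centered at $\gradv u(x_i)$, not at the origin; the measure is unaffected, so say so); and the justification ``$\Lambda\leq|u|_{W^2_\infty(B_i)}$ up to a dimensional constant'' is false in general since $\Lambda$ is merely an upper bound for $D^2u$ --- either bound $\det D^2u\leq C(d)\,|u|_{W^2_\infty(B_i)}^d$ pointwise, or use $|u|_{W^2_\infty(B_i)}\geq\lambda$ and let the constant depend on $\lambda,\Lambda$, which the statement permits. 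Your observation that no translation invariance is used, so the estimate holds for every $x_i\in\Nhi$, matches the paper.
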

\begin{proof}
We proceed as in Proposition \ref{P:Holder-interior} and only show 
\[
| \partial N_h u(x_i) | \leq \int_{\dm}  \phi_i(x) \det D^2 u(x)dx
+ C |u|_{W^2_\infty(B_i)}^d \int_\Omega \phi_i(x) dx.
\]
Since $\lambda I \leq D^2 u \leq \Lambda I$, we have
$A_i(N_h u) \subset B_{Rh}(x_i) \cap \dm $ according to
Lemma \ref{estimate_adjacent_set} (size of adjacent sets).
The $W^2_\infty$-regularity assumption of $u$ gives the following estimate
for the piecewise constant gradient of the convex envelope
$\Gamma(N_h u)$ of $N_h u$
over each element $T$ of the mesh induced by $\Gamma(N_h u)$, not
necessarily $\Th$, and contained in $B_{Rh}(x_i)$
\[
\gradv \Gamma(N_h u)|_T = \gradv u(x_i) + v_T,
\qquad 
|v_T| \le C h |u|_{W^2_\infty(B_i)}.
\]
Applying Lemma \ref{char_subdifferential} (characterization of
subdifferential) we deduce that the convex hull of all $\gradv \Gamma(N_h u)|_T$ 
for $T \ni x_i$, whence $\partial N_h u(x_i)$,
can be bounded by a ball of radius $C h |u|_{W^2_\infty(B_i)}$
centered at $\nabla u(x_i)$. Hence, we arrive at
\begin{align*}
  | \partial N_h u(x_i) | &\leq C |u|_{W^2_\infty(B_i)}^d
  \int_{\dm} \phi_i(x) dx
  \\
  & \leq \int_{\dm}  \phi_i(x) \det D^2 u(x) dx +  C |u|_{W^2_\infty(B_i)}^d
  \int_{\dm} \phi_i(x) dx,
\end{align*}
because $\det D^2u(x)\ge0$ a.e. $x\in\dm$. This completes the proof.
\end{proof}

%
\section{Rates of convergence}\label{S:Rates}
%
			   
Our goal in this section is to establish rates of convergence in the
max-norm for the approximation \eqref{FEM} of the 
\MA equation \eqref{pde}.
We first deal with classical solutions $u\in C^2(\overline{\Omega})$ and next with
non-classical solutions
$u\in C^{1,1}(\overline{\Omega})\setminus C^2(\overline{\Omega})$.
Interior error estimates result from combining the stability and consistency estimates
derived in Sections \ref{S:stability} and \ref{S:consistency}.
Boundary error estimates entail a different approach involving
discrete barrier functions, which we discuss next.

%
\subsection{Discrete Barrier function}\label{S:barrier}
%

Since the consistency estimates of Proposition \ref{P:Holder-interior}
are valid in the interior, we need to treat
the boundary layer
\[
\{x\in\Omega: \textrm{dist} (x,\partial\Omega_h) \le Rh\}
\]
differently. We exploit that $N_h u - u_h=0$ on $\partial\Omega_h$
together with the fact that $N_h u - u_h$ cannot grow too fast from
$\partial\Omega_h$. This is a consequence of the next result.

\begin{lemma}[discrete barrier]\label{barrier}
Let $\dm$ be uniformly convex and $\Nhi$ be translation invariant. 
Given a constant $E > 0$, for each node $x_i \in \Nhi$ with
${\rm dist} (x_i , \bdry_h) \leq R h$,
$R=\frac{\Lambda}{\lambda}\sigma^2$,
there exists a nodal function $b^i_h$ such that 
$
\abs{ \partial b^i_h (x_j) } \geq E  \int_{\dm} \phi_j(x) \; dx   
$
for all $x_j \in \Nhi$, $b^i_h(x_j) \leq 0$ at $x_j \in \Nhb$ and 
\[
  \abs{ b^i_h (x_i) } \leq C R E^{1/d} h ,
\]
provided that $h$ is sufficiently small.
\end{lemma}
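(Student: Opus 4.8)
The plan is to construct the discrete barrier $b^i_h$ explicitly as the nodal function associated with a suitable convex quadratic polynomial, chosen so that its Hessian is large enough to dominate the data bound $E\int_\Omega\phi_j$, and then to use the uniform convexity of $\Omega$ to control its value at $x_i$. First I would use uniform convexity of $\Omega$: at the boundary point $\bar x_i\in\partial\Omega$ nearest to $x_i$ there is an inner tangent ball $B_\rho(x_c)\supset\Omega$ of fixed radius, so that $\Omega$ lies inside $B_\rho(x_c)$ and touches $\partial B_\rho(x_c)$ at $\bar x_i$. I would then set
\[
  q(x) := \frac{\mu}{2}\big(|x-x_c|^2-\rho^2\big),
\]
with $\mu>0$ to be chosen, so that $q\le 0$ on $\partial\Omega$ (indeed on all of $\overline\Omega$), $D^2q=\mu I$, and $\det D^2q=\mu^d$. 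Define $b^i_h := N_h q$, the nodal interpolant of $q$ on $\Nh$; by construction $b^i_h(x_j)=q(x_j)\le 0$ for $x_j\in\partial\Omega$, as required.

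Next I would verify the subdifferential lower bound. Since $D^2q=\mu I$ with $\mu$ constant, Lemma~\ref{L:consistency} (or, for boundary-adjacent nodes, a direct shape-regularity estimate as in the construction of $u_h^0$ in Section~\ref{S:soln_ap}) gives $|\partial b^i_h(x_j)|\ge c\,\mu^d\,|\omega_j|\ge c'\mu^d\int_\Omega\phi_j(x)\,dx$ for all $x_j\in\Nhi$, with $c,c'$ depending only on $d,\sigma$. Here I would lean on the estimate already carried out for the initial iterate $u_h^0$: applying the discrete Alexandroff estimate \eqref{Alexandroff} to $b^i_h$ minus its supporting plane at $x_j$ over the star $\omega_j$ yields $|\partial b^i_h(x_j)|^{1/d}\ge C h^{-1}\sup_{\omega_j}(\cdots)^-\ge C\mu^{1/d}h$, hence $|\partial b^i_h(x_j)|\ge C\mu^d h^d\ge C\mu^d\int_\Omega\phi_j$. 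Choosing $\mu^d = E/C$, i.e.\ $\mu = (E/C)^{1/d}$, secures the inequality $|\partial b^i_h(x_j)|\ge E\int_\Omega\phi_j(x)\,dx$ for all interior nodes.

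Finally I would estimate $|b^i_h(x_i)|$. Since $x_i$ lies within distance $Rh$ of $\bar x_i$ and $\Omega\subset B_\rho(x_c)$ touches the sphere at $\bar x_i$, a direct computation with the paraboloid shows $0\ge q(x_i)\ge -C\rho^{-1}\mu\,\mathrm{dist}(x_i,\partial\Omega)\cdot\mathrm{diam}(\Omega)\ge -C\mu Rh$ — more precisely, $|x_i-x_c|^2-\rho^2 = (|x_i-x_c|-\rho)(|x_i-x_c|+\rho)$ and $|\;|x_i-x_c|-\rho\;|\le \mathrm{dist}(x_i,\partial\Omega)\le Rh$, while $|x_i-x_c|+\rho\le 2\rho+\mathrm{diam}(\Omega)$, both bounded in terms of $\Omega$. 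Since $b^i_h(x_i)=q(x_i)$, this yields $|b^i_h(x_i)|\le C\mu Rh = C R E^{1/d}h$ with $C=C(d,\Omega,\lambda,\Lambda)$, as claimed; the smallness of $h$ is only needed to guarantee that $Rh$ is below the scale at which the inner tangent ball and the star arguments are valid. The main obstacle I anticipate is the uniformity of the constant $c'$ in the subdifferential lower bound $|\partial b^i_h(x_j)|\ge c'\mu^d\int_\Omega\phi_j$ for nodes $x_j$ whose stars meet $\partial\Omega$, where translation invariance of $\Nhi$ breaks down; this is handled by the same shape-regularity/discrete-Alexandroff argument used for $u_h^0$, which does not require translation invariance, only quasi-uniformity and shape regularity of $\Nh$.
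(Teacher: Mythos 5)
Your construction is, at its core, the same as the paper's: a quadratic polynomial vanishing on a sphere that encloses $\dm$ and touches $\bdry$ at the boundary point nearest to $x_i$ (uniform convexity), the exact consistency identity of Lemma \ref{L:consistency} for its nodal function, and the tangency computation giving $|b^i_h(x_i)|\le CRE^{1/d}h$. Where you genuinely deviate is the treatment of interior nodes within distance $O(Rh)$ of $\bdry$. The paper has no near-boundary case at all: it extends $\Nhi$ to a full translation-invariant lattice $\mathcal{N}_h^+$ of $\mathbb{R}^d$, applies Lemma \ref{L:consistency} there (no boundary, so $|\partial p_h^+(x_j)|=E\int\phi_j$ at \emph{every} node), assigns to the boundary nodes the convex-envelope values $\Gamma(p_h^+)(x_j)$ so that interior subdifferentials are preserved (Lemma \ref{L:disc-subd}), and then restores the sign condition by subtracting the constant $3E^{1/d}rRh$, which is harmless in the final bound. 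You instead keep the natural values $q(x_j)\le 0$ at boundary nodes (so no shift is needed) and accept a constant loss near the boundary, absorbed into $\mu$. Be aware that, by Lemma \ref{monotonicity}, replacing the envelope values by the smaller values $q(x_j)$ can only shrink the subdifferentials at nearby interior nodes; this is exactly why Lemma \ref{L:consistency} cannot be quoted verbatim within $\sim Rh$ of $\bdry$ and why your separate estimate is needed.

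That separate estimate is the one step that does not close as written. Applying the discrete Alexandroff estimate on the star $\omega_j$, as in the construction of $u_h^0$, bounds from below the measure of the subdifferential of $b^i_h$ computed with respect to the nodes of $\omega_j$ only; that set \emph{contains} the subdifferential \eqref{subdifferential_node} with respect to all of $\Nh$, which is the object the lemma asserts and the one used afterwards through Lemma \ref{Msum} and the Brunn--Minkowski inequality. A lower bound for the star-localized subdifferential therefore does not yield the bound you need. The claim is nonetheless true and has a direct proof bypassing Alexandroff: since $D^2q=\mu I$, for every node $x_k\ne x_j$ one has $q(x_k)-q(x_j)-\gradv q(x_j)\cdot(x_k-x_j)=\frac{\mu}{2}|x_k-x_j|^2$, and shape regularity plus quasi-uniformity of $\T_h^0$ give the node separation $|x_k-x_j|\ge c(\sigma,\gamma)h$ (including boundary versus interior nodes); hence $\partial b^i_h(x_j)\supset B_{c\mu h/2}\bigl(\gradv q(x_j)\bigr)$, so $|\partial b^i_h(x_j)|\ge c'\mu^d h^d\ge c''\mu^d\int_{\dm}\phi_j$, after which your choice $\mu=(E/c'')^{1/d}$ and the estimate $|b^i_h(x_i)|\le C\mu Rh=CRE^{1/d}h$ go through with $C=C(d,\dm,\sigma,\gamma)$. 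With that repair (or by adopting the paper's lattice-extension device), your argument is a valid alternative proof.
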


\begin{proof}
We proceed in three steps. We denote $z = x_i$ for convenience.

{\bf Step 1.} We first construct a nodal function $p_h$ such that 
  \[
  \abs{\partial p_h (x_j)} \geq E \int_{\dm} \phi_j(x) dx
  \quad\forall \, x_j\in\Nhi.
  \]
Let $z_0 \in \bdry$ be such that $|z - z_0| = {\rm dist}(z, \bdry)$. 
We introduce a coordinate system with origin at $z_0$ and $z = (0,
\cdots, 0, |z - z_0|)$ and the domain $\dm$ lying within the sphere
$S_r$ given by
\[
  x_1^2 + x_2^2 + \ldots + x_{d-1}^2 + ( x_d - r )^2 \le r^2, 
\]
where the radius $r$ is a lower bound for the curvature of the
boundary $\partial \dm$ which is strictly positive.
Let $p(x)$ be the convex quadratic polynomial
\begin{align*}
  p(x) =  \frac{E^{1/d}}{2} \left \{ x_1^2 + x_2^2 + \ldots + x_{d-1}^2 + ( x_d - r )^2 - r^2 \right \},
\end{align*}
which is $\le 0$ in $\Omega$.
We consider a extension $\mathcal{N}_h^+$ of the nodal set $\Nhi$ to $\mathbb R^d$
\[
  \mathcal{N}_h^+ = \left\{ z = h \sum_{j=1}^d k_j e_j: k_j \in \mathbb Z \right\}
\]
where $\{e_j\}$ is the basis spanning the translation invariant set $\Nhi$. 
Let $p_h^+=N_h p$ be the nodal function associated with $p$ over $\mathcal{N}_h^+$,
namely $p_h^+(x_j) = p(x_j)$ for all $x_j \in \mathcal{N}_h^+$.
Since $\mathcal{N}_h^+$ is translation invariant,
Lemma \ref{L:consistency} (consistency of the discrete \MA measure) yields
\[
\abs{\partial p_h^+ (x_j)} = \int_{\omega_j} \phi_j(x) \det D^2 p(x) dx
= E \int_{\omega_j} \phi_j(x) dx 
  \quad \forall x_j \in \mathcal{N}_h^+,
\]
where $\omega_j = \textrm{supp} (\phi_j)$.
To define the nodal function $p_h$ on $\Nh$, we set 
\[
p_h(x_j) := p_h^+(x_j) \quad \forall x_j \in \Nhi.
\]
To define $p_h$ at boundary nodes $x_j \in \Nhb$, which may not
belong to $\mathcal{N}_h^+$, we regard the
convex envelope $\Gamma (p_h^+)$ of $p_h^+$ in $\mathbb{R}^d$ as a
natural extension and we assign $p_h(x_j) := \Gamma (p_h^+)(x_j)$ for
all $x_j \in \Nhb$. In view of Lemma \ref{L:disc-subd}
(discrete subdifferential), we realize
that $\partial p_h(x_j) = \partial \Gamma(p_h^+)(x_j)$ for all $x_j\in\Nhi$.

{\bf Step 2.}
We assert that
\[
p_h (x_j) \leq   C E^{1/d}  h 
\quad \forall x_j \in \Nh.
\]
Since $p_h(x_j) \leq 0$ for all $x_j \in \Nhi$, we only need to
show this for $x_j\in\partial\Omega$. For each such node,
due to the convexity of $p_h^+$, we have
\[
  p_h(x_j) = \Gamma (p_h^+)(x_j) \leq \max \{ p_h^+(x_k): x_k\in A_j(p_h^+) \},
\]
where $A_j(p_h^+)$ is the adjacent set of $x_j$ for $p_h^+$.
By Lemma \ref{estimate_adjacent_set} (size of adjacent sets),
$A_j(p_h^+)$ is
contained in a ball $B_j=B_{Rh}(x_j)$.
We thus deduce
\begin{align*}
  p_h(x_j) \leq \max \{ p_h^+(x_k): \, x_k \in \dm + B_{Rh}(0) \}.
\end{align*}
Since $ \Omega$ is contained in the ball $S_r$, we infer that
\begin{align*}
  p_h(x_j) \le  E^{1/d} \left\{ (r + R h)^2 - r^2 \right \}
  \leq 3 E^{1/d} r R h 
\end{align*}
for $h$ sufficiently small.

{\bf Step 3.}
Finally, if we set 
\[
b^i_h (x) := p_h (x) - 3 E^{1/d} r R h,
\]
then we have $b^i_h(x_j) \le 0$ for all $x_j\in\partial \Nhb$. Moreover, we have
\begin{align*}
  \abs{b^i_h(z)} = \abs{p(z) -  3 E^{1/d} r R h } 
  \leq C E^{1/d} r R h .
\end{align*}
This completes the proof.
\end{proof}

%
\subsection{Rates of Convergence for Classical Solutions}\label{S:rates-classical}
%

We prove rates of convergence for $C^2$ classical solutions of \eqref{pde},
assuming either H\"older or Sobolev regularity of $D^2 u$.
This is the content of Theorems \ref{T:Holder-regularity} and
\ref{T:Sobolev-regularity}.
\begin{theorem}[rate of convergence for $C^{2,\alpha}$ solutions]
\label{T:Holder-regularity}
Let $\dm$ be uniformly convex and $\Nhi$ be translation invariant.
Let $u$ be the convex solution of the \MA equation \eqref{pde} with 
$
\lambda I \leq D^2 u \leq \Lambda I
$
in $\Omega$ and $u_h$ be the solution of \eqref{FEM}
with right-hand sides $\{f_i\}_{i=1}^n$ defined
over a mesh $\T_h$ with nodes $\Nh$ and
translation invariant basis $\{\phi_i\}_{i=1}^n$.
If $f(x)\ge \lambda_F>0$ for all $x\in\Omega$ and
$
u \in C^{2, \alpha}(\overline{\dm}),
$
then 
\[
  \inftynorm{u - \Gamma(u_h)}  \leq C h^{\alpha}
\]
where the constant $C = C(d, \dm, \lambda, \Lambda, \lambda_F)
\big( |u|_{C^{2, \alpha}(\overline{\dm})} + |u|_{W^2_\infty(\Omega)} \big)$. 
\end{theorem}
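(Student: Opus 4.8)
The plan is to compare $u_h$ with the nodal function $N_h u$ associated with the exact solution $u$, using the stability estimate of Proposition \ref{stability} together with the interior consistency estimate of Proposition \ref{P:Holder-interior} for nodes far from $\partial\Omega$, and the discrete barrier of Lemma \ref{barrier} for nodes in the boundary layer. Since $\sup_\Omega |u-\Gamma(u_h)|$ is controlled by comparing nodal values (both $u$ and $\Gamma(u_h)$ are convex, and $\Gamma(u_h)$ interpolates $u_h$), it suffices to bound $\sup_{x_i\in\Nh}|N_h u(x_i) - u_h(x_i)|$ from above and below, and then to convert this nodal bound into an $L^\infty(\Omega)$ bound by a standard interpolation-error argument for the piecewise linear interpolant $\Gamma(u_h)$ of a $C^{2,\alpha}$ function (the interpolation error itself being $O(h^2)$, hence absorbed).

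The two estimates are symmetric, so I describe the bound on $(u_h - N_h u)^-$, i.e.\ I want to show $u_h \geq N_h u - Ch^\alpha$ at all nodes. First I would split the interior nodes into those with $\dist(x_i,\partial\Omega)\ge Rh$ and the boundary layer. On the former, Proposition \ref{P:Holder-interior} gives
\[
\Big| |\partial N_h u(x_i)| - \int_\Omega \phi_i \det D^2 u\,dx \Big| \le C h^\alpha |u|_{C^{2,\alpha}(\overline\Omega)} \int_\Omega\phi_i\,dx,
\]
and since $\int_\Omega\phi_i \det D^2 u\,dx = \int_\Omega\phi_i f\,dx = f_i = |\partial u_h(x_i)|$, we get $\big| |\partial N_h u(x_i)|^{1/d} - |\partial u_h(x_i)|^{1/d}\big| \le C h^\alpha (\int_\Omega\phi_i dx)^{1/d}$ after using $f\ge\lambda_F>0$ to divide through (the non-degeneracy is what lets us pass from the difference of the measures to the difference of their $d$-th roots, since $|\partial u_h(x_i)|=f_i\gtrsim \lambda_F h^d$). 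On the boundary layer I would instead add a small multiple of the barrier $b_h^i$ from Lemma \ref{barrier}: with $E$ chosen so that $|\partial(N_h u + b_h^i)(x_j)| \ge |\partial u_h(x_j)|$ at all interior nodes (feasible because adding subdifferentials only enlarges their measure, via Lemma \ref{Msum}, and $E$ need only be a fixed multiple of $\Lambda_F$ plus the consistency defect), the discrete maximum principle / stability forces $u_h \ge N_h u + b_h^i$ at $x_i$, and then $|b_h^i(x_i)|\le CREh\lesssim h$ gives the $O(h)$ control there — this is where the semi-norm $|u|_{W^2_\infty(\Omega)}$ enters. Assembling both contributions into the sum in Proposition \ref{stability} (the contact set contributes the interior consistency defect on far-from-boundary nodes and is handled by the barrier near the boundary), and using $\sum_i \int_\Omega\phi_i\,dx = |\Omega|$, yields $\sup(u_h - N_h u)^- \le C h^\alpha$; the reverse inequality is identical with the roles of $u_h$ and $N_h u$ exchanged (a barrier from below is likewise needed for the boundary layer).

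The main obstacle is the interface between the interior consistency regime and the boundary layer: Proposition \ref{P:Holder-interior} fails within distance $Rh$ of $\partial\Omega$, and there one only has the $O(1)$-per-node bound of Lemma \ref{estimate_bdry}, so the barrier argument must absorb the \emph{cumulative} defect of all boundary-layer nodes without losing a power of $h$. The point is that a single barrier function $b_h^i$ per boundary node controls $u_h(x_i)$ directly — we do not sum the $O(1)$ defects — because $b_h^i$ simultaneously dominates the subdifferential measure at \emph{every} interior node, so the stability estimate can be applied once with $N_h u + b_h^i$ in place of $N_h u$; the price is only $|b_h^i(x_i)|=O(h)$. Verifying that the constant $E$ in Lemma \ref{barrier} can indeed be chosen uniformly (independent of $i$ and $h$), namely $E \le C(\Lambda_F + |u|_{W^2_\infty}^d)$, so that the barrier's subdifferential dominates $f_i = |\partial u_h(x_i)|$ at all interior nodes including those where the interior consistency defect itself is only $O(h^\alpha)$, is the delicate bookkeeping step; once that is in place the rest is routine assembly.
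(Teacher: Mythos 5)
Your proposal is correct and follows essentially the same route as the paper: reduce to nodal differences via the $O(h^2)$ interpolation bound, control the $Rh$-boundary layer by the discrete barrier of Lemma \ref{barrier} combined with Lemma \ref{Msum}, the Brunn--Minkowski byproduct and the discrete maximum principle (using that Lemma \ref{estimate_bdry} holds at every interior node), and treat the remaining nodes with Proposition \ref{stability} plus Proposition \ref{P:Holder-interior}, the concavity of $t\mapsto t^{1/d}$ and $f\ge\lambda_F$, the boundary-layer bound serving as (shifted) boundary data for that application. The only cosmetic differences are that you argue the opposite sign first (adding the barrier to $N_h u$ rather than to $u_h$, with $E\lesssim \Lambda_F+|u|_{W^2_\infty}^d$ covering both cases) and that the paper makes the assembly explicit by applying stability on $\dm_h^0$ to $N_hu-u_h+C|u|_{W^2_\infty(\Omega)}h$.
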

\begin{proof}
Let $\T_h$ be the mesh with nodes $\Nh$ where we define the nodal
values $f_i$ according to \eqref{fi}, and let $N_h u$ be the nodal
function associated with $u$.
Lemma \ref{estimate_adjacent_set} (size of adjacent sets) gives
$A_i(N_h u) \subset B_{Rh}(x_i)$ for all $x_i \in \Nhi$.
Classical interpolation theory thus yields \cite{BrennerScott08}
\[
  \norm{u - \Gamma(N_h u)}_{L^{\infty}(\dm_h)} \leq C h^2 |u|_{W^2_\infty(\Omega)}.
\]
Therefore, we only need to prove that
$|( N_h u - u_h)(x_i)| \le C h^\alpha$ for all $x_i\in\Nhi$,
or equivalently the one-sided estimate
\begin{align}\label{thm:goal}
  \sup_{x_i \in \Nh} \big( N_h u - u_h \big)^-(x_i) \leq C h^{\alpha}
\end{align}
because a corresponding
inequality for $( N_h u - u_h)^+$ can be derived similarly.

{\bf Step 1} (boundary estimate). We first show that for all $x_i \in \Nhi$
such that ${\rm dist}(x_i, \bdry_h) \leq R h$ with
$R =\frac{\Lambda}{\lambda}\sigma^2$
\[
  \big(u_h - N_h u\big)(x_i) \leq C |u|_{W^2_\infty(B_i)} h.
\]
Fix $x_i\in\Nhi$ and let $b^i_h$ be the discrete barrier defined in Lemma \ref{barrier}
(discrete barrier) with free parameter $E$. We consider the nodal function $u_h + b^i_h$,
which satisfies
\[
\partial u_h (x_j) + \partial b^i_h (x_j) \subset  \partial (u_h + b^i_h) (x_j)
\]
due to Lemma \ref{Msum} (addition of subdifferentials). Applying
\eqref{BM-byproduct}, which is a consequence of
Lemma \ref{BM} (Brunn-Minkowski inequality), implies
\begin{align*}
      \abs {\partial (u_h + b^i_h) (x_j)} \geq \abs{\partial u_h (x_j)}
      + \abs{ \partial b^i_h (x_j)} .
\end{align*}
Since $\partial u_h(x_j) = f_j$ according to \eqref{FEM}, invoking Lemma
\ref{barrier} (discrete barrier) and Lemma \ref{estimate_bdry}
(boundary consistency) yields
\[
\abs {\partial (u_h + b^i_h) (x_j)} \geq
f_j + E \int_\Omega \phi_j(x)dx \ge |\partial N_h u(x_j)|
\quad\forall \, x_j\in\Nhi,
\]
provided $E\ge C|u|_{W^2_\infty(B_j)}^d$.
Moreover, $b^i_h(x_j) \le0$ and $u_h(x_j)=N_h u(x_j) = g(x_j)$ for all
$x_j\in\Nhb$ imply $u_h + b^i_h \leq N_h u$ on $\bdry_h$, whence
Corollary \ref{MP} (discrete maximum principle) gives
\[
u_h(x_j) + b^i_h(x_j) \leq N_h u (x_j) \quad \text{for all $x_j \in \Nhi$}.
\]
Finally, the estimate for $b^i_h(x_i)$ of Lemma \ref{barrier} (discrete barrier)
yields
\begin{align}\label{boundary-estimate}
u_h(x_i) - N_h u(x_i) \le - b^i_h(x_i) \le C |u|_{W^2_\infty(B_i)} h.
\end{align}

{\bf Step 2} (interior estimate).
We intend to apply Proposition \ref{stability} (continuous dependence)
to the nodal function
$N_h u - u_h$ for all nodes $x_i\in\Nhi$ with ${\rm dist}(x_i, \bdry)\geq Rh$,
for which we need to compare subdifferentials and verify boundary conditions.
To deal with the former, we restate
Proposition \ref{P:Holder-interior} (interior consistency) with the
help of \eqref{pde_a} and \eqref{FEM}, namely
$\int_\Omega \phi_i(x) \det D^2u(x) dx = f_i = |\partial u_h(x_i)|$:
\[
\abs{\partial N_h u(x_i)} \le |\partial u_h(x_i)| +
Ch^\alpha |u|_{C^{2,\alpha}(\overline{\Omega})} \int_\Omega \phi_i(x) dx.
\]
Setting $\epsilon := Ch^\alpha |u|_{C^{2,\alpha}(\overline{\Omega})}\int_\Omega \phi_i(x) dx$,
we readily see that 
\[
\abs{\partial N_h u(x_i)}^{1/d} - |\partial u_h(x_i)|^{1/d}
\le (f_i + \epsilon)^{1/d} - f_i^{1/d}
\]
for $x_i\in\mathcal{C}_h^-(N_h u-u_h)$.
Since the function $\psi(t)=t^{1/d}$ is concave for $t>0$, we deduce
that $\psi(t+\epsilon) - \psi(t) \le d^{-1} t^{1/d-1} \epsilon$,
whence
\[
\abs{\partial N_h u(x_i)}^{1/d} - |\partial u_h(x_i)|^{1/d}
\le C h^\alpha |u|_{C^{2,\alpha}(\overline{\Omega})} f_i^{1/d-1}\int_\Omega \phi_i(x) dx.
\]
Exploiting now the lower bound of $f$, namely $f(x)\ge \lambda_F>0$ for all
$x\in\Omega$, we estimate $f_i$ from below
\[
f_i = \int_\Omega f(x) \phi_i(x) dx \ge \lambda_F \int_\Omega \phi_i(x) dx,
\]
and insert this bound back into the preceding expression to obtain
\[
\Big(\abs{\partial N_h u(x_i)}^{1/d} - |\partial u_h(x_i)|^{1/d}\Big)^d
\le C \lambda_F^{1-d} h^{\alpha d}|u|_{C^{2,\alpha}(\overline{\Omega})}^d
\int_\Omega \phi_i(x) dx.
\]
In order to apply Proposition \ref{stability} (continuous dependence)
to the nodal function
$N_h u - u_h$ it remains to check boundary conditions on the smaller domain 
\[
  \dm_h^0 := \{x \in \dm: {\rm dist}(x, \bdry_h) \geq Rh \},
\]
where the above calculation is valid. Since  
$\big(u_h - N_h u\big)(x_i) \le C|u|_{W^2_\infty(B_i)}h$ for $x_i\in\Nhi\setminus\Omega_h^0$,
according to  \eqref{boundary-estimate}, Proposition \ref{stability} leads to
\[
\sup_{x_i\in\Omega_h^0\cap\Nhi} \big(N_h u-u_h+C|u|_{W^2_\infty(B_i)} h\big)^-(x_i)
\le C \lambda_F^{1/d-1} |\Omega|^{1/d} h^\alpha|u|_{C^{2,\alpha}(\overline{\Omega})},
\]
or equivalently to
\begin{equation}\label{interior-estimate}
\big(u_h-N_h u\big)(x_i) \le C |u|_{W^2_\infty(B_i)} h +
C \lambda_F^{1/d-1} |\Omega|^{1/d} h^\alpha|u|_{C^{2,\alpha}(\overline{\Omega})}
\end{equation}
for all $x_i\in\Omega_h^0\cap\Nhi$.
Combining \eqref{boundary-estimate} with \eqref{interior-estimate}
proves the desired estimate \eqref{thm:goal} and concludes the proof.
\end{proof}

\begin{remark}[non-degeneracy]\label{R:f-positive}
The lower bound $\lambda_F$ in the non-degeneracy assumption $f(x)\ge \lambda_F>0$ of Theorem
\ref{T:Holder-regularity} may be viewed as a stability constant
because $\lambda_F^{1/d-1}$ blows up as $\lambda_F\to0$. If $f$ vanishes somewhere,
then $\lambda_F=0$ and we have a reduced convergence rate $h^{\alpha/d}$
because $(f_i+\epsilon)^{1/d}-f_i^{1/d} \le \epsilon^{1/d}$ for all $f_i\ge0$.
\end{remark}  
\begin{remark}[$C^{2,\alpha}$-regularity]\label{R:regularity}
\rm
It is worth observing that the assumptions on $u$ in Theorem
\ref{T:Holder-regularity} can be verified from assumptions on
$f$. In fact, if $0 < \lambda_F \le f(x) \le \Lambda_F$, then $0
< \lambda \le D^2 u(x) \le \Lambda$ for some constants
$\lambda, \Lambda$ \cite{Caffarelli90b}, \cite[Section 4.1]{Gutierrez01}. Moreover,
if $f(x) \in C^{\alpha}(\overline{\dm})$ and $\Omega$ is of class
$C^{2, \alpha}$, then $u \in C^{2, \alpha}(\overline{\dm})$ \cite{Caffarelli90a}, \cite[Section 4.3]{Gutierrez01}.
\end{remark}
\begin{theorem}[rate of convergence for $W^s_q$ solutions]
\label{T:Sobolev-regularity}
Let $\dm$ be uniformly convex and $\Nhi$ be translation invariant.
Let $u$ be the convex solution of the \MA equation \eqref{pde} with 
$
\lambda I \leq D^2 u \leq \Lambda I
$
in $\Omega$ and $u_h$ be the solution of \eqref{FEM}
with right-hand side $\{f_i\}_{i=1}^n$ defined in \eqref{fi}
over a mesh $\T_h$ with nodes $\Nh$ and
translation invariant basis $\{\phi_i\}_{i=1}^n$.
If $f(x)\ge \lambda_F>0$ for all $x\in\Omega$ and
$
u \in W^s_q(\dm)
$
with $s>2+\frac{d}{q}$, $s\le3$, $d<q\le\infty$, then 
\[
  \inftynorm{u - \Gamma(u_h)}  \leq C h^{s-2}
\]
where the constant $C = C(d, \dm, \lambda, \Lambda, \lambda_F)
\big( |u|_{W^s_q(\dm)} + |u|_{W^2_\infty(\Omega)} \big)$. 
\end{theorem}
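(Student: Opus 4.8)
The plan is to follow the architecture of the proof of Theorem~\ref{T:Holder-regularity} almost verbatim, replacing the H\"older consistency bound of Proposition~\ref{P:Holder-interior} by its Sobolev counterpart and inserting one extra summation step to recover the full rate $h^{s-2}$ from the local rate $h^{s-2-d/q}$. First I would record the Sobolev embedding $W^s_q(\dm)\subset C^{2,\alpha}(\overline\dm)$ with $\alpha=s-2-d/q\in(0,1]$, legitimate because $q>d$ and $s\le 3$; this guarantees that $D^2u$ is continuous, that the hypothesis $\lambda I\le D^2u\le\Lambda I$ makes pointwise sense, and that $u\in W^2_\infty(\Omega)$. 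Then, exactly as in Theorem~\ref{T:Holder-regularity}, Lemma~\ref{estimate_adjacent_set} gives $A_i(N_hu)\subset B_{Rh}(x_i)$ for all $x_i\in\Nhi$, classical interpolation theory yields $\norm{u-\Gamma(N_hu)}_{L^\infty(\dm)}\le Ch^2|u|_{W^2_\infty(\Omega)}$, and it remains to prove the one-sided nodal estimate $\sup_{x_i\in\Nh}(N_hu-u_h)^-(x_i)\le Ch^{s-2}$, the bound on the positive part being symmetric.

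For the boundary layer $\{x_i\in\Nhi:\ \dist(x_i,\bdry)\le Rh\}$ I would reuse Step~1 of the proof of Theorem~\ref{T:Holder-regularity} unchanged: the discrete barrier $b^i_h$ of Lemma~\ref{barrier} with parameter $E\ge C|u|_{W^2_\infty(B_j)}^d$, combined with Lemma~\ref{Msum}, the consequence \eqref{BM-byproduct} of Brunn--Minkowski, the identity \eqref{FEM}, the boundary consistency bound of Lemma~\ref{estimate_bdry}, and the discrete maximum principle (Corollary~\ref{MP}), produces $u_h(x_i)-N_hu(x_i)\le -b^i_h(x_i)\le C|u|_{W^2_\infty(B_i)}h$. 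Since $s\le 3$ gives $s-2\le 1$, this contribution is already $O(h^{s-2})$.

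The interior estimate is where the Sobolev regularity enters. On $\dm_h^0=\{x\in\dm:\dist(x,\bdry)\ge Rh\}$ I would apply the second inequality of Proposition~\ref{P:Holder-interior} together with $\abs{\partial u_h(x_i)}=f_i$. Exactly as in Theorem~\ref{T:Holder-regularity}, concavity of $t\mapsto t^{1/d}$ on $t>0$ and the non-degeneracy lower bound $f_i\ge\lambda_F\int_\dm\phi_i$ convert this into the per-node estimate
\[
\Big(\abs{\partial N_hu(x_i)}^{1/d}-\abs{\partial u_h(x_i)}^{1/d}\Big)^d\le C\lambda_F^{1-d}\,h^{(s-2-d/q)d}\,|u|_{W^s_q(B_i)}^d\int_\dm\phi_i(x)\,dx
\]
for $x_i\in\mathcal{C}_h^-(N_hu-u_h)\cap\dm_h^0$. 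The new ingredient is the summation over $x_i$: since $\Nh$ is quasi-uniform and shape-regular, the balls $\{B_{Rh}(x_i)\}_{x_i\in\Nhi}$ have overlap bounded by a constant depending only on $d,\gamma,\sigma,R$, so $\sum_i|u|_{W^s_q(B_i)}^q\le C|u|_{W^s_q(\dm)}^q$; moreover $\int_\dm\phi_i\le Ch^d$ and the number of interior nodes is $\#\Nhi\le Ch^{-d}$. Applying H\"older's inequality with exponents $q/d$ and $q/(q-d)$ then gives $\sum_i|u|_{W^s_q(B_i)}^d\int_\dm\phi_i\le Ch^d\big(\sum_i|u|_{W^s_q(B_i)}^q\big)^{d/q}(\#\Nhi)^{1-d/q}\le C|u|_{W^s_q(\dm)}^d\,h^{d^2/q}$, and the exponent arithmetic $(s-2-d/q)d+d^2/q=(s-2)d$ makes the whole sum bounded by $C\lambda_F^{1-d}|u|_{W^s_q(\dm)}^d\,h^{(s-2)d}$. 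Feeding this into Proposition~\ref{stability} applied to $N_hu-u_h$ on $\dm_h^0$, with boundary values on $\partial\dm_h^0$ controlled exactly as by \eqref{boundary-estimate} in the proof of Theorem~\ref{T:Holder-regularity}, taking the $1/d$-th root and combining with the boundary-layer bound yields $\sup_{x_i\in\Nh}(N_hu-u_h)^-(x_i)\le C\big(|u|_{W^2_\infty(\Omega)}+\lambda_F^{1/d-1}|u|_{W^s_q(\dm)}\big)h^{s-2}$, which is the assertion.

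I expect the only genuine obstacle to be this summation step upgrading $h^{s-2-d/q}$ to $h^{s-2}$: one must check that for non-integer $s$ the Sobolev--Slobodeckij seminorm $|u|_{W^s_q}$ is subadditive up to a constant over a finitely overlapping cover, that the overlap constant of $\{B_{Rh}(x_i)\}$ is indeed $h$-independent under quasi-uniformity and shape-regularity, and that $\int_\dm\phi_i=\Theta(h^d)$ and $\#\Nhi=\Theta(h^{-d})$, so that the H\"older deficit is precisely $h^{d-d^2/q}$ and the exponents close. Everything else is a line-by-line transcription of the proof of Theorem~\ref{T:Holder-regularity}.
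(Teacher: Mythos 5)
Your proposal is correct and follows essentially the same route as the paper: reuse the barrier argument and Step 2 of Theorem \ref{T:Holder-regularity} with the Sobolev consistency bound of Proposition \ref{P:Holder-interior}, then upgrade $h^{s-2-d/q}$ to $h^{s-2}$ by H\"older's inequality with exponents $q/d$ and $q/(q-d)$ together with the finite overlap of the balls $B_i$. Your bookkeeping of the second H\"older factor via $\int_\dm\phi_i\le Ch^d$ and $\#\Nhi\le Ch^{-d}$ is an immaterial variant of the paper's bound $\bigl(\sum_i(\int_\dm\phi_i)^{q/(q-d)}\bigr)^{(q-d)/(dq)}\le C|\Omega|^{(q-d)/(dq)}h^{d/q}$, and the exponent arithmetic closes identically.
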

\begin{proof}
We just employ the second estimate of Proposition
\ref{P:Holder-interior} (interior consistency) in
Step 2 of the proof of Theorem
\ref{T:Holder-regularity} (rate of convergence for $C^{2,\alpha}$ solutions).
The key difference is the estimate of the sum
\[
S := h^{s-2-\frac{d}{q}} \left(\sum_{x_i\in\Nhi}
|u|_{W^s_q(B_i)}^d \int_\Omega \phi_i(x) dx \right)^{\frac{1}{d}}
\]
before applying Proposition \ref{stability} (continuous dependence);
recall that $B_i = B(x_i,Rh)$. To exploit
the $\ell^q$-summability of $\{|u|_{W^s_q(B_i)}\}_{x_i\in\Nhi}$
we utilize H\"older inequality with exponents $t=q/d$ and
$t^*=q/(q-d)$ to arrive at
\[
S \le  h^{s-2-\frac{d}{q}}
\left(\sum_{x_i\in\Nhi} |u|_{W^s_q(B_i)}^q\right)^{\frac{1}{q}} 
\left(\sum_{x_i\in\Nhi} \Big(\int_\Omega \phi_i\Big)^{\frac{q}{q-d}} \right)^{\frac{q-d}{dq}}.
\]
  We note that the balls $B_i$ have a finite overlapping property. In fact, if $B_i \cap B_j \neq \emptyset$, then $\dist(x_i, x_j)\leq 2Rh$ and the box 
\begin{equation}\label{omega-j}
\omega_j := \Big\{ x = x_j + \sum_{k=1}^d t e_k:  -\frac h2 \leq t \leq \frac h2 \Big\}
\end{equation}
is contained in $B_{Ch}(x_i)$ where $C = 2R+ \frac 12 d$. Since
$|B_{Ch}(x_i)| \leq Ch^d$ and $|\omega_j| \geq c h^d$, with $c=c(\sigma)$,
we deduce that balls $B_{Ch}(x_i)$ contain a fixed number
of nodes $x_j$ and $B_i$ overlaps with a fixed number of $B_j$'s.
Hence
\[
\left(\sum_{x_i\in\Nhi} \Big(\int_\Omega
\phi_i\Big)^{\frac{q}{q-d}} \right)^{\frac{q-d}{dq}} \le C
|\Omega|^{\frac{q-d}{dq}} h^{\frac{d}{q}},
\]
and we deduce that $S\le C h^{s-2} |u|_{W^s_q(\Omega)}$ to conclude the proof.
\end{proof}

We stress that Theorem 6.2 (rate of convergence for $W^s_q$ solutions)
provides a linear rate for solutions $u\in W^3_q(\Omega)$ with $q>d$,
which is much weaker than  the requirement $u\in W^3_\infty(\Omega)$ of Theorem
\ref{T:Holder-regularity} (rate of convergence for
$C^{2,\alpha}$ solutions) for a similar rate. We explore this further below.

\subsection{Rates of Convergence for Piecewise Smooth Solutions}
\label{S:rates-pwsomooth}

We now study the case where the consistency error may be large in a
small region, for instance when the Hessian $D^2u$ jumps across a
surface within $\Omega$, but is small otherwise. We therefore give up
the assumption $u\in C^2(\overline{\Omega})$.
We exploit the structure of the estimate in Proposition \ref{stability}
(continuous dependence),
namely the fact that its right-hand side accumulates in $\ell_d$.

Before stating our result, we introduce the Minkowski-Bouligand
dimension of a subset $\omega$ of $\dm$. Given a translation invariant set
of nodes $\Nh=\{x_i\}$, let $\{\omega_i\}_{x_i \in \Nh}$ be
the translation invariant partition covering $\dm$ defined in \eqref{omega-j}.
Let $m(h)$ be the number of $\omega_i$'s required to cover $\omega$.
We define the (Minkowski-Bouligand or box) dimension of $\omega$ to be
\[
  \dim \omega := - \lim_{h\to0}  \frac{\log m(h)}{\log h}.
\]
For example, it is easy to check that $\partial B_1$,
the discontinuity set of $D^2u$ in example \eqref{example2}, is of dimension one.
In addition, the solution $u$ satisfies
$u\in W^2_\infty(B_2(0)) \setminus C^2(\overline{B_2(0)})$
and $u \in C^3(\overline{B_1(0)}), C^3(\overline{B_2(0)}\setminus B_1(0))$.
The following theorem explores situations such as this one.

\begin{theorem}[convergence rates for piecewise smooth Hessians]\label{T:C11estimate}
Let $\Omega$ be uniformly convex and $\Nh$ be a translation invariant set
of nodes.
Let $u\in W^2_\infty(\Omega)$ be the convex solution of the \MA equation
\eqref{pde} and satisfy $\lambda I \leq D^2u \leq \Lambda I$
in $\Omega$. Let $D^2 u$ be piecewise smooth in the sense that
$D^2u \in W^s_q(\Omega \setminus \omega)$ with $s>2+d/q$, $s\le 3$,
$d<q\le\infty$, and let $\omega$ have box dimension $n < d$.
If $f\ge \lambda_F>0$ in $\Omega$ and $u_h$ is the solution of \eqref{FEM}
with right-hand side $\{f_i\}_{i=1}^n$ defined in \eqref{fi}
over a mesh $\T_h$ with nodes $\Nh$ and
translation invariant basis $\{\phi_i\}_{i=1}^n$,
then
\[
  \inftynorm{u - \Gamma(u_h)} \leq C h^{s-2} \, |u|_{W^s_q(\Omega \setminus \omega)}
  +C h^{\frac{d-n}{d}} \, |u|_{W^2_\infty(\Omega)},
\]
where the constant $C = C(d,\Omega,\omega,\lambda,\Lambda, \lambda_F)$.
\end{theorem}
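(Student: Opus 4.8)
The plan is to follow the two-part strategy of Theorems~\ref{T:Holder-regularity} and~\ref{T:Sobolev-regularity} (boundary layer handled by a discrete barrier, interior handled by stability plus consistency), the only genuinely new point being the small neighbourhood of $\omega$ where the consistency error cannot be made small. Since $\lambda I\le D^2u\le\Lambda I$ throughout, Lemma~\ref{estimate_adjacent_set} gives $A_i(N_hu)\subset B_i:=B_{Rh}(x_i)$ with $R=\tfrac{\Lambda}{\lambda}\sigma^2$, and the classical $W^2_\infty$-interpolation bound yields $\inftynorm{u-\Gamma(N_hu)}\le Ch^2|u|_{W^2_\infty(\dm)}$, which is dominated by $h^{(d-n)/d}|u|_{W^2_\infty(\dm)}$ because $0<\tfrac{d-n}{d}\le1$. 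Hence it suffices to bound the one-sided nodal error $\sup_{x_i\in\Nh}(N_hu-u_h)^-(x_i)$ (the bound for $(N_hu-u_h)^+$ being symmetric) by $Ch^{s-2}|u|_{W^s_q(\dm\setminus\omega)}+Ch^{(d-n)/d}|u|_{W^2_\infty(\dm)}$.

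For nodes $x_i\in\Nhi$ with $\dist(x_i,\bdry)\le Rh$ I would repeat Step~1 of Theorem~\ref{T:Holder-regularity}: combining the discrete barrier $b^i_h$ of Lemma~\ref{barrier} with the uniform choice $E=C|u|_{W^2_\infty(\dm)}^d$, the boundary consistency bound of Lemma~\ref{estimate_bdry}, Lemma~\ref{Msum}, \eqref{BM-byproduct} and Corollary~\ref{MP} gives $(u_h-N_hu)(x_i)\le -b^i_h(x_i)\le C|u|_{W^2_\infty(\dm)}h\le Ch^{(d-n)/d}|u|_{W^2_\infty(\dm)}$. For interior nodes, set $\dm_h^0:=\{x\in\dm:\dist(x,\bdry)\ge Rh\}$ and split $\Nhi\cap\dm_h^0$ into the \emph{good} nodes, those with $B_i\cap\omega=\emptyset$, and the remaining \emph{bad} nodes. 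On a good node, $B_i\subset\dm\setminus\omega$, so $u\in W^s_q(B_i)$ and Proposition~\ref{P:Holder-interior} applies; using $\det D^2u=f$ so that $\int_\dm\phi_i\det D^2u=f_i=|\partial u_h(x_i)|$ by \eqref{fi} and \eqref{FEM}, together with the concavity of $t\mapsto t^{1/d}$ and $f_i\ge\lambda_F\int_\dm\phi_i$ exactly as in Step~2 of Theorem~\ref{T:Holder-regularity}, one gets $\big(|\partial N_hu(x_i)|^{1/d}-|\partial u_h(x_i)|^{1/d}\big)^d\le C\lambda_F^{1-d}h^{(s-2-d/q)d}|u|_{W^s_q(B_i)}^d\int_\dm\phi_i$. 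On a bad node only Lemma~\ref{estimate_bdry} is available (it is valid for every node), giving the crude bound $\big(|\partial N_hu(x_i)|^{1/d}-|\partial u_h(x_i)|^{1/d}\big)^d\le C\lambda_F^{1-d}|u|_{W^2_\infty(\dm)}^{d^2}\int_\dm\phi_i$.

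Finally I would feed these into Proposition~\ref{stability} applied to $N_hu-u_h$ on $\dm_h^0$, using the shift by $C|u|_{W^2_\infty(\dm)}h$ from the boundary layer to meet the hypothesis on $\partial\dm_h^0$, just as in Theorem~\ref{T:Holder-regularity}. The sum over good nodes is handled by H\"older's inequality with exponents $q/d$ and $q/(q-d)$ together with the finite overlap of the balls $B_i$, precisely as in Theorem~\ref{T:Sobolev-regularity}; since those $B_i$ lie in $\dm\setminus\omega$ this contributes $\le C\lambda_F^{(1-d)/d}h^{s-2}|u|_{W^s_q(\dm\setminus\omega)}$. For the bad nodes I exploit the $\ell^d$-structure of the right-hand side of Proposition~\ref{stability}: a node is bad only if it lies within $Rh$ of $\omega$, hence its box $\omega_i$ of \eqref{omega-j} is among at most $C(d,R)m(h)$ boxes meeting the $\big(R+\tfrac{\sqrt d}{2}\big)h$-neighbourhood of $\omega$, and box dimension $n$ gives $m(h)\le Ch^{-n}$; since each $\int_\dm\phi_i\le Ch^d$, the bad contribution to the sum inside Proposition~\ref{stability} is $\le C\lambda_F^{1-d}|u|_{W^2_\infty(\dm)}^{d^2}h^{-n}h^d$, whose $d$-th root is $\le C\lambda_F^{(1-d)/d}|u|_{W^2_\infty(\dm)}^{d}h^{(d-n)/d}$. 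Adding the three contributions and absorbing the extra powers of $|u|_{W^2_\infty(\dm)}$ into $C$ via $|u|_{W^2_\infty(\dm)}\le C(d)\Lambda$ yields the claim. The main obstacle is this last counting step: one needs $\omega$ to satisfy the quantitative bound $m(h)\le Ch^{-n}$ (finite upper Minkowski content), not merely box dimension $n$ in the $\liminf$/$\limsup$ sense, and one must use the $\ell^d$-accumulation in Proposition~\ref{stability} so that an $O(1)$ consistency error on $O(h^{-n})$ nodes is amortised into an $O(h^{(d-n)/d})$ term rather than an $O(1)$ one; this is exactly what produces the exponent $\tfrac{d-n}{d}$.
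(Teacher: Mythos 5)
Your proposal is correct and follows essentially the same route as the paper's proof: the discrete barrier for the boundary layer, the splitting of interior nodes into those within $Rh$ of $\omega$ and the rest, Proposition \ref{P:Holder-interior} for the far nodes and Lemma \ref{estimate_bdry} for the near ones, the box-counting bound $m(h)\lesssim h^{-n}$, and the $\ell^d$ accumulation in Proposition \ref{stability}. The only cosmetic difference is at the nodes near $\omega$: the paper uses the crude bound $(f_i+\epsilon_i)^{1/d}-f_i^{1/d}\le\epsilon_i^{1/d}$, which produces exactly one power of $|u|_{W^2_\infty(\Omega)}$ in the $h^{(d-n)/d}$ term, whereas your concavity argument brings in the extra factor $\lambda_F^{(1-d)/d}|u|_{W^2_\infty(\Omega)}^{d-1}$ that you (legitimately) absorb into the constant via $|u|_{W^2_\infty(\Omega)}\le C(d)\Lambda$; your remark that one really needs the quantitative covering bound $m(h)\le Ch^{-n}$ is a fair reading of what the paper implicitly assumes.
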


\begin{proof}
We argue as in Theorems \ref{T:Holder-regularity} and
\ref{T:Sobolev-regularity}. Therefore, we first observe that $u\in
W^2_\infty(\Omega)$ guarantees
\[
\abs{ (N_h u - u_h)(x_i) } \leq C |u|_{W^2_\infty(\Omega)} h
\]
for all $x_i\in\Nhi$ such that dist $(x_i,\partial\Omega_h) < Rh$. We
split the nodes $x_i\in\Nhi$ such that dist $(x_i,\partial\Omega_h) \ge Rh$
into two sets, those that are at distance $Rh$ to $\omega$, denoted by
$\Nhi(\omega)$, and the complement $\Nhi(\Omega\setminus \omega)$.

In order to apply Proposition \ref{stability} (continuous dependence),
we start with the estimate
\[
|\partial N_h u(x_i)|^{1/d} - |\partial u_h(x_i)|^{1/d} \le
(f_i + \epsilon_i)^{1/d} - f_i^{1/d} \le d^{-1} \lambda_F^{1/d-1} \delta_i
\]
and $\delta_i=\epsilon_i \big(\int_\Omega \phi_i\big)^{1/d-1}$, in
conjunction with the expression of $\epsilon_i$ already
derived in the second estimate of
Proposition \ref{P:Holder-interior} (interior consistency),
to arrive at
\[
\delta_i = C h^{s-2-\frac{d}{q}} |u|_{W^s_q(B_i)} \Big(\int_\Omega \phi_i\Big)^{\frac{1}{d}}
\qquad\forall \, x_i\in\Nhi(\Omega\setminus \omega).
\]
We next resort to
the crude bound $(f_i + \epsilon_i)^{1/d} - f_i^{1/d} \le \epsilon_i^{1/d}$,
together with the expression of $\epsilon_i$ derived in Lemma
\ref{estimate_bdry} (boundary consistency),
to write
\[
\epsilon_i = C |u|_{W^2_\infty(B_i)} \Big(\int_\Omega \phi_i\Big)^{\frac{1}{d}}
\qquad\forall \, x_i \in \Nhi(\omega).
\]
We point out that Lemma \ref{estimate_bdry} is valid for any $x_i\in\Nhi$.
Arguing as in Theorem \ref{T:Sobolev-regularity} (rate of convergence
for $W^s_q$ solutions), we thus obtain
\[
\sum_{x_i\in \Nhi(\Omega\setminus \omega)} \delta_i^d \le C h^{(s-2)d} \,
|u|_{W^s_q(\Omega\setminus \omega)}^d,
\]
as well as
\[
\sum_{x_i\in\Nhi(\omega)} \epsilon_i^d
\le C |u|_{W^2_\infty(\Omega)}^d\sum_{x_i\in\Nhi(\omega)} |\omega_i|
\le C h^{d-n} \,|u|_{W^2_\infty(\Omega)}^d
\]
because $\sum_{x_i\in\Nhi(\omega)} |\omega_i| \le C m(h) h^d \le h^{d-n}$
with $n$ being the box dimension of $\omega$.
Applying now Proposition \ref{stability} (continuous dependence)
to $N_h(u)-u_h$ yields
\[
\sup \big(N_h(u)-u_h\big)^-  \le
C  h^{s-2} \, |u|_{W^s_q(\Omega\setminus \omega)}
+ C h^{\frac{d-n}{d}}\, |u|_{W^2_\infty(\Omega)}.
\]
This is the asserted estimate.
\end{proof}

We conclude with a simple application of Theorem \ref{T:C11estimate}
(convergence rates for piecewise smooth Hessians) to
the example \eqref{example2}. Since
$d = 2$, $s=3$, $q=\infty$, and $n = \dim(\partial B_1) = 1$, we deduce
\[
  \norm{u - \Gamma(u_h)}_{L^{\infty}(\dm_h)} \leq C(u) h^{1/2}.
\]
We point out that the singular set $\omega=\partial B_1$
need not be matched by either
the mesh $\T_h$ associated with the translation invariant nodal set $\Nhi$
or the mesh induced by the convex envelope $\Gamma(u_h)$ of $u_h$.

\bibliographystyle{plain}

\def\cprime{$'$}


\end{document}